\documentclass[11pt,onecolumn]{article}
\usepackage{times}
\usepackage{mathrsfs}
\usepackage[utf8]{inputenc} 
\usepackage[T1]{fontenc}
\usepackage{url}
\usepackage{ifthen}
\usepackage{cite}
\usepackage[cmex10]{amsmath}
\usepackage{graphicx,subfigure}
\usepackage{amssymb}
\usepackage{dsfont}
\usepackage{enumitem}
\usepackage{color}
\usepackage{epstopdf}
\usepackage[export]{adjustbox}
\usepackage[font=footnotesize]{caption}
\usepackage[utf8]{inputenc} 
\usepackage{hyperref}
\usepackage{comment}
\usepackage{placeins}

\ifpdf
    \graphicspath{{figures/PNG/}{figures/PDF/}{figures/}}
\else
    \graphicspath{{figures/EPS/}{figures/}}
\fi

\newtheorem{theorem}{Theorem}
\newtheorem{lemma}{Lemma}
\newtheorem{corollary}{Corollary}

\newtheorem{definition}{Definition}

\def \F {{\cal F}}

\newcommand{\dff}{\stackrel{\scriptscriptstyle\triangle}{=}}

\def \P{\mathbb{P}}

\title{\Large  
{\bf \textcolor{black}{Probability-Maximizing Change Detection: Finite-Window Optimality}}}
\date{}

\author{Ali Tajer\thanks{Electrical, Computer, and Systems Engineering Department, Rensselaer Polytechnic Institute, Troy, NY.} \and  Javad~Heydari\thanks{Cruise LLC, Seattle, WA.}}

\begin{document}

\maketitle

\begingroup
\renewcommand{\thefootnote}{}
\footnotetext{The results in this paper were partly presented in the 2019 IEEE International Symposium on Information Theory.}
\addtocounter{footnote}{-1}
\endgroup

\begin{abstract}
{\color{black}
This paper investigates probability-maximizing sequential change detection, a formulation in which performance is measured by the probability of stopping within an admissible interval of duration $\xi$ after a change rather than by the expected detection delay. Earlier work introduced this viewpoint in a Bayesian setting and subsequently formalized it under Lorden- and Pollak-type minimax criteria for the case in which successful detection must occur on the \textbf{first} post-change observation, i.e., $\xi=1$. This paper generalizes this framework in two directions. First, the decision maker is allowed to stop within a prescribed window of $\xi\in\mathbb{N}$ post-change observations. Second, the monitored process is allowed to experience multiple, non-overlapping transient change episodes with unknown onset times and durations, so that success consists of stopping within the admissible window associated with any one of these episodes. False alarms are controlled through an average run-length constraint. 

For exact finite-sample analysis, the paper introduces a survival-weighted average success criterion, which represents the probability of successfully detecting a randomly encountered change opportunity conditional on the detector being active at its onset. It is established that this criterion admits an exact representation as the expected \textbf{truncated} Shiryaev--Roberts (TSR) statistic at the stopping time normalized by the average run length, and it characterizes its exactly optimal stopping rule. The optimal procedure has finite memory and aggregates the likelihood-ratio evidence corresponding to all possible change onsets within the most recent window and compares the resulting TSR statistic with a \textbf{state-dependent} continuation boundary obtained from an optimal-stopping formulation. In the special case that the objective is to detect on the first post-change observation ($\xi=1$), the memory and state dependence disappear, and the TSR test reduces to the classical Shewhart rule, which is also exactly optimal under the Lorden- and Pollak-type minimax criteria. The paper also establishes structural properties of the associated value and continuation functions that support computation and interpretation of the optimal boundary. Furthermore, it is shown that a \textbf{constant}-boundary TSR procedure is asymptotically minimax optimal under both Pollak and Lorden criteria when the admissible window grows beyond the information-accumulation scale
$\frac{\log \epsilon}{{\sf D}_{\rm KL}(F_1\|F_0)}$, where $F_0$ and $F_1$ denote the nominal and post-change distributions, respectively, ${\sf D}_{\rm KL}(F_1\|F_0)$ is their Kullback--Leibler divergence, and $\epsilon$ denotes the required average run length to false alarm.
}
\end{abstract}


\section{Introduction}
{\color{black}
Classical quickest change detection is principally concerned with a single change that occurs at an unknown time and persists thereafter. Its standard formulations fall into two broad categories: Bayesian and minimax~\cite{Shiryaev1963,Lorden,Pollak}. In the Bayesian framework, the change-point is modeled as a random variable with a specified prior distribution, and the stopping rule is designed by averaging performance over both the change-point prior and the observation process~\cite{Shiryaev1963}. In the minimax framework, the change-point is treated as an unknown but nonrandom time, and performance is assessed under its least favorable occurrence. Lorden's criterion considers the worst conditional expected delay over all change times and all possible pre-change observation histories~\cite{Lorden}, whereas Pollak's criterion considers the worst conditional expected delay over all change times, conditioned on no alarm having occurred before the change~\cite{Pollak}. Although these formulations differ in how uncertainty about the change-point is represented and how worst-case performance is quantified, they share the objective of favoring an \emph{earlier post-change alarm over a later one,} subject to false-alarm control. They do not, however, directly encode the requirement that detection occur within a prescribed admissible window.

When detections outside that window are operationally equivalent to misses, it is natural instead to maximize the probability that the stopping time falls within the admissible post-change interval.
The \emph{probability-maximizing} framework for change-point detection was first introduced by Bojdecki~\cite{Bojdecki}. In a Bayesian setting, Bojdecki formulated an optimal-stopping problem in which success is defined by the stopping time lying in a prescribed neighborhood of a random change time. In the zero-tolerance case, success requires stopping at the first observation generated under the post-change distribution. This formulation treated premature or late stopping through the success event itself, rather than through a separate average-run-length constraint. Subsequent Bayesian and semi-Bayesian developments extended the probability-maximizing perspective to dependent observations and to composite post-change models~\cite{Sarnowski,Pollak-Krieger}.

Moustakides subsequently placed this probability-maximizing viewpoint in the standard false-alarm-constrained framework of sequential change detection~\cite{Moustakides:2014}. Specifically, probability-maximizing counterparts of the Shiryaev, Lorden, and Pollak criteria were formulated by crediting a decision when it occurs within a prescribed interval after the change. For the non-Bayesian Lorden- and Pollak-type formulations, the exact solution developed in~\cite{Moustakides:2014} concerns the one-sample case: conditioned on no earlier alarm, the procedure is rewarded only when it stops on the first post-change observation. The optimal rule is then the classical Shewhart test with a constant threshold selected to meet the average-run-length constraint. The same framework was further studied for independent non-identically distributed observations, multiple possible post-change distributions, and Markovian data~\cite{Moustakides:2014,Moustakides:2015}. This paper studies the probability-maximizing framework through two complementary performance perspectives: a survival-weighted average success criterion and Pollak- and Lorden-type worst-case criteria, and advances the underlying model in two directions.

\begin{enumerate}
\item \textbf{A multi-sample admissible detection window.} We credit a detection whenever it occurs within the first $\xi$ observations after a change onset, rather than only on the first post-change observation. Thus, $\xi=1$ is exactly the one-sample problem solved by Moustakides, whereas $\xi\geq 2$ is the nontrivial multi-sample extension addressed in this paper. All instants in this window are regarded as successful; stopping outside it is counted as a miss for that change onset.

\item \textbf{Multiple transient change opportunities.} We allow an unknown finite number of non-overlapping change episodes, with unknown onset times and durations. The detector operates once over the entire observation stream and seeks to stop in the admissible window associated with one of these episodes. The Pollak- and Lorden-type criteria aggregate the conditional success probabilities associated with the possible onsets and take the worst case over all admissible change configurations. Under the persistent-change convention specified in Section~\ref{sec:model}, the single persistent-change model is recovered by taking one episode of infinite duration. More generally, the behavior after the first $\xi$ post-change observations is immaterial to the success event as long as every episode lasts at least $\xi$ samples. The survival-weighted average criterion introduced in this paper evaluates one randomly encountered change opportunity, whereas the multiple-episode model is retained for the Pollak- and Lorden-type worst-case analysis.
\end{enumerate}
The extension from one admissible post-change observation to a window containing two or more observations fundamentally changes the structure of the exact average-optimal procedure. In the one-sample setting, the optimal rule is the classical Shewhart test, which bases each decision only on the current observation. With a longer admissible window, the exact average-optimal rule acquires finite memory and combines the evidence associated with all possible change onsets within the most recent observations. It can therefore be viewed as a finite-memory generalization of the Shewhart test. Furthermore, its underlying detection statistic is a truncated version of the classical Shiryaev--Roberts statistic~\cite{Shiryaev1963,Roberts1966}. Unlike the conventional Shiryaev--Roberts procedure, however, the proposed rule does not generally compare this statistic with a fixed threshold. Instead, its stopping boundary depends on the recent likelihood-ratio history, reflecting the value of retaining the accumulated evidence for future decisions. We refer to this state-dependent generalization as the \emph{truncated Shiryaev--Roberts} (TSR) test. When the admissible window contains only the first post-change observation, the memory and state dependence disappear, and the TSR test reduces to the classical Shewhart rule established in the one-sample probability-maximizing setting.

Our analysis establishes exact finite-sample optimality for a survival-weighted average success criterion and develops complementary guarantees under the Pollak- and Lorden-type worst-case criteria. The average criterion measures the probability of successful detection for a randomly encountered change opportunity, conditional on the detector still being active at its onset. It admits an exact representation as the ratio between the expected TSR evidence available at the stopping time and the expected time to stopping. We solve the resulting ratio optimization through an optimal-stopping formulation and obtain the state-dependent TSR rule as its exact optimizer. For a single change opportunity, the Pollak- and Lorden-type success probabilities are upper-bounded by this survival-weighted average criterion; for $\xi\geq2$, we do not claim that these worst-case bounds are attained at finite ARL.

For $\xi=2$, the stopping region admits a scalar state-dependent threshold representation; for general $\xi$, the sufficient state consists of the most recent $\xi-1$ likelihood ratios. We also characterize monotonicity and convexity properties of the continuation value and describe an offline--online implementation of the average-optimal rule. For $\xi=1$, the TSR rule reduces to Shewhart and remains exactly optimal under both the average and the Pollak- and Lorden-type minimax criteria.
Additionally, we identify a complementary regime in which the simpler constant-boundary TSR procedure is asymptotically minimax optimal. Specifically, when the admissible window grows beyond a specific rate, the state-dependent continuation boundary is not needed for first-order optimality.

The present manuscript revises and substantially extends our preliminary work in~\cite{C74}, which introduced the multiple-opportunity model, developed the state-dependent stopping structure in detail for $\xi=2$, and stated its general-window extension under the Pollak- and Lorden-type criteria. It also assumed a known common episode duration and stated finite-sample minimax attainment for $\xi\geq2$. The present manuscript makes a necessary distinction that was not developed in the conference version: the state-dependent TSR rule is exactly finite-sample optimal for the newly explicit survival-weighted average criterion $\mathcal A_\xi$, whose exact TSR identity is proved here, whereas for the Pollak- and Lorden-type criteria the finite-sample result at $\xi\geq2$ is an upper bound rather than an attainment claim. Furthermore, this work allows unknown, episode-specific durations subject only to $\delta_i\geq\xi$, establishes structural and computational properties of the continuation boundary, provides complete proofs, and proves growing-window asymptotic minimax optimality for the simpler constant-boundary TSR rule.

The present formulation is related to, but distinct from, several other strands of change-detection research.
Work on persistent changes with transient dynamics studies processes that pass through intermediate post-change regimes before settling permanently into a new state~\cite{GM-VV,GF-VV,VKM,VKM2}; in our model, each episode instead returns to the nominal distribution.
A separate literature studies finite-duration or temporary changes under a variety of formulations, including Bayesian quickest detection of transient changes, detection in multiple on--off processes, sampling-constrained transient detection, and likelihood-ratio, cumulative-sum, finite-moving-average, and window-limited procedures~\cite{premkumar2010bayesian,zhao2010quickest,Nikiforov,guepie2017dynamicprofile,C74,Ebi,egearoca2018fma,egearoca2022twostrategies,mana2022centralized,mana2023arbitrary,nikiforov2025unknownprofile}.
Related models include temporary changes in multivariate time series, intermittent changes of unknown duration, unknown appearance and disappearance times, and detection or estimation of multiple transient periods~\cite{watson2022temporary,sokolov2023intermittent,tartakovsky2021appearance,baron2023multiple}.
Our focus differs in the combination of an explicit probability-maximizing success window, an exactly optimized survival-weighted average criterion, Lorden- and Pollak-type minimax protection, an average-run-length false-alarm constraint, and asymptotic minimax guarantees for multiple transient opportunities.

The remainder of the paper is organized as follows. Section~\ref{sec:model} introduces the observation model and the average and worst-case probability-maximizing formulations. Section~\ref{sec:Xi2} develops the two-sample case $\xi=2$ and establishes exact average optimality, and Section~\ref{sec:Xi} extends the analysis to arbitrary $\xi\geq 2$. Section~\ref{sec:Xi1} treats the one-sample case and recovers the classical Shewhart rule under both the average and minimax criteria. Section~\ref{sec:comp} discusses structural properties and computational implementation. Section~\ref{sec:constant_tsr_asymptotics} develops worst-case minimax guarantees and establishes asymptotic minimax optimality of a constant-boundary TSR procedure in a joint ARL--window regime. Numerical results are presented in Section~\ref{sec:sim}, and concluding remarks are provided in Section~\ref{sec:conclusion}.
}

\section{\textcolor{black}{Observation} Model and \textcolor{black}{Probability-Maximizing Formulation}}
\label{sec:model}

\subsection{\textcolor{black}{Observation and Change-Episode Model}}

\textcolor{black}{Throughout, let $\mathbb{N}\dff\{1,2,\dots\}$ and $\mathbb{N}_0\dff\mathbb{N}\cup\{0\}$.} Consider the discrete-time observation sequence $\mathcal{X}\dff\{X_t:t\in\mathbb{N}\}$. Define \textcolor{black}{its natural filtration $\{\F_t:t\in\mathbb{N}_0\}$ by}
\begin{align}
\F_0\dff\{\emptyset,\Omega\}\ , \qquad
\F_t\dff\sigma\big(X_1,\dots,X_t\big),\quad t\in\mathbb{N}\ .
\end{align}
\textcolor{black}{For a fixed finite integer $s\geq1$, let}
\begin{align}\label{eq:change-points}
\mathcal{S}\dff\{\gamma_i:i\in\{1,\dots,s\}\}\subset\mathbb{N}\ ,\qquad 1 \leq \gamma_1<\cdots<\gamma_s\ ,
\end{align}
\textcolor{black}{denote} the set of \textcolor{black}{change-episode onset times. At each $\gamma_i$, the observation} distribution changes from the nominal cumulative distribution function (cdf) $F_0$ to a distinct \textcolor{black}{cdf} $F_1$. \textcolor{black}{Let} $\delta_i\in\mathbb{N}$ \textcolor{black}{denote} the duration of the \textcolor{black}{episode beginning} at $\gamma_i$. \textcolor{black}{The onset times and durations are unknown to the detector. For a prescribed admissible-window length $\xi\geq1$, we require only}
\begin{align}
\textcolor{black}{\delta_i\geq\xi\ ,\qquad i\in\{1,\dots,s\}\ ,}
\end{align}
{\color{black}so that the entire $\xi$-sample detection window following each onset is generated under $F_1$. The durations themselves need not be known; beyond the requirement $\delta_i\geq\xi$, no common episode-duration parameter enters the formulation.}
\textcolor{black}{The change episodes} do not overlap and are separated by at least one nominal \textcolor{black}{observation}; equivalently,
$\gamma_{i+1}\geq \gamma_i+\delta_i+1$ for all $i\in\{1,\dots,s-1\}$\textcolor{black}{. Otherwise}, two adjacent \textcolor{black}{episodes} can be merged and viewed as a single \textcolor{black}{episode}. Define the set of \textcolor{black}{affected time indices} as
\begin{align}
\mathcal{U}\dff \bigcup_{i=1}^s \{\gamma_i,\gamma_i+1,\dots,\gamma_i+\delta_i-1\}\subset\mathbb{N}\ .
\end{align}
\textcolor{black}{We also admit the single persistent-change convention $s=1$ and $\delta_1=\infty$, in which case $\mathcal{U}\dff\{t\in\mathbb{N}:t\geq\gamma_1\}$. Since the success event depends only on the first $\xi$ observations following the onset, all subsequent results apply under this convention without modification.}
\textcolor{black}{The observations follow} the dichotomous model
\begin{equation}\label{eq:H}
    \begin{array}{cl}
      X_t\sim F_0\;, & t\in\mathbb{N}\setminus \mathcal{U} \\
      X_t\sim F_1\;, & t\in\mathcal{U}
    \end{array}\ .
\end{equation}
\begin{figure}[t]
    \centering
    \includegraphics[width=0.65\linewidth]{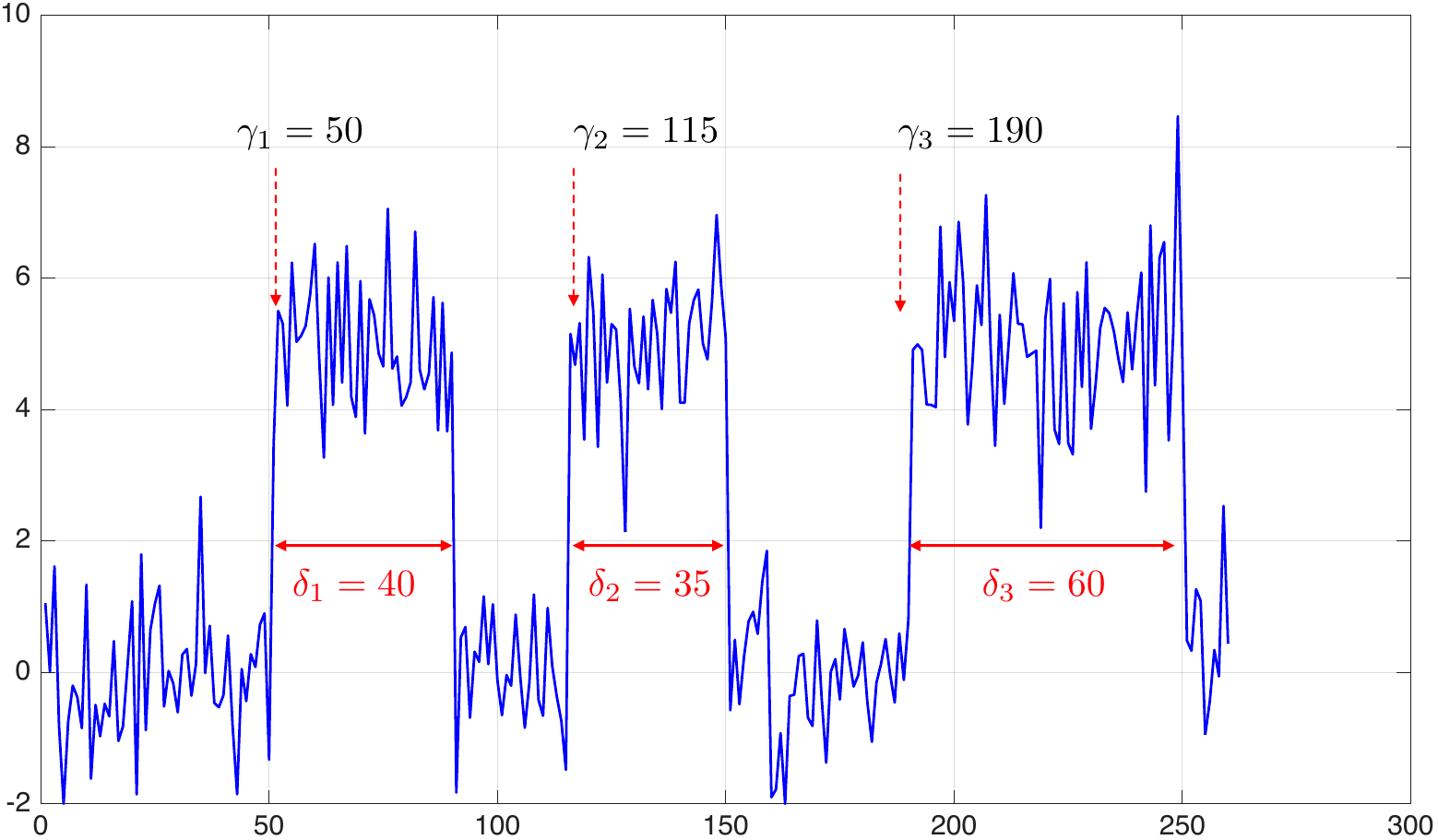}
    \caption{Illustration of \textcolor{black}{multiple non-overlapping change episodes}.}
    \label{fig:model}
\end{figure}
Figure~\ref{fig:model} illustrates a configuration with $s=3$ episodes beginning at $\gamma_1=50$, $\gamma_2=115$, and $\gamma_3=190$, with durations $\delta_1=40$, $\delta_2=35$, and $\delta_3=60$, respectively. Conditioned on the change configuration $(\mathcal{S},\{\delta_i:i\in\{1,\dots,s\}\})$, the observations are assumed independent and satisfy~\eqref{eq:H}. Let $\mu$ be a common dominating measure for $F_0$ and $F_1$, with densities $f_0$ and $f_1$, and assume $F_1\ll F_0$ so that the likelihood ratio $f_1/f_0$ is well defined. For each fixed $s$, let $\Theta_s(\xi)$ denote the class of configurations $\theta=(\mathcal{S},\{\delta_i\}_{i=1}^s)$ satisfying $\delta_i\geq\xi$ and $\gamma_{i+1}\geq\gamma_i+\delta_i+1$. When $s=1$, we augment $\Theta_1(\xi)$ with the persistent configuration $\delta_1=\infty$ under the convention specified above. We denote the probability measure and expectation induced by $\theta$ by $\mathbb{P}_\theta$ and $\mathbb{E}_\theta$. The no-change measure and expectation are denoted by $\mathbb{P}_\infty$ and $\mathbb{E}_\infty$. \textcolor{black}{Although $s$ indexes the performance class $\Theta_s(\xi)$, the onset times and durations are unknown, and the stopping rule derived below does not use them.}

\textcolor{black}{The assumptions above delimit the scope of the results. Knowledge of $F_0$ and $F_1$ permits exact evaluation of the one-step likelihood ratio $f_1(X_t)/f_0(X_t)$. In applications in which these distributions are estimated, a plug-in implementation does not generally retain the exact finite-sample guarantees unless model uncertainty is incorporated explicitly. Conditional independence permits the likelihood ratio for each candidate window to be factored into one-step likelihood ratios and yields a finite-dimensional, time-homogeneous Markov state under the no-change measure. For dependent observations, the probability-maximizing formulation remains applicable, but both the likelihood ratios and the sufficient state must account for the dependence structure, and the present analysis does not apply without modification. Finally, the condition $\delta_i\geq\xi$ ensures that every observation in an admissible window follows the post-change distribution; the duration $\delta_i$ remains unknown and is not used by the stopping rule.}

\subsection{\textcolor{black}{Admissible Detection Windows}}

\textcolor{black}{A sequential decision rule is an $\{\F_t\}_{t\in\mathbb{N}_0}$-stopping time $\tau$ taking values in $\mathbb{N}_0$. When a result explicitly permits randomization, an auxiliary random variable $B$, independent of the observations, is made available at time zero, and the stopping time is understood with respect to the enlarged filtration $\{\F_t\vee\sigma(B)\}_{t\in\mathbb{N}_0}$. For an episode beginning at $\gamma_i$, a decision is credited as successful when}
\begin{align}
\textcolor{black}{\gamma_i\leq\tau<\gamma_i+\xi\ .}
\end{align}
\textcolor{black}{Thus, the admissible window consists of the first $\xi$ observations generated under $F_1$ after that onset. All stopping instants within this window are treated equally. The stopping time itself is not truncated or forced to stop by $\gamma_i+\xi$; a decision outside the window is permitted but is not counted as a successful detection of that onset. This distinction separates the finite-window success criterion from a finite-horizon stopping problem. Hence, the formulation replaces the conventional ranking of post-change delays with a probability-maximizing objective: conditional on the procedure having survived until a change onset, it seeks to maximize the probability that the eventual stopping time lies in the associated admissible window. The behavior of an episode after its first $\xi$ post-change observations is immaterial to this success event. Consequently, the same formulation covers both finite-duration episodes with $\delta_i\geq\xi$ and persistent changes. In the next two subsections, we formalize minimax and average decision criteria.}

{\color{black} Note that the global stopping time remains an infinite-horizon random variable and may be arbitrarily large. The admissible window does not impose a deterministic deadline on $\tau$; it defines the $0$--$1$ event that is credited as a successful response to a particular onset. This objective is distinct from expected-delay optimization in which a delay criterion ranks earlier post-change stops above later ones and continues to assign value to sufficiently late stops, whereas the present criterion treats all stops within the admissible window equally and assigns no credit outside it for that onset. In the multiple-episode model, missing one admissible window does not terminate monitoring, and the procedure may still stop successfully in the window of a later episode.}

\subsection{\textcolor{black}{Pollak- and Lorden-Type Minimax Criteria}}

\textcolor{black}{By adopting the approach in~\cite{Moustakides:2014}, for each fixed $s$, we assess performance uniformly over the unknown configurations in $\Theta_s(\xi)$. The Pollak-type criterion is}
\begin{align}\label{eq:Pollak_criterion}
\mathcal{L}_{\rm P}(\tau)
\dff
\inf_{\theta\textcolor{black}{\in\Theta_s(\xi)}}
\sum_{i=1}^s
\mathbb{P}_\theta\!\left(
\gamma_i\leq\tau<\gamma_i+\xi
\,\middle|\,
\tau\geq\gamma_i
\right)\ .
\end{align}
\textcolor{black}{Each summand is the probability of stopping in the admissible window of episode $i$, conditioned on no alarm having occurred before its onset. Accordingly, the Pollak-type criterion averages over the pre-change histories compatible with survival to $\gamma_i$ and then takes the least favorable admissible configuration.}

\textcolor{black}{The Lorden-type criterion strengthens this protection by also taking the essential infimum over the information available immediately before each onset:}
\begin{align}\label{eq:Lorden_criterion}
\mathcal{L}_{\rm L}(\tau)
\dff
\inf_{\theta\textcolor{black}{\in\Theta_s(\xi)}}
\sum_{i=1}^s
\operatorname*{ess\,inf}_{\F_{\gamma_i-1}}
\mathbb{P}_\theta\!\left(
\gamma_i\leq\tau<\gamma_i+\xi
\,\middle|\,
\F_{\gamma_i-1},\tau\geq\gamma_i
\right)\ .
\end{align}
\textcolor{black}{Thus, both criteria treat the onset locations as unknown and nonrandom, while the Lorden-type criterion additionally protects against the least favorable pre-change observation history. Directly from the definitions,}
\begin{align}
\mathcal{L}_{\rm L}(\tau)\leq\mathcal{L}_{\rm P}(\tau)\ .
\end{align}
\textcolor{black}{The quantities in~\eqref{eq:Pollak_criterion} and~\eqref{eq:Lorden_criterion} are aggregate episode-wise success scores: each term is a conditional probability, and the sum evaluates the detector across the $s$ available opportunities. For fixed $s$, dividing either criterion by $s$ gives the corresponding average conditional success probability and does not change the optimizing stopping rule. We retain the unnormalized sum because it leads to cleaner expressions in the optimality analysis that follows.}

False alarms are controlled through the average run length (ARL) under the no-change measure, $\mathbb{E}_\infty[\tau]$. Let $\mathcal{T}$ denote the resulting class of stopping times taking values in $\mathbb{N}_0$, including the independently randomized extension described above when randomization is invoked. The Pollak-type optimization problem is
\begin{align}\label{eq:p1}
\mathcal{P}_{\rm P}(\varepsilon)
\dff
\sup_{\tau\in\mathcal{T}}\ \mathcal{L}_{\rm P}(\tau)
\quad\text{s.t.}\quad
\mathbb{E}_\infty[\tau]\geq\varepsilon\ ,
\end{align}
and the Lorden-type problem is
\begin{align}\label{eq:p2}
\mathcal{P}_{\rm L}(\varepsilon)
\dff
\sup_{\tau\in\mathcal{T}}\ \mathcal{L}_{\rm L}(\tau)
\quad\text{s.t.}\quad
\mathbb{E}_\infty[\tau]\geq\varepsilon\ ,
\end{align}
where $\varepsilon\geq1$ specifies the required ARL to false alarm.

\subsection{\textcolor{black}{Survival-weighted Average Success Criterion}}

{\color{black}
Recall that $\mathcal T$ denotes the class of stopping times, possibly randomized at time zero, taking values in $\mathbb N_0$. For each $\gamma\in\mathbb N$, define
$\mathbb P_\gamma^\xi$ as an auxiliary local-change measure on
$\mathcal F_{\gamma+\xi-1}$ that coincides with $\mathbb P_\infty$
on $\mathcal F_{\gamma-1}$ and under which
$X_\gamma,\ldots,X_{\gamma+\xi-1}$ are independent with density $f_1$.
Equivalently, for each $k\in\{1,\ldots,\xi\}$,
\begin{align}
\left.
\frac{d\mathbb P_\gamma^\xi}{d\mathbb P_\infty}
\right|_{\mathcal F_{\gamma+k-1}}
=
\prod_{j=0}^{k-1}\ell_{\gamma+j}\ .
\end{align}
No distributional specification beyond time $\gamma+\xi-1$ is needed, since the success event associated with onset $\gamma$ is measurable with respect to $\mathcal F_{\gamma+\xi-1}$. For a stopping time $\tau$ that has survived until $\gamma$, define the $\gamma$-specific conditional success probability
\begin{align}
 p_{\gamma}^{\xi}(\tau)
 \triangleq
 \mathbb P_{\gamma}^{\xi}\!\left(
 \gamma\leq\tau<\gamma+\xi
 \,\middle|\,
 \tau\geq\gamma
 \right)\ ,
 \label{eq:onset_success_probability}
\end{align}
with the convention $p_{\gamma}^{\xi}(\tau)=0$ whenever $\mathbb P_\infty(\tau\geq\gamma)=0$. For $0<\mathbb E_\infty[\tau]<\infty$, define the survival weights
\begin{align}
 w_\gamma(\tau)
 \triangleq
 \frac{\mathbb P_\infty(\tau\geq\gamma)}{\mathbb E_\infty[\tau]}\ ,
 \qquad \gamma\geq 1\ .
 \label{eq:survival_weights}
\end{align}
The weight $w_\gamma(\tau)$ has a natural operational interpretation. Its numerator is the probability, under the no-change law, that the detector remains active through time $\gamma$, and hence that a change opportunity occurring at $\gamma$ can still be acted upon. The normalization by $\mathbb E_\infty[\tau]$ makes these survival probabilities sum to one. Equivalently, consider repeated independent
monitoring cycles, with the detector restarted after each alarm, and select an active monitoring instant at random from the resulting
long-run operation. Then $w_\gamma(\tau)$ is the probability that the selected instant occupies position $\gamma$ within its monitoring cycle. Thus, change opportunities at times that the detector reaches more frequently receive proportionally greater weight. The tail-sum identity implies $w_\gamma(\tau)\geq0$ and $\sum_{\gamma\geq1}w_\gamma(\tau)=1$. Based on these, we define the \emph{survival-weighted average success probability}
\begin{align}
 \mathcal A_\xi(\tau)
 \triangleq
 \sum_{\gamma=1}^{\infty}
 w_\gamma(\tau)\,p_{\gamma}^{\xi}(\tau)\ .
 \label{eq:average_success_criterion}
\end{align}
Because it is a convex combination of conditional success probabilities, $0\leq\mathcal A_\xi(\tau)\leq1$. 
Note that under repeated operation with restart after each alarm, $w_\gamma(\tau)$ is the long-run fraction of active monitoring instants that occupy position $\gamma$ within a renewal cycle. Thus, $\mathcal A_\xi(\tau)$ is the average probability of successful detection per encountered change opportunity. 
The criterion $\mathcal A_\xi$ evaluates one randomly encountered opportunity. The general multiple-episode model is retained for the worst-case criteria below; without an explicit reset between episodes, we do not identify its finite-sample aggregate score with a simple multiple of $\mathcal A_\xi$. The exact finite-sample problem studied in this paper is
\begin{align}
 \mathcal P_{\rm A}(\varepsilon,\xi)
 \triangleq
 \sup_{\tau\in\mathcal T}
 \mathcal A_\xi(\tau)
 \quad\text{s.t.}\quad
 \mathbb E_\infty[\tau] \geq \varepsilon\ .
 \label{eq:average_optimization_problem}
\end{align}
}
{\color{black}
\subsection{Truncated Shiryaev--Roberts Statistic}

In this subsection, we formally define the
truncated Shiryaev--Roberts statistic.
We show that stopping rules induced by the TSR statistic enjoy two complementary optimality guarantees. Under the survival-weighted average success criterion, comparison with a state-dependent boundary yields exact finite-sample optimality. Under the Pollak- and Lorden-type probability-maximizing criteria, comparison with a constant boundary yields first-order asymptotic minimax optimality in the joint growing-ARL and growing-window regime specified in Section~\ref{sec:constant_tsr_asymptotics}.

\begin{definition}[{\color{black} TSR} Statistic]
Given the sequence of likelihood-ratio statistics
$\{\ell_t:t\in\mathbb{N}\}$, for a given window length
$\xi\in\mathbb{N}$, define the \emph{{\color{black} TSR} statistic} at time $t$ as
\begin{align}
W_t^{(\xi)}
\triangleq
\sum_{k=1}^{\xi}\prod_{j=0}^{k-1}\ell_{t-j}
=
\ell_t+\ell_t\ell_{t-1}
+\ell_t\ell_{t-1}\ell_{t-2}
+\cdots+
\prod_{j=0}^{\xi-1}\ell_{t-j}\ {\color{black}.}
\end{align}
This statistic is the window-limited, or truncated, version of the classical Shiryaev--Roberts statistic.
The corresponding \emph{TSR} test is the stopping rule
\begin{align}
\mathscr{T}(b_t)
\triangleq
\inf\{t\geq1:W_t^{(\xi)}\geq b_t\}\  ,
\end{align}
where $b_t$ is an $\F_t$-measurable boundary. In the optimal rule derived below, this boundary is state dependent rather than a fixed or deterministic time-varying threshold.
\end{definition}
To show the connection to the Shiryaev--Roberts statistics, re-indexing the statistic gives
\begin{align}
W_t^{(\xi)}=\sum_{j=t-\xi+1}^{t}\prod_{k=j}^{t}\ell_k\ ,
\end{align}
with the usual initialization convention near the beginning of the sequence. Hence, the statistic is exactly a window-limited, or truncated, version of the classical Shiryaev--Roberts statistic~\cite{Shiryaev1963,Roberts1966}. The distinction lies in the complete stopping rule: the classical Shiryaev--Roberts procedure uses a constant boundary, whereas exact optimality for the survival-weighted average probability-maximizing problem yields a state-dependent continuation boundary determined by the most recent $\xi-1$ likelihood ratios. This statistic is also distinct from a windowed CuSum statistic, which takes the maximum over the likelihood ratios associated with the candidate onset locations; the TSR statistic sums those likelihood ratios because a stop at time $t$ is successful under any one of the compatible onset locations.

When $s=1$ and the episode persists indefinitely, the model reduces to a single persistent change with an admissible post-change window. When $\xi=1$, success requires stopping on the first post-change observation, and the average-optimal rule reduces to Shewhart. In this one-sample case, the established Pollak- and Lorden-type probability-maximizing formulations also admit Shewhart as an exact minimax solution. For $\xi\geq 2$, recent samples can jointly contribute to a successful stop, leading to the finite-memory TSR structure characterized. A formulation that assigns a decreasing reward to later stopping times inside the admissible window is also possible, but it defines a different objective from the equal-credit probability-maximizing criterion studied here.
}

\section{\textcolor{black}{Probability-maximizing Detection: Admissible Window $\xi=2$}}
\label{sec:Xi2}

In this section, we characterize the \textcolor{black}{stopping rule that is exactly optimal for the survival-weighted average success criterion $\mathcal{A}_2$}. To simplify the exposition and highlight the main ideas, we begin with the special case $\xi=2$. In this setting, \textcolor{black}{a decision is credited as successful for onset $\gamma_i$ when the stopping time $\tau$ satisfies
$\gamma_i\leq\tau<\gamma_i+2$, i.e., when it stops on either the first or the second post-change observation. The stopping rule itself is not truncated and may continue if it does not stop within that window.
If this two-sample window is missed, monitoring continues; in the multiple-episode setting, the same rule may still stop successfully in the admissible window of a later episode.
The multiple-episode model remains relevant to the Pollak- and Lorden-type worst-case criteria studied later; the exact finite-sample criterion optimized in this section concerns one randomly encountered change opportunity, as formalized by $\mathcal{A}_2$.}
We first establish the core results for this case and then extend the analysis to arbitrary values of $\xi$ in Section~\ref{sec:Xi}. Finally, we treat the case $\xi=1$ separately, as it leads to a qualitatively different structure of the optimal rule.

{\color{black}{Throughout this section, the finite-sample optimization concerns the survival-weighted average criterion in~\eqref{eq:average_success_criterion}}. For the choice of $\xi=2$, we characterize the optimal decision rule, i.e., the solution to}
{\eqref{eq:average_optimization_problem}}, in three steps:
\begin{enumerate}
\item {we first derive} an upper bound for the \textcolor{black}{single-opportunity worst-case} objective functions {$\mathcal{L}_{\rm P}(\tau)$ and $\mathcal{L}_{\rm L}(\tau)$} \textcolor{black}{and identify the ratio appearing in that bound with the exact average criterion $\mathcal{A}_2(\tau)$};
\item subsequently, we characterize a decision rule that maximizes the \textcolor{black}{exact average criterion}; and
\item we show that the {stopping} rule of step $2$ \textcolor{black}{is exactly optimal for $\mathcal{A}_2$ under the ARL constraint.}
\end{enumerate}
Throughout the analysis, we denote the likelihood ratio of the sample
collected at time $t\in\mathbb{N}$ by $\ell_t$, i.e.,
\begin{align}
\ell_t
\triangleq
\frac{f_1(X_t)}{f_0(X_t)}\ ,
\label{eq:likelihood_ratio}
\end{align}
and adopt the convention $\ell_0=0$. {\color{black}{Before deriving the worst-case bounds, we establish a fundamental
identity that connects the survival-weighted average success criterion
to the TSR statistic. Although we state the identity for an arbitrary
window length $\xi$, in this section we use its specialization to
$\xi=2$.
\begin{lemma}[Average-success identity]\label{lemma:average_identity}
For every stopping time $\tau$ satisfying
$0<\mathbb E_\infty[\tau]<\infty$,
\begin{align}
 \mathcal A_\xi(\tau)
 =
 \frac{\mathbb E_\infty[W_\tau^{(\xi)}]}
 {\mathbb E_\infty[\tau]}\ .
 \label{eq:average_identity}
\end{align}
\end{lemma}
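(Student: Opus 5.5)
Multiplying the definition of $\mathcal A_\xi$ by $\mathbb E_\infty[\tau]$ cancels the survival weights against the conditioning, so the claim reduces to the unconditional identity
\begin{align}
\sum_{\gamma=1}^{\infty}\mathbb P_\gamma^\xi\big(\gamma\leq\tau<\gamma+\xi,\ \tau\geq\gamma\big)
=\mathbb E_\infty\big[W_\tau^{(\xi)}\big]\ .
\end{align}
This uses $w_\gamma(\tau)p_\gamma^\xi(\tau)\,\mathbb E_\infty[\tau]=\mathbb P_\infty(\tau\geq\gamma)\,p_\gamma^\xi(\tau)$. The event $\{\tau\geq\gamma\}$ lies in $\F_{\gamma-1}$ (or in $\F_{\gamma-1}\vee\sigma(B)$ under randomization), and $\mathbb P_\gamma^\xi$ agrees with $\mathbb P_\infty$ there. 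Hence $\mathbb P_\gamma^\xi(\tau\geq\gamma)=\mathbb P_\infty(\tau\geq\gamma)$, and the conditional probability times this survival probability is just the joint probability. Terms with $\mathbb P_\infty(\tau\geq\gamma)=0$ vanish on both sides, so the convention $p_\gamma^\xi=0$ causes no trouble. For a randomized $\tau$, $B$ is independent of the data under both measures, so the same computation holds after conditioning on $B$.

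Next I will change measure one onset at a time. Decompose the success event as the disjoint union $\bigcup_{k=1}^{\xi}\{\tau=\gamma+k-1\}$. Each piece lies in $\F_{\gamma+k-1}$ (up to $B$), so the local Radon--Nikodym derivative gives
\begin{align}
\mathbb P_\gamma^\xi(\tau=\gamma+k-1)
=\mathbb E_\infty\Big[\mathbf 1\{\tau=\gamma+k-1\}\prod_{j=0}^{k-1}\ell_{\gamma+j}\Big]\ .
\end{align}
Summing over $\gamma\geq1$ and $k\in\{1,\dots,\xi\}$, and exchanging sums with the expectation by Tonelli since all terms are nonnegative, I will reindex by $t=\gamma+k-1$. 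On $\{\tau=t\}$ the contribution is
\begin{align}
\sum_{k=1}^{\min(\xi,t)}\prod_{j=0}^{k-1}\ell_{t-j}\ ,
\end{align}
where the constraint $\gamma\geq1$ gives the upper limit $\min(\xi,t)$. By the convention $\ell_0=0$, and $\ell$ at nonpositive indices treated as zero, this equals $W_t^{(\xi)}$. Summing over $t\geq1$ yields $\mathbb E_\infty[W_\tau^{(\xi)}\mathbf 1\{\tau\geq1\}]$.

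The main obstacle is bookkeeping rather than analysis. I must check that the boundary convention makes the truncated sum coincide exactly with $W_t^{(\xi)}$ for $t<\xi$, which requires $\ell_{t-j}=0$ for $j\geq t$ so that every product reaching index $0$ vanishes. I must also handle $\tau=0$: no onset $\gamma\geq1$ can succeed there, so I set $W_0^{(\xi)}=0$, which is consistent with $\ell_0=0$. Finiteness of $\mathbb E_\infty[\tau]$ is needed only to make the weights well defined, and both sides are then finite since the left side is at most $\mathbb E_\infty[\tau]$. Dividing by $\mathbb E_\infty[\tau]$ completes the argument.
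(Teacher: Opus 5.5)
Your proposal is correct and follows the paper's proof essentially step for step. Both arguments identify $\mathbb P_\infty(\tau\geq\gamma)\,p_\gamma^\xi(\tau)$ with $\mathbb P_\gamma^\xi(\gamma\leq\tau<\gamma+\xi)$ using agreement on $\mathcal F_{\gamma-1}$, decompose by $\{\tau=\gamma+k-1\}$, change measure with the local likelihood-ratio products, apply Tonelli and reindex by $t=\gamma+k-1$, and conclude with the tail-sum identity. Your explicit handling of randomization, of $\{\tau=0\}$, and of the convention $\ell_u=0$ for $u\leq 0$ fills in details the paper leaves implicit.
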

\begin{proof}
See Appendix~\ref{app:average_identity}.
\end{proof}
For $\xi=2$, Lemma~\ref{lemma:average_identity} specializes to
\begin{align}
\mathcal A_2(\tau)
=
\frac{\mathbb E_\infty[
\ell_\tau+\ell_{\tau-1}\ell_\tau]}
{\mathbb E_\infty[\tau]}\ .
\label{eq:average_identity_xi2}
\end{align}}}
We now derive an upper bound on the single-opportunity performance
criteria $\mathcal L_{\rm P}(\tau)$ and
$\mathcal L_{\rm L}(\tau)$ for $\xi=2$.

\begin{theorem}[Upper Bound for $\xi=2$]\label{thm:upper_bound2}
For any stopping time $\tau {\in\mathcal{T}}$ satisfying {$0<\mathbb{E}_\infty[\tau] < \infty$}, and for any change set {$\mathcal{S}$} with $|{\mathcal{S}}| = s$, the performance criteria $\mathcal{L}_{\rm P}(\tau)$ and $\mathcal{L}_{\rm L}(\tau)$, defined in~\eqref{eq:Pollak_criterion} and~\eqref{eq:Lorden_criterion}, respectively, for $\xi=2$ satisfy
\begin{align}\label{eq:up:d}
\mathcal{L}_{\rm L}(\tau)
\leq \mathcal{L}_{\rm P}(\tau)
\leq s\cdot 
\frac{\mathbb{E}_\infty[\ell_\tau+\ell_{\tau-1}\ell_\tau]}
{\mathbb{E}_\infty[\tau]}
\textcolor{black}{\; =s\cdot \mathcal{A}_2(\tau)}\, .
\end{align}
\end{theorem}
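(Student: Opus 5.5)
The plan is to bound the infimum over configurations by a weighted average over a suitable one-parameter family of configurations. The weights will be the survival weights $w_\gamma(\tau)$ from~\eqref{eq:survival_weights}, and the resulting average will then be identified with $s\cdot\mathcal A_2(\tau)$ through Lemma~\ref{lemma:average_identity}. The first inequality $\mathcal L_{\rm L}(\tau)\leq\mathcal L_{\rm P}(\tau)$ is already noted after~\eqref{eq:Lorden_criterion}: an essential infimum of conditional probabilities is at most their conditional average given $\tau\geq\gamma_i$. So all the work is in the second inequality.

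\emph{Step 1: the single-onset change of measure.} Fix an onset $\gamma$ whose pre-onset segment $X_1,\dots,X_{\gamma-1}$ is nominal, and whose first two post-onset samples are drawn from $F_1$. The event $\{\gamma\leq\tau<\gamma+2\}=\{\tau=\gamma\}\cup\{\tau=\gamma+1\}$ is $\F_{\gamma+1}$-measurable, and $\{\tau\geq\gamma\}\in\F_{\gamma-1}$. Using the Radon--Nikodym derivative $\prod_{j=0}^{k-1}\ell_{\gamma+j}$ on $\F_{\gamma+k-1}$, I would write
\begin{align*}
\mathbb P_\gamma^2(\gamma\leq\tau<\gamma+2\,|\,\tau\geq\gamma)
=\frac{\mathbb E_\infty\big[\ell_\gamma\mathds 1_{\{\tau=\gamma\}}+\ell_\gamma\ell_{\gamma+1}\mathds 1_{\{\tau=\gamma+1\}}\big]}{\mathbb P_\infty(\tau\geq\gamma)}=p_\gamma^2(\tau).
\end{align*}
Multiplying by $w_\gamma(\tau)$ and summing over $\gamma$ rearranges, by Fubini and regrouping by the value of $\tau$, into $\mathbb E_\infty[\ell_\tau+\ell_{\tau-1}\ell_\tau]/\mathbb E_\infty[\tau]$. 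This is precisely~\eqref{eq:average_identity_xi2}, which I take from Lemma~\ref{lemma:average_identity}. The convention $\ell_0=0$ handles $\tau=1$.

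\emph{Step 2: averaging over configurations.} For each summand index $i$, I would build a family $\theta^{(i)}_\gamma\in\Theta_s(2)$ whose $i$-th onset is at $\gamma$ and whose law on $\F_{\gamma+1}$ coincides with $\mathbb P_\gamma^2$. This requires the pre-$\gamma$ segment to be nominal. Then, since a single configuration must serve all $i$ at once, I would use the elementary bound
\begin{align*}
\inf_\theta\sum_{i=1}^s a_i(\theta)\;\leq\;\sum_\gamma w_\gamma(\tau)\sum_{i=1}^s a_i(\theta_\gamma),
\end{align*}
valid because the $w_\gamma(\tau)$ form a probability vector. The aim is to bound each $i$-th contribution by $\sum_\gamma w_\gamma(\tau)p^2_\gamma(\tau)=\mathcal A_2(\tau)$, which yields the factor $s$.

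\emph{Main obstacle.} For $i\geq2$ the episodes $1,\dots,i-1$ precede $\gamma_i$, so the pre-onset law is not $\mathbb P_\infty$, and the $i$-th summand is not literally $p_{\gamma_i}^2(\tau)$. I would first try placing the earlier episodes at fixed positions $\gamma_1<\dots<\gamma_{i-1}$ and sending the remaining onset(s) to infinity. The goal is to show that the relevant conditional success probabilities, averaged with survival weights computed under the shifted (still finite-memory) law, are still bounded by the same TSR ratio. The underlying reason is that the change-of-measure identity in Step 1 only uses the post-onset likelihood ratio and the $\F_{\gamma-1}$-measurability of $\{\tau\geq\gamma\}$. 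If exact equality fails, the fallback is a limiting argument: let the gaps between episodes grow without bound, and use $\mathbb E_\infty[\tau]<\infty$, hence $\mathbb P_\infty(\tau\geq\gamma)\to0$ summably, to show that the contamination from earlier episodes in the weighted sum vanishes. Each summand's contribution then tends to at most $\mathcal A_2(\tau)$. In the single-opportunity case $s=1$, which is the case the section primarily targets, this obstacle is absent, and Steps 1--2 give the bound directly.
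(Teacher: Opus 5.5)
Your treatment of $s=1$ is correct and matches the paper's argument up to bookkeeping. Bounding the infimum over onsets by the $w_\gamma(\tau)$-weighted average of $p^2_\gamma(\tau)$ and invoking Lemma~\ref{lemma:average_identity} is the same as the appendix's ``fix $\gamma$, drop the infimum, multiply by $\mathbb{P}_\infty(\tau\geq\gamma)$, change measure, and sum over $\gamma$ with the tail-sum identity.'' Getting the Lorden bound from $\mathcal{L}_{\rm L}\leq\mathcal{L}_{\rm P}$ also makes the paper's separate Lorden step redundant. The gap is $s\geq2$: you never show that the $i$-th summand contributes at most $\mathcal{A}_2(\tau)$, and neither proposed repair can do it. Averaging over $\gamma_i$ with survival weights computed under the law that contains episodes $1,\dots,i-1$ gives the TSR ratio of the rule restarted from a different internal state, and that ratio need not equal $\mathcal{A}_2(\tau)$. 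Pulling the episodes apart does not wash out their influence either, because $\mathbb{P}_\theta(\,\cdot\mid\tau\geq\gamma_i)$ is a ratio of two vanishing quantities and $\tau$ can keep a permanent memory of an earlier episode.

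In fact the Pollak inequality is false for $s=2$, so the obstacle you flagged is fatal rather than technical. Here is a counterexample.
\begin{itemize}
\item \emph{Model and rule.} Let $F_0(E)=\eta$ and $F_1=F_0(\cdot\mid E)$, so $\ell_t=\eta^{-1}\mathbf{1}_E(X_t)$. Let $\sigma$ be the first $t\geq2$ with $X_{t-1},X_t\in E$, and let $\tau$ be the first $u>\sigma$ with $X_{u-1}\notin E$ and $X_u\in E$.
\item \emph{Average criterion.} Under $\mathbb{P}_\infty$, $\tau$ has finite mean and $W^{(2)}_\tau=\eta^{-1}$. Moreover $\mathbb{E}_\infty[\tau]>\mathbb{E}_\infty[\sigma]=\eta^{-1}+\eta^{-2}$, so $\mathcal{A}_2(\tau)<\eta$.
\item \emph{Second episode.} Take any $\theta\in\Theta_2(2)$. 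Episode~1 forces $\sigma\leq\gamma_1+1$. The rule never stops at a time $u$ with $X_{u-1}\in E$. Given $\{\tau\geq\gamma_2\}$, the nominal gap of length $g\geq1$ consists of $k$ observations in $E$ followed by $g-k$ in $E^{c}$, for some $0\leq k\leq g$.
\item \emph{Conditional success.} The rule therefore stops at $\gamma_2$ unless $k=g$. That event has conditional probability $\eta^g/\sum_{k=0}^{g}\eta^k(1-\eta)^{g-k}\leq\eta$. Every conditioning event here has positive probability.
\item \emph{Conclusion.} The second summand is at least $1-\eta$, so $\mathcal{L}_{\rm P}(\tau)\geq1-\eta>2\eta>2\mathcal{A}_2(\tau)$ whenever $\eta<1/3$.
\end{itemize}
The paper's extension step in Appendix~\ref{app:thm:upper_bound2} applies the $s=1$ bound ``to each summand.'' That is valid only for $i=1$, for exactly the reason you identified. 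What can actually be proved, by your route or the paper's, is the single-opportunity case $s=1$.
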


\begin{proof}
See Appendix~\ref{app:thm:upper_bound2} \textcolor{black}{for the worst-case inequalities. The final identity follows from Lemma~\ref{lemma:average_identity}.}
\end{proof}
{\color{black}The bound in Theorem~\ref{thm:upper_bound2} also has an information-theoretic converse interpretation. It applies to every admissible stopping rule and is independent of any particular detector construction; after imposing the ARL constraint, it therefore specifies a universal upper performance limit for the probability of successful finite-window detection. Furthermore, by Lemma~\ref{lemma:average_identity}, the upper bound is exactly the survival-weighted average success probability $\mathcal{A}_2(\tau)$. The remainder of the analysis constructs the TSR stopping rule and proves that it exactly maximizes this average criterion. We do not assert that the single-opportunity Pollak- or Lorden-type upper bound in~\eqref{eq:up:d} is attained at finite $\varepsilon$ for $\xi=2$. First, note that according to Lemma~\ref{lemma:average_identity} we have
\begin{align}\label{eq:A2_ratio}
\mathcal{A}_2(\tau)
=
\frac{\mathbb{E}_\infty[\ell_\tau+\ell_{\tau-1}\ell_\tau]}
{\mathbb{E}_\infty[\tau]}\ .
\end{align}}
In the next step, we seek the stopping time $\tau$ that maximizes the exact average criterion in~\eqref{eq:A2_ratio} subject to the false alarm constraint specified in~\eqref{eq:average_optimization_problem}. This leads to the optimization problem
\begin{align}\label{eq:q}
{\mathcal{Q}(\varepsilon)\dff}
\sup_{{\tau\in\mathcal{T}}} \; & \displaystyle 
\frac{{\mathbb{E}_\infty[\ell_\tau+\ell_{\tau-1}\ell_\tau]}} 
{{\mathbb{E}_\infty[\tau]}} \quad 
\text{s.t.} \quad  \mathbb{E}_\infty[\tau] \geq \varepsilon\ .
\end{align}
By~\eqref{eq:A2_ratio}, $\mathcal{Q}(\varepsilon)$ is precisely the optimal value of the survival-weighted average problem $\mathcal{P}_{\rm A}(\varepsilon,2)$ in~\eqref{eq:average_optimization_problem}.
To solve ${\mathcal{Q}(\varepsilon)}$, we next show that the supremum over the inequality constraint can equivalently be achieved over stopping times satisfying the equality constraint. Specifically, for any feasible\footnote{A stopping time $\nu$ is called feasible if ${\mathbb{E}_\infty[\nu]\geq\varepsilon}$ and ${\mathbb{E}_\infty[\nu]<\infty}$.} stopping time $\nu$ with associated ratio
\begin{align}
\frac{{\mathbb{E}_\infty[\ell_\nu+\ell_{\nu-1}\ell_\nu]}}
{{\mathbb{E}_\infty[\nu]}}\ ,
\end{align}
there exists a (possibly randomized) stopping time $\nu'$ satisfying
\begin{align}
{\mathbb{E}_\infty[\nu']=\varepsilon}\ ,
\end{align}
and
\begin{align}
\frac{{\mathbb{E}_\infty[\ell_{\nu'}+\ell_{\nu'-1}\ell_{\nu'}]}}
{{\mathbb{E}_\infty[\nu']}}
= \frac{{\mathbb{E}_\infty[\ell_\nu+\ell_{\nu-1}\ell_\nu]}}
{{\mathbb{E}_\infty[\nu]}} \ .
\end{align}
This reduction is formalized in the following lemma, which incorporates the ARL to false alarm as an equality constraint. 
\begin{lemma}[Reducing the ARL constraint to equality]\label{lemma:eq}
Fix $\varepsilon \geq 1$. For any feasible stopping time $\nu$ satisfying ${\mathbb{E}_\infty[\nu] \geq \varepsilon}$, there exists a (possibly randomized) stopping time $\nu'$ such that
\begin{enumerate}
    \item ${\mathbb{E}_\infty[\nu'] = \varepsilon}$, i.e., the false alarm constraint is satisfied with equality; and
    \item the performance ratio is preserved, namely
    \begin{align}\label{eq:ratio}
        \frac{{\mathbb{E}_\infty[\ell_{\nu'} + \ell_{\nu'-1}\ell_{\nu'}]}}
        {{\mathbb{E}_\infty[\nu']}}
        =
        \frac{{\mathbb{E}_\infty[\ell_\nu + \ell_{\nu-1}\ell_\nu]}}
        {{\mathbb{E}_\infty[\nu]}} \ .
    \end{align}
\end{enumerate}
\end{lemma}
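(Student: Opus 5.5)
The idea is to make $\nu'$ a time-zero randomization between the feasible rule $\nu$ and a rule that never raises an alarm on its own. The standard trick from~\cite{Moustakides:2014} uses independent geometric restarts; here it can be done more simply. Since $\nu$ is feasible, $\varepsilon\le\mathbb{E}_\infty[\nu]<\infty$. If $\mathbb{E}_\infty[\nu]=\varepsilon$, take $\nu'=\nu$.

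Otherwise let $B$ be a Bernoulli variable, independent of the observations, with success probability $p\in(0,1]$, and define
\begin{align*}
\nu'=\nu\,\mathds{1}\{B=1\}+0\cdot\mathds{1}\{B=0\}.
\end{align*}
Stopping at time $0$ is admissible because $\mathcal{T}$ contains stopping times with values in $\mathbb{N}_0$. Since $\ell_0=0$, we have $\ell_{0}+\ell_{-1}\ell_0=0$, with the convention $\ell_{-1}\ell_0=0$. Then
\begin{align*}
\mathbb{E}_\infty[\nu']=p\,\mathbb{E}_\infty[\nu],\qquad
\mathbb{E}_\infty[\ell_{\nu'}+\ell_{\nu'-1}\ell_{\nu'}]=p\,\mathbb{E}_\infty[\ell_\nu+\ell_{\nu-1}\ell_\nu].
\end{align*}
Both identities follow from independence of $B$ and conditioning on $B$. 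Choosing $p=\varepsilon/\mathbb{E}_\infty[\nu]\in(0,1)$ gives $\mathbb{E}_\infty[\nu']=\varepsilon$, and the ratio is unchanged because $p$ cancels. It remains to check that $\nu'$ is a stopping time for the enlarged filtration $\{\F_t\vee\sigma(B)\}$: we have $\{\nu'=0\}=\{B=0\}\cup(\{B=1\}\cap\{\nu=0\})\in\sigma(B)$, and for $t\ge1$, $\{\nu'=t\}=\{B=1\}\cap\{\nu=t\}$.

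\textbf{Main obstacle.} The only delicate point is whether stopping at $0$ is acceptable and what value the reward takes at $\tau=0$. The convention $\ell_0=0$ was introduced precisely for this, but the product $\ell_{\nu'-1}\ell_{\nu'}$ at $\nu'=0$ involves $\ell_{-1}$. I would handle this by noting that the reward $W^{(2)}_0$ is taken to be $0$. This is consistent with Lemma~\ref{lemma:average_identity}, since a stop at time $0$ can never lie in any admissible window $[\gamma,\gamma+2)$ with $\gamma\ge1$.

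If one prefers to avoid $\nu'=0$, a fallback is to mix $\nu$ with the rule that stops deterministically at $t=1$, which has ARL equal to $1\le\varepsilon$. The ratio is then a mediant of two ratios and is not preserved exactly. So I would commit to the time-zero construction, which gives exact equality of the ratios as required in~\eqref{eq:ratio}.
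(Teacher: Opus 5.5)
Your proposal is correct and matches the paper's proof: the paper also mixes $\nu$ at time zero with the rule that stops at $\tau=0$ via an independent Bernoulli variable (with the labels of $B$ swapped, i.e., it stops at $0$ with probability $1-\varepsilon/\mathbb{E}_\infty[\nu]$), and it uses the convention $W_0=0$ so that the numerator and the denominator both scale by $\varepsilon/\mathbb{E}_\infty[\nu]$. Your explicit check that $\nu'$ is a stopping time for $\{\F_t\vee\sigma(B)\}$ adds a step that the paper only remarks on informally.
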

\begin{proof}
See Appendix~\ref{app:lemma:eq}.
\end{proof}
Therefore, in solving ${\mathcal{Q}(\varepsilon)}$, the inequality constraint can be replaced by an equality constraint without loss of optimality. Consequently, it suffices to consider the equivalent problem
\begin{align}\label{eq:qbar}
{\widetilde{\mathcal{Q}}(\varepsilon) \dff}
\sup_{{\tau\in\mathcal{T}}} & \displaystyle 
\frac{{\mathbb{E}_\infty[\ell_\tau+\ell_{\tau-1}\ell_\tau]}}
{{\mathbb{E}_\infty[\tau]}} \quad 
\text{s.t.} \quad {\mathbb{E}_\infty[\tau] = \varepsilon}\ .
\end{align}
The equality-constrained problem in~\eqref{eq:qbar} can, in turn, be reformulated as the following unconstrained Lagrangian problem:
\begin{align}\label{eq:r}
{\mathcal{V}(\lambda) \dff}
\sup_{{\tau\in\mathcal{T}}}
{\mathbb{E}_\infty \big[\ell_\tau+\ell_{\tau-1}\ell_\tau - \lambda \tau\big]}\ ,
\end{align}
where $\lambda > 0$ is a Lagrange multiplier chosen so that the resulting optimizer satisfies {$\mathbb{E}_\infty[\tau] = \varepsilon$}. This equivalence is established formally in the following lemma.

\begin{lemma}[Lagrangian equivalence of the ARL-equality problem]\label{lemma:lag}
{Fix $\varepsilon\ge 1$. For $\mathcal{V}(\lambda)$ defined in~\eqref{eq:r}, there exists $\lambda_\varepsilon>0$ and a (possibly randomized) stopping time $\tau^\star\in\mathcal{T}$ that attains $\mathcal{V}(\lambda_\varepsilon)$ and satisfies}
\begin{align}
{\mathbb{E}_\infty[\tau^\star]=\varepsilon\ ,}
\end{align}
{and consequently $\tau^\star$ solves $\widetilde{\mathcal{Q}}(\varepsilon)$\ .}
\end{lemma}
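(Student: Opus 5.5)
We plan to prove Lemma~\ref{lemma:lag} in two parts. First, for every $\lambda>0$ we treat $\mathcal{V}(\lambda)$ as an optimal-stopping problem for the Markov state $\ell_t$. We then choose $\lambda$ by a monotonicity and continuity argument, using randomization to close any gap. Finally, the sufficiency part is a one-line Lagrangian comparison.

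\emph{Sufficiency.} Suppose $\tau^\star$ attains $\mathcal{V}(\lambda_\varepsilon)$ and $\mathbb{E}_\infty[\tau^\star]=\varepsilon$. Take any $\tau$ with $\mathbb{E}_\infty[\tau]=\varepsilon$. Then
\begin{align}
\mathbb{E}_\infty[\ell_{\tau}+\ell_{\tau-1}\ell_{\tau}]-\lambda_\varepsilon\varepsilon\leq \mathbb{E}_\infty[\ell_{\tau^\star}+\ell_{\tau^\star-1}\ell_{\tau^\star}]-\lambda_\varepsilon\varepsilon .
\end{align}
Dividing by $\varepsilon$ shows that $\tau^\star$ solves $\widetilde{\mathcal{Q}}(\varepsilon)$.

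\emph{Solving the stopping problem.} Under $\mathbb{P}_\infty$ the $\ell_t$ are i.i.d. with $\mathbb{E}_\infty[\ell_t]=1$. The reward at time $t$ is $\ell_t(1+\ell_{t-1})-\lambda t$, which depends on the pair $(\ell_{t-1},\ell_t)$, so the relevant Markov state is $\ell_t$ itself. We would write the value as $\mathcal{V}(\lambda)=\sup_\tau\mathbb{E}_\infty[\sum_{t<\tau}(-\lambda)+\ell_\tau(1+\ell_{\tau-1})]$ and introduce the function
\begin{align}
J_\lambda(x)=\sup_{\tau\geq1}\mathbb{E}\big[\ell_\tau(1+\ell_{\tau-1})-\lambda\tau\,\big|\,\ell_0=x\big].
\end{align}
This function satisfies the Bellman equation
\begin{align}
J_\lambda(x)=-\lambda+\mathbb{E}\big[\max\{\ell(1+x),\,J_\lambda(\ell)\}\big],
\end{align}
where $\ell$ has the law of $\ell_1$ under $\mathbb{P}_\infty$. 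Because $\lambda>0$ and $\mathbb{E}[\ell]<\infty$, the Bellman operator is a contraction-type monotone map on functions of linear growth, so value iteration converges. An optimal stopping time therefore exists: $\tau_\lambda=\inf\{t:\ell_t(1+\ell_{t-1})\geq J_\lambda(\ell_t)\}$, which is exactly the state-dependent TSR rule. Since stopping is immediate with positive probability at each step, $\mathbb{E}_\infty[\tau_\lambda]<\infty$, with geometric tails.

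\emph{Matching the ARL.} Let $\tau_\lambda$ be the maximal optimizer (stop only on strict inequality) and $\underline{\tau}_\lambda$ the minimal one (stop on equality). Standard Lagrangian arguments give that $\lambda\mapsto\mathbb{E}_\infty[\tau_\lambda]$ is nonincreasing: comparing optimality at $\lambda_1<\lambda_2$ and adding the two inequalities yields $(\lambda_2-\lambda_1)(\mathbb{E}[\tau_{\lambda_1}]-\mathbb{E}[\tau_{\lambda_2}])\geq0$. We would then establish the limits. As $\lambda\to\infty$ the rule stops at $t=1$, giving ARL $1$. As $\lambda\downarrow0$ the ARL tends to $\infty$, because continuing earns option value while the per-step cost vanishes. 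The fact to use here is that $\mathcal{V}(\lambda)\geq\sup_t(\mathbb{E}\max$ of window rewards$)-\lambda t$ grows like $\lambda^{-1}$ times something unbounded unless $\ell$ is bounded. For bounded $\ell$ we would argue with a finite essential supremum directly.

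The main obstacle is that $\lambda\mapsto\mathbb{E}_\infty[\tau_\lambda]$ can jump when the distribution of $\ell$ has atoms, since the stopping region then changes on a set of positive probability. We handle this by randomization. At a jump point $\lambda_\varepsilon$, both $\underline{\tau}_{\lambda_\varepsilon}$ and $\tau_{\lambda_\varepsilon}$ attain $\mathcal{V}(\lambda_\varepsilon)$, and their ARLs bracket $\varepsilon$. Draw $B$ at time zero and use $\underline{\tau}$ with probability $p$ and $\tau$ otherwise. This mixture still attains $\mathcal{V}(\lambda_\varepsilon)$, because the objective is linear in the mixing, and its ARL is $p\,\mathbb{E}[\underline{\tau}]+(1-p)\mathbb{E}[\tau]$. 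Choosing $p$ makes this equal to $\varepsilon$. The delicate point is the right-/left-continuity identification $\lim_{\lambda\uparrow\lambda_\varepsilon}\tau_\lambda=\underline{\tau}_{\lambda_\varepsilon}$, which we would obtain from the continuity of $J_\lambda$ in $\lambda$.
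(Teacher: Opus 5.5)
Your route differs from the paper's. The paper never studies $\lambda\mapsto\mathbb{E}_\infty[\tau_\lambda]$. It minimizes the convex dual function $\phi_\varepsilon(\lambda)=\lambda+\mathcal{V}(\lambda)/\varepsilon$. From $0\in\partial\phi_\varepsilon(\lambda_\varepsilon)$, which it asserts is the convex hull of the active slopes $1-\mathbb{E}_\infty[\tau]/\varepsilon$, it reads off two optimizers of $\mathcal{V}(\lambda_\varepsilon)$ whose ARLs lie on either side of $\varepsilon$. It then randomizes at time zero exactly as you do, and your sufficiency step is its weak-duality inequality.

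The dual argument is shorter and uses no structure of the stopping sets, but it hides two facts that your primal argument brings into the open:
\begin{itemize}
\item a minimizer in $(0,\infty)$ exists only if some optimizer has ARL at least $\varepsilon$;
\item the subdifferential formula is exactly your one-sided continuity.
\end{itemize}
Your version also exhibits $\tau^\star$ concretely as a mixture of two TSR rules.

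The identification you state is reversed, however. Because every admissible $\tau\geq1$, you have $J_\lambda\geq J_{\lambda_\varepsilon}+(\lambda_\varepsilon-\lambda)$ for $\lambda<\lambda_\varepsilon$. Together with continuity in $\lambda$, this means the stopping sets increase, as $\lambda\uparrow\lambda_\varepsilon$, to the strict set $\{W_t>J_{\lambda_\varepsilon}(\ell_t)\}$. So the limit from below is the maximal optimizer $\tau_{\lambda_\varepsilon}$, and $\underline{\tau}_{\lambda_\varepsilon}$ is the limit from above.

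\textbf{The genuine gap} is the claim $\mathbb{E}_\infty[\tau_\lambda]\to\infty$ as $\lambda\downarrow0$, on which the bracketing rests.

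For unbounded $\ell_1$ the claim is true, and the right argument is the same comparison you use for monotonicity. For an optimizer $\tau_\lambda$ with $\lambda<1$, using it as a candidate at $\lambda=1$ gives $(1-\lambda)\,\mathbb{E}_\infty[\tau_\lambda]\geq\mathcal{V}(\lambda)-\mathcal{V}(1)$. Here $\mathcal{V}(1)\leq0$ because $\mathbb{E}_\infty[W_\tau]\leq\mathbb{E}_\infty[\tau]$ (Lemma~\ref{lemma:average_identity} with $\mathcal{A}_2\leq1$). Also $\mathcal{V}(\lambda)\geq M-\lambda\,\mathbb{E}_\infty[\sigma_M]$ for $\sigma_M=\inf\{t:\ell_t\geq M\}$, so $\mathcal{V}(0^+)=\infty$.

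For bounded $\ell_1$, the essential-supremum argument can be made to work only when $\bar\ell=\operatorname*{ess\,sup}\ell_1$ is not an atom. Suppose instead $q=\mathbb{P}_\infty(\ell_1=\bar\ell)>0$, as with Bernoulli observations. Let $\sigma$ be the first $t$ with $\ell_{t-1}=\ell_t=\bar\ell$.
\begin{itemize}
\item At $\sigma$, $W_\sigma=\bar\ell(1+\bar\ell)$ is the largest possible reward.
\item So for every $\lambda>0$, replacing $\tau$ by $\tau\wedge\sigma$ strictly increases $\mathbb{E}_\infty[W_\tau-\lambda\tau]$ unless $\tau\leq\sigma$ a.s.
\item Hence every optimizer has ARL at most $\mathbb{E}_\infty[\sigma]=(1+q)/q^2$.
\end{itemize}
For $\varepsilon>(1+q)/q^2$, no $\lambda_\varepsilon>0$ exists, so the lemma as stated is false there. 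The correct multiplier is $0$, and $\widetilde{\mathcal{Q}}(\varepsilon)$ is still solved, e.g.\ by randomizing over later occurrences of such pairs.

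The paper's proof has the same hole. The bound $\phi_\varepsilon(\lambda)\geq\lambda$ gives coercivity only at $+\infty$, and in this example $\phi_\varepsilon$ is strictly increasing on $(0,\infty)$, so its infimum is not attained. You need a hypothesis such as $\mathbb{P}_\infty(\ell_1=\operatorname*{ess\,sup}\ell_1)=0$, or you must allow $\lambda_\varepsilon=0$.

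\textbf{Smaller repairs.}
\begin{itemize}
\item Without discounting, your Bellman operator is only nonexpansive, not a contraction. Take existence of optimizers from the Snell-envelope argument of Lemma~\ref{lemma:5} instead.
\item ``Positive stopping probability at each step'' is false as stated. At $t=1$ the reward is $\ell_1$ while $J_\lambda(\ell_1)\geq1+\ell_1-\lambda$, so the rule never stops at $t=1$ when $\lambda<1$.
\item $\mathcal{T}$ contains $\tau\equiv0$, which is the only optimizer of $\mathcal{V}(\lambda)$ when $\lambda>1$. Your restriction to $\tau\geq1$ is therefore harmless only if you select $\lambda_\varepsilon\leq1$.
\end{itemize}
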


\begin{proof}
See Appendix~\ref{proof:lemma:lag}.
\end{proof}
We proceed by analyzing the Lagrangian problem {$\mathcal{V}(\lambda)$}.
For $t \in \mathbb{N}$, define
\begin{align}
    W_t \dff \ell_t + \ell_{t-1}\ell_t\ ,
\end{align}
with the convention that {$W_0 = 0$}. Further, define
\begin{align}
\label{eq:admissible}
{\mathcal{T}_t \dff 
\left\{
\tau \in \mathcal{T}:\ \tau \ge t \;\;\; \text{a.s.}
\right\}\ ,}
\end{align}
{where $\mathcal{T}$ denotes the class of stopping times, possibly randomized at time zero, that take values in $\mathbb{N}_0$ and satisfy $\mathbb{P}_\infty(\tau<\infty)=1$.}
Given $\F_t$, define the infinite-horizon value function
\begin{align}
J_t(\F_t)
\dff 
\operatorname*{ess\,sup}_{\tau \in {\mathcal{T}_t}}
\mathbb{E}_\infty
 \left[
{W_\tau} - \lambda(\tau - t)
\,\middle|\, 
\F_t
\right]\ ,
\label{eq:inf_JR_def}
\end{align}
and the continuation value function;
\begin{align}
R_t(\F_t)
\dff
\mathbb{E}_\infty
 \left[
J_{t+1}(\F_{t+1})
\,\middle|\,
\F_t
\right]\ .
\label{eq:bell-Rt}
\end{align}
The essential supremum in~\eqref{eq:inf_JR_def} is taken with respect to the almost-sure order over $\F_t$--measurable random variables and therefore is well defined. Furthermore, by definition of the essential supremum, for any $\tau \in {\mathcal{T}_t}$,
\begin{align}
J_t(\F_t)
\ge
\mathbb{E}_\infty
 \left[
{W_\tau} - \lambda(\tau - t)
\,\middle|\,
\F_t
\right]
\quad \text{a.s.}
\label{eq:inf_J_dominates_any_tau}
\end{align}
Using this property, we next establish the Bellman identity that relates $J_t(\F_t)$ to the immediate reward {$W_t$} and the continuation value $R_t(\F_t)$. For notational simplicity, we henceforth write $J_t$ and $R_t$ in place of $J_t(\F_t)$ and $R_t(\F_t)$, respectively.

\begin{lemma}[Bellman identity for the infinite horizon]\label{lemma:bellman_identity}
Fix $\lambda>0$. For every $t\ge1$ we almost surely have
\begin{align}
J_t(\F_t)\;=\; \max\big\{\,{W_t},\ -\lambda + R_t\,\big\}\ .
\label{eq:bell-identity}
\end{align}
\end{lemma}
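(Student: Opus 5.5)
We prove the two inequalities $J_t\ge\max\{W_t,-\lambda+R_t\}$ and $J_t\le\max\{W_t,-\lambda+R_t\}$ almost surely. Throughout, we use the standard properties of the essential supremum over the family $\{\mathbb{E}_\infty[W_\tau-\lambda(\tau-t)\mid\F_t]:\tau\in\mathcal{T}_t\}$. The key one is that this family is \emph{directed upward}. Given $\tau_1,\tau_2\in\mathcal{T}_t$, let $A=\{\mathbb{E}_\infty[W_{\tau_1}-\lambda(\tau_1-t)\mid\F_t]\ge\mathbb{E}_\infty[W_{\tau_2}-\lambda(\tau_2-t)\mid\F_t]\}\in\F_t$. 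The pasted time $\tau_1\mathbf 1_A+\tau_2\mathbf 1_{A^c}$ is again in $\mathcal{T}_t$ and attains the maximum of the two conditional rewards. Hence there is a sequence $\tau^{(n)}\in\mathcal{T}_t$ whose conditional rewards increase a.s. to $J_t$. Before anything else, I would check integrability. Under $\mathbb{P}_\infty$ we have $\mathbb{E}_\infty[\ell_t]=1$ and $\mathbb{E}_\infty[\ell_{t-1}\ell_t]\le1$, and $W_\tau\ge0$. If needed, I would restrict to $\tau$ with $\mathbb{E}_\infty[\tau]<\infty$, since any other $\tau$ gives reward $-\infty$ and is irrelevant.

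\emph{Lower bound.} Stopping immediately, $\tau=t\in\mathcal{T}_t$, gives $J_t\ge W_t$ by~\eqref{eq:inf_J_dominates_any_tau}. For the continuation branch, take any $\sigma\in\mathcal{T}_{t+1}\subset\mathcal{T}_t$. The tower property gives
\begin{align}
J_t\ge\mathbb{E}_\infty[W_\sigma-\lambda(\sigma-t)\mid\F_t]=-\lambda+\mathbb{E}_\infty\big[\mathbb{E}_\infty[W_\sigma-\lambda(\sigma-t-1)\mid\F_{t+1}]\,\big|\,\F_t\big].
\end{align}
Choose $\sigma=\sigma^{(n)}$ from the upward-directed sequence for $J_{t+1}$. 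By monotone convergence for conditional expectations, the inner term increases to $J_{t+1}$, so the right-hand side increases to $-\lambda+R_t$. Monotone convergence applies because the sequence is increasing and bounded below by the integrable reward of $\sigma=t+1$, namely $W_{t+1}$. This yields $J_t\ge-\lambda+R_t$.

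\emph{Upper bound.} Fix $\tau\in\mathcal{T}_t$ and split on the event $\{\tau=t\}\in\F_t$. On $\{\tau=t\}$ the reward is exactly $W_t$. Set $\tau'=\max\{\tau,t+1\}\in\mathcal{T}_{t+1}$; it agrees with $\tau$ on $\{\tau>t\}$. Then
\begin{align}
\mathbb{E}_\infty[W_\tau-\lambda(\tau-t)\mid\F_t]=\mathbf 1_{\{\tau=t\}}W_t+\mathbf 1_{\{\tau>t\}}\Big(-\lambda+\mathbb{E}_\infty\big[\mathbb{E}_\infty[W_{\tau'}-\lambda(\tau'-t-1)\mid\F_{t+1}]\,\big|\,\F_t\big]\Big).
\end{align}
The inner conditional expectation is at most $J_{t+1}$ by definition. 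The whole expression is therefore bounded by $\mathbf 1_{\{\tau=t\}}W_t+\mathbf 1_{\{\tau>t\}}(-\lambda+R_t)\le\max\{W_t,-\lambda+R_t\}$. Taking the essential supremum over $\tau$ gives $J_t\le\max\{W_t,-\lambda+R_t\}$.

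Randomized stopping times are handled in the same way by conditioning on $\F_t\vee\sigma(B)$ throughout. Since $B$ is independent of the observations, none of the steps change.

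\emph{Main obstacle.} The main difficulty is the lower-bound continuation step: interchanging the conditional expectation with the essential supremum defining $J_{t+1}$. This is exactly where the upward-directed (lattice) property and monotone convergence are needed. One must also verify that $R_t$ is well defined, i.e.\ $J_{t+1}$ has an integrable positive part. I would get this from the bound $J_{t+1}\le\mathbb{E}_\infty[\sup_{k}(W_{t+1+k}-\lambda k)\mid\F_{t+1}]$. That supremum is integrable because $\mathbb{E}_\infty[W_{t+1+k}]\le2$ uniformly in $k$ while the penalty $\lambda k$ grows linearly. In fact the crude bound $\mathbb{E}_\infty[\sup_k(W_{t+1+k}-\lambda k)]\le\sum_k\mathbb{E}_\infty[(W_{t+1+k}-\lambda k)^+]$ must be finite, and I would control it via $\mathbb{E}_\infty[W\mathbf 1_{\{W>\lambda k\}}]$, which is summable in $k$ since $\mathbb{E}_\infty[W]\le 2$ and the tail-sum telescopes to at most $\mathbb{E}_\infty[W^2]/\lambda$-type terms. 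If that moment is not available, I would fall back on using only the positive-part integrability of $\mathbb{E}_\infty[W_\tau]\le\ldots$ for $\tau$ with finite mean.
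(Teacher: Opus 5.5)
Your argument is correct and follows the paper's proof. You get $J_t\ge W_t$ from $\tau\equiv t$, get $J_t\ge-\lambda+R_t$ from a near-optimal continuation in $\mathcal{T}_{t+1}$ plus the tower property, and get the upper bound by splitting on $\{\tau=t\}$ and bounding the continuation part by $-\lambda+R_t$. Your upward-directed sequence with conditional monotone convergence (in place of the paper's $\eta$-optimal $\tau^\eta$) and the device $\tau'=\max\{\tau,t+1\}$ just make explicit what the paper leaves implicit.

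The integrability side remark should be dropped, because $\sup_k(W_{t+1+k}-\lambda k)$ need not be integrable given only $\mathbb{E}_\infty[W_{t+1+k}]\le 2$; this essentially requires $\mathbb{E}_\infty[\ell_1^2]<\infty$. Nothing depends on it, however. Since $J_{t+1}\ge W_{t+1}\ge 0$, $R_t$ is well defined in $[0,\infty]$, and your monotone-convergence step works in the extended reals.
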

\begin{proof}
    See Appendix \ref{app:lemma:bellman}.
\end{proof}
The following lemma determines a stopping time that optimizes {$\mathcal{V}(\lambda)$} defined in~\eqref{eq:r}, which we subsequently show \textcolor{black}{solves the exact average-success problem in~\eqref{eq:q}}.
\begin{lemma}\label{lemma:5}
The following stopping rule is an optimal solution to the optimization problem in~\eqref{eq:r}. 
\begin{align}\label{eq:stop:hh}
{\tau_\lambda^\star\dff\inf\{t\ge 1\;:\; \ell_t+\ell_{t-1}\ell_t\geq -\lambda +R_t\} = {\mathscr T}(-\lambda+R_t)\ .}
\end{align}
\end{lemma}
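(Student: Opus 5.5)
Our plan is to verify that $\tau_\lambda^\star$ is the first entrance time into the stopping set $\{W_t = J_t\}$ of the Snell envelope defined in~\eqref{eq:inf_JR_def}, and then apply the standard optimal-stopping verification argument. We proceed in three steps.

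\textbf{Step 1 (Markov reduction).} Under $\mathbb{P}_\infty$ the $\ell_t$ are i.i.d., and the reward $W_\tau-\lambda\tau$ at a future time depends on the past only through $\ell_t$. We therefore first argue that $J_t$ and $R_t$ are functions of $\ell_t$ alone, i.e.\ $J_t=J(\ell_t)$ and $R_t=R(\ell_t)$, with the same functions for every $t$; this follows from time-homogeneity and a shift argument. By Lemma~\ref{lemma:bellman_identity}, $J_t=\max\{W_t,-\lambda+R_t\}$. Hence $\tau_\lambda^\star$ is exactly $\inf\{t\ge1: J_t=W_t\}$.

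\textbf{Step 2 (martingale property up to $\tau_\lambda^\star$).} Define $Z_t\dff J_t-\lambda t$. By Lemma~\ref{lemma:bellman_identity}, $J_t\ge -\lambda+\mathbb{E}_\infty[J_{t+1}\mid\F_t]$, so $Z_t$ is a supermartingale. On $\{\tau_\lambda^\star>t\}$ we have $J_t=-\lambda+R_t$, so the stopped process $Z_{t\wedge\tau_\lambda^\star}$ is a martingale. Optional sampling on the bounded times $\tau_\lambda^\star\wedge n$ gives
\[
\mathbb{E}_\infty[J_1]-\lambda=\mathbb{E}_\infty\big[J_{\tau_\lambda^\star\wedge n}-\lambda(\tau_\lambda^\star\wedge n)\big].
\]
For a competing $\tau\in\mathcal{T}$, the supermartingale inequality combined with $J\ge W$ yields $\mathbb{E}_\infty[W_{\tau\wedge n}-\lambda(\tau\wedge n)]\le \mathbb{E}_\infty[J_1]-\lambda$. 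We then pass to $n\to\infty$ via monotone convergence on the $\lambda\tau$ term. The case $\tau=0$ is trivial, since $W_0=0$.

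\textbf{Step 3 (limit passage), the main obstacle.} Passing to the limit in the identity for $\tau_\lambda^\star$ requires that $\tau_\lambda^\star<\infty$ a.s., that $\mathbb{E}_\infty[\tau_\lambda^\star]<\infty$, and uniform integrability of $J_{\tau_\lambda^\star\wedge n}$. Our route is as follows.
\begin{itemize}
\item \emph{Bounds on $J$.} Show $0\le J_t\le W_t+C$ with $C<\infty$. The bound follows because any continuation beyond one further step costs $\lambda$ per step, while $\mathbb{E}_\infty[W_{t+k}\mid\F_t]\le \ell_t+1$ for $k=1$ and $\le 2$ for $k\ge2$. Consequently $J_t\le \max\{W_t,\ \ell_t+2\}$ up to constants, which is integrable since $\mathbb{E}_\infty\ell=1$.
\item \emph{Finiteness of the stopping time.} With $J$ bounded this way, the region where $W_t\ge -\lambda+R_t$ contains $\{\ell_t\ge K\}$ for some large $K$. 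If $\mathbb{P}_\infty(\ell\ge K)>0$, then $\tau_\lambda^\star$ is dominated by a geometric time and has finite mean.
\item \emph{Degenerate case.} If $\ell$ is essentially bounded, we instead use the fact that a stop eventually beats paying $\lambda$ forever: waiting indefinitely gives $-\infty$ while $J\ge0$, so the stopping region has positive probability under i.i.d.\ sampling. Geometric domination again gives finite mean.
\end{itemize}
With these ingredients, dominated convergence gives $\mathbb{E}_\infty[W_{\tau_\lambda^\star}-\lambda\tau_\lambda^\star]=\mathbb{E}_\infty[J_1]-\lambda\ge$ the value of any competitor, which proves optimality in~\eqref{eq:r}.
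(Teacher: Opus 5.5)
\textbf{Verdict: genuine gap in Step 3.} Your overall route is the paper's. $Z_t=J_t-\lambda t$ is exactly the paper's Snell envelope $V_t$ of $Y_t=W_t-\lambda t$, and your supermartingale/stopped-martingale argument is what the paper gets by citing the discrete-time Snell-envelope theorem. The paper checks only $\sup_t\mathbb{E}_\infty[Y_t^+]\le2$, which is weaker than the standard hypothesis $\mathbb{E}_\infty[\sup_tY_t^+]<\infty$; the latter would here require $\mathbb{E}_\infty[\ell_1^2]<\infty$. So doing the limit passage by hand is worthwhile, but Step 3 does not do it correctly.

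First, your justification of $J_t\le W_t+C$ is invalid. You bound what a stopping rule can collect by the fixed-time means $\mathbb{E}_\infty[W_{t+k}\mid\F_t]\le2$. A stopping time, however, selects large values of $W$: a Shewhart rule with threshold $a$ has $\mathbb{E}_\infty[\ell_\tau]=\mathbb{P}_1(\ell_1\ge a)/\mathbb{P}_\infty(\ell_1\ge a)\ge a$. A Wald-type count gives only $\mathbb{E}_\infty[W_\tau-\lambda(\tau-t)\mid\F_t]\le\ell_t+(2-\lambda)\mathbb{E}_\infty[\tau-t\mid\F_t]$, which is useless for $\lambda<2$. Even the sharp change-of-measure bound $\mathbb{E}_\infty[W_\sigma]\le\mathbb{E}_\infty[\sigma]$ (the computation behind Lemma~\ref{lemma:average_identity}) settles only $\lambda\ge1$, where it forces $\tau_\lambda^\star\equiv1$.

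In the relevant range $\lambda<1$, you need finiteness of the value, which Step 2 already requires for $J_t$ to be integrable. This takes a truncation argument:
\begin{itemize}
\item split $\{W_t>M\}\subseteq\{\ell_t>\sqrt M\}\cup\{1+\ell_{t-1}>\sqrt M\}$;
\item use $\{\sigma\ge t\}\in\F_{t-1}$ and $\mathbb{E}_\infty[\ell_1\mathbf 1\{\ell_1>x\}]=\mathbb{P}_1(\ell_1>x)\to0$.
\end{itemize}
This gives $\mathbb{E}_\infty[W_\sigma]\le M+\eta(M)\,\mathbb{E}_\infty[\sigma]$ with $\eta(M)\to0$. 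Taking $\eta(M)\le\lambda$, together with the conditional version of the same bound, yields $J_t\le W_t+C_\lambda$.

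Second, the claim that the stopping region contains $\{\ell_t\ge K\}$ is false. Since $J_{t+1}\ge W_{t+1}$, we have $R_t\ge\mathbb{E}_\infty[W_{t+1}\mid\F_t]=1+\ell_t$, so stopping at $t$ forces $\ell_{t-1}\ell_t\ge1-\lambda$. For $\lambda<1$ the rule therefore never stops at $t=1$ (as $\ell_0=0$), and in the Gaussian mean-shift model no set $\{\ell_t\ge K\}$ lies in the stopping region. This traces back to Step 1: $J_t$ is a function of $(\ell_{t-1},\ell_t)$, not of $\ell_t$ alone (Lemma~\ref{lemma:suff_infinite}). Your bounded-likelihood-ratio case is only asserted, and it is not a corner case: it includes the paper's variance-contraction example, where $\ell\le5$.

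Once the truncation bound is available, you need neither geometric domination nor a case split:
\begin{itemize}
\item The stopped-martingale identity and $J\le W+C_\lambda$ give $\mathbb{E}_\infty[J_1]-\lambda\le C_\lambda+M+(\eta(M)-\lambda)\,\mathbb{E}_\infty[\tau_\lambda^\star\wedge n]$.
\item Choosing $\eta(M)\le\lambda/2$ then bounds $\mathbb{E}_\infty[\tau_\lambda^\star]$.
\item The sum $\sum_{t\le\tau_\lambda^\star}(W_t+C_\lambda)$ has mean at most $(2+C_\lambda)\mathbb{E}_\infty[\tau_\lambda^\star]$ and dominates $J_{\tau_\lambda^\star\wedge n}$, so dominated convergence finishes.
\end{itemize}

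Finally, your claim that the case $\tau=0$ is trivial needs $\mathbb{E}_\infty[J_1]\ge\lambda$. This holds for $\lambda\le1$ (use $\tau\equiv1$), but fails for $\lambda>1$, where $\tau\equiv0$ strictly beats $\tau_\lambda^\star\equiv1$. The paper's proof likewise only establishes optimality over $\mathcal{T}_1$.
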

\begin{proof}
See Appendix~\ref{app:lemma:5}.
\end{proof}
Finally, we show that $\ell_{t-1}$ and $\ell_t$ are sufficient statistics for describing the terms $J_t$ and $R_t$. This is formalized in the next lemma.
\begin{lemma}[Sufficient statistics]\label{lemma:suff_infinite}
There exist Borel functions $J:\,[0,\infty)^2 \to\mathbb R$ and $R:\,[0,\infty) \to\mathbb R$ such that, almost surely,
\begin{align}
J_t (\F_t)\,=\, J(\ell_{t-1},\ell_t)\ ,
\qquad \mbox{and} \qquad 
R_t (\F_t)\,=\, R(\ell_t)\ . \label{eq:ss_claim}
\end{align}
In particular, $(\ell_{t-1},\ell_t)$ is a sufficient statistic for $J_t$ and $\ell_t$ is a sufficient statistic for $R_t$.
\end{lemma}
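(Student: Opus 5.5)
The plan is to exploit the Markov structure of the observations under $\mathbb{P}_\infty$. The $\ell_t$ are i.i.d. under this measure and $\ell_{t+1}$ is independent of $\F_t$. The reward $W_{\tau}=\ell_\tau+\ell_{\tau-1}\ell_\tau$ depends only on the pair $(\ell_{\tau-1},\ell_\tau)$, so the pair process $Z_t\dff(\ell_{t-1},\ell_t)$ is a time-homogeneous Markov chain with respect to $\{\F_t\}$. We first show that $J_t$ is a function of $Z_t$ and then deduce the claim for $R_t$ directly.

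\textbf{Step 1: a candidate value function.} Define
\begin{align}
J(a,b)\dff \sup_{\tau\in\mathcal{T}_1}\mathbb{E}\big[\widetilde W_\tau-\lambda(\tau-1)\big],
\end{align}
where the expectation is taken for the shifted problem started from the state $Z_1=(a,b)$. Here $\widetilde W_1=b+ab$, and $\widetilde W_t$ for $t\geq2$ is built from i.i.d. copies of $\ell$, all of which have law $F_0$-induced. We then show that $J$ is Borel measurable. To do this, we define it instead as the limit of finite-horizon value functions $J^{(n)}$, obtained by backward induction:
\begin{align}
J^{(0)}(a,b)=b+ab,\qquad J^{(n+1)}(a,b)=\max\Big\{b+ab,\;-\lambda+\int J^{(n)}(b,y)\,dF_0^{\ell}(y)\Big\}.
\end{align}
Each $J^{(n)}$ is Borel by Tonelli's theorem and is nondecreasing in $n$. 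We take $J=\lim_n J^{(n)}$, which is Borel as a monotone limit.

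\textbf{Step 2: identify $J_t=J(Z_t)$.} For the inequality $J_t\geq J(Z_t)$, we take a finite-horizon rule of the form $\inf\{u\geq t: Z_u\in D_{u-t}\}$, where $D_k$ is the stopping set at stage $k$. By the Markov property its conditional reward is $J^{(n)}(Z_t)$, and we then let $n\to\infty$. For the reverse inequality, we note that any $\tau\in\mathcal{T}_t$ may depend on the past $\F_{t-1}$ and on the auxiliary variable $B$. Conditioned on that past, however, $\tau$ is a randomized stopping rule for the future chain started at $Z_t$. Since randomization cannot beat the supremum, its conditional value is at most the infinite-horizon Markov value. It therefore remains to show that this infinite-horizon value equals $\lim_n J^{(n)}$.

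\textbf{Main obstacle: the limit interchange.} The step we expect to be hardest is showing that the infinite-horizon value equals $\lim_n J^{(n)}$, which requires handling possibly unbounded stopping times. Our approach is to truncate at $\tau\wedge(t+n)$. This loses at most
\begin{align}
\mathbb{E}\big[W_\tau\mathbf 1\{\tau>t+n\}\big],
\end{align}
while it can only decrease the penalty term. Under $\mathbb{P}_\infty$ we have $\mathbb{E}[W_u\mid\F_{u-1}]=1+\ell_{u-1}$, so $\mathbb{E}[W_u]\leq 2$ for $u\geq t+2$. The ratio setting also gives $\mathbb{E}_\infty[W_\tau]\leq 2\,\mathbb{E}_\infty[\tau]+O(1)$. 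We only need to consider $\tau$ with finite value, which forces $\mathbb{E}[\tau]<\infty$, and uniform integrability then makes the truncation error vanish. If this bound is too weak, we would instead restrict to rules with $\mathbb{E}_\infty[\tau-t\mid\F_t]<\infty$, which is without loss of generality because of the $-\lambda\tau$ penalty.

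\textbf{Step 3: the claim for $R_t$.} Given $J_{t+1}=J(\ell_t,\ell_{t+1})$, where $\ell_{t+1}$ is independent of $\F_t$ with the $\mathbb{P}_\infty$-law of $\ell$, we obtain
\begin{align}
R_t=\mathbb{E}_\infty[J(\ell_t,\ell_{t+1})\mid\F_t]=\int J(\ell_t,y)\,dF_0^{\ell}(y)\dff R(\ell_t).
\end{align}
$R$ is Borel by Tonelli's theorem. The integral is well defined because $J\geq W\geq0$ and $J(a,y)\leq$ (a bound affine in $a$, $y$), which follows from the same bound on $\mathbb{E}[W]$. Finally, the Bellman identity of Lemma~\ref{lemma:bellman_identity} confirms consistency: $J(a,b)=\max\{b+ab,-\lambda+R(b)\}$.
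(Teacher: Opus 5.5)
Your proof is correct in structure and reaches the paper's conclusion by a more constructive route, except that one step, finiteness of $J$ and $R$, needs a different argument (second paragraph).

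\textbf{Comparison with the paper.} The paper observes that $Z_t=(\ell_{t-1},\ell_t)$ is a time-homogeneous Markov chain under $\mathbb{P}_\infty$ whose reward $W_t$ is a function of $Z_t$. It then invokes general Markov optimal-stopping theory to assert that $J_t$ depends on the past only through $Z_t$. Finally, it gets $R_t=R(\ell_t)$ from the independence of $\ell_{t+1}$ and $\F_t$, with the Bellman identity as a consistency check. You instead build $J$ as the monotone limit of the value-iteration functions $J^{(n)}$, so Borel measurability comes from Tonelli. You get $J_t\ge J(Z_t)$ from finite-horizon Markov rules. You get $J_t\le J(Z_t)$ by conditioning on the past and on $B$ and truncating at $\tau\wedge(t+n)$. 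Since truncation only lowers the penalty, the error is $\mathbb{E}_\infty[W_\tau\mathbf 1\{\tau>t+n\}\mid\F_t]$. This vanishes by dominated convergence once $\mathbb{E}_\infty[\tau-t\mid\F_t]<\infty$, and your bound $\mathbb{E}_\infty[W_\tau]\le 2\,\mathbb{E}_\infty[\tau]+O(1)$ is enough for that purpose. The payoff is that you prove, rather than cite, that the infinite-horizon value is the limit of the finite-horizon values and that time-zero randomization does not help. The paper uses exactly this without proof when it computes $R$ by value iteration in Section~\ref{sec:comp}, and when it asserts $J_t^H\uparrow J_t^{\xi}$ in the proof of Lemma~\ref{lemma:mono_convex}.

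\textbf{The step that needs repair.} Finiteness of $J$, needed for $J$ and $R$ to be real-valued, does not follow from ``the same bound on $\mathbb{E}[W]$''. A bound $\mathbb{E}_\infty[W_\tau]\le c\,\mathbb{E}_\infty[\tau]+K$ with fixed slope $c$ gives $\mathbb{E}_\infty[W_\tau-\lambda\tau]\le K+(c-\lambda)\mathbb{E}_\infty[\tau]$. That is uniformly bounded only for $\lambda\ge c$. Even the sharper slope $c=1$ from $\mathcal{A}_2\le 1$ does not suffice. Because $\mathcal{V}\ge 0$, the relevant multipliers satisfy $\lambda_\varepsilon\le\phi_\varepsilon(\lambda_\varepsilon)=\rho\le 1$, which is exactly the range such bounds miss. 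What you need is a slope that can be made smaller than any prescribed $\lambda>0$ at the cost of a larger constant.

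To get it, choose $a$ with $\mathbb{E}_\infty[(W_2-a)^+]\le\lambda$, which is possible since $\mathbb{E}_\infty[W_2]=2$, and write $W_\tau\le a+\sum_{u\le\tau}(W_u-a)^+$. Let $h(x)=\mathbb{E}_\infty[(\ell_1(1+x)-a)^+]$, so that $\mathbb{E}_\infty[(W_u-a)^+\mid\F_{u-1}]=h(\ell_{u-1})$. Handle the one-step dependence via $\{\tau\ge u\}\subseteq\{\tau\ge u-1\}\in\F_{u-2}$. This yields $\mathbb{E}_\infty[W_\tau-\lambda(\tau-t)\mid\F_t]\le a+1+\ell_t(1+\ell_{t-1})+\ell_t$. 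Hence $J(x,y)\le a+1+y(1+x)+y$ and $R(x)\le a+3+x<\infty$. With this replacement your argument is complete. The paper's own proof does not address finiteness at all.
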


\begin{proof}
See Appendix \ref{app:lemma:suff_infinite}.
\end{proof}
Based on these properties, we have the following expressions for $R$ and $J$, which are instrumental in specifying the optimal stopping time.
\begin{align}
R(\ell_t) &= {\mathbb{E}_\infty} \big[J(\ell_t,\ell_{t+1})\ \big|\ \ell_t\big]\ ,
\label{eq:inf_R_stationary}
\\
J(\ell_{t-1},\ell_t) &= \max\Big\{\ \ell_t+\ell_{t-1}\ell_t,\ -\lambda+R(\ell_t)\ \Big\}\ .
\label{eq:inf_J_stationary}
\end{align}
With these notations, the stopping time ${\tau_\lambda^\star}$ defined in~\eqref{eq:stop:hh} can be restated as\footnote{In the rest of the paper, when it is clear from the context, we use the shorthand $\tau^\star$ for $\tau_\lambda^\star$}
\begin{align}
{\tau_\lambda^\star\ =\ \inf\Big\{ t\ge 1\; : \ \ell_t+\ell_{t-1}\ell_t\ \ge\ -\lambda+R(\ell_t)\Big\}\ .}
\label{eq:inf_tau_star_stationary}
\end{align}
\textcolor{black}{Lemmas~\ref{lemma:average_identity}, ~\ref{lemma:eq},~\ref{lemma:lag}, and~\ref{lemma:5}, collectively, establish that the stopping rule specified in~\eqref{eq:inf_tau_star_stationary} maximizes the exact survival-weighted average criterion $\mathcal{A}_2$ subject to the ARL constraint.}
After selecting $\lambda=\lambda_\varepsilon$ so that
$\mathbb{E}_\infty[\tau_{\lambda_\varepsilon}^\star]=\varepsilon$,
define $\tau^\star\dff\tau_{\lambda_\varepsilon}^\star$. \textcolor{black}{The following theorem formalizes the resulting exact average optimality.}

\begin{theorem}[{\color{black} TSR Average Optimality} for $\xi=2$]\label{th:average_opt_xi2}
For the stopping time $\tau^\star$ defined in~\eqref{eq:inf_tau_star_stationary} with $\lambda=\lambda_\varepsilon$ selected so that $\mathbb{E}_\infty[\tau^\star]=\varepsilon$, we have
\begin{align}
\textcolor{black}{
\mathcal{A}_2(\tau^\star)
=
\frac{\mathbb{E}_\infty[\ell_{\tau^\star}+\ell_{\tau^\star-1}\ell_{\tau^\star}]}
{\mathbb{E}_\infty[\tau^\star]}
=
\sup_{\substack{\tau: \mathbb{E}_\infty[\tau]\geq \varepsilon}}
\mathcal{A}_2(\tau)\ .}
\label{eq:exact_average_opt_xi2}
\end{align}
\end{theorem}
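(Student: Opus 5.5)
The proof will chain the lemmas already established. The first equality in~\eqref{eq:exact_average_opt_xi2} is Lemma~\ref{lemma:average_identity} specialized to $\xi=2$, i.e.,~\eqref{eq:average_identity_xi2}. It applies because $\mathbb{E}_\infty[\tau^\star]=\varepsilon\in[1,\infty)$, so $0<\mathbb{E}_\infty[\tau^\star]<\infty$. The substance is the second equality, namely that $\tau^\star$ attains $\mathcal{P}_{\rm A}(\varepsilon,2)=\mathcal{Q}(\varepsilon)$.

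\textbf{Step 1: reduction to the equality-constrained problem.} Take any $\tau$ with $\mathbb{E}_\infty[\tau]\geq\varepsilon$.
\begin{itemize}
\item If $\mathbb{E}_\infty[\tau]=\infty$, then $\mathcal{A}_2(\tau)$ requires separate handling, since the survival weights are undefined. I would treat such $\tau$ either by the convention that these rules are excluded from $\mathcal{T}$ (the footnote's feasibility definition requires $\mathbb{E}_\infty[\nu]<\infty$) or by a truncation argument: $\tau\wedge n$ has finite ARL exceeding $\varepsilon$ for large $n$, and its criterion converges to that of $\tau$.
\item If $\mathbb{E}_\infty[\tau]<\infty$, Lemma~\ref{lemma:eq} yields a possibly randomized $\nu'$ with $\mathbb{E}_\infty[\nu']=\varepsilon$ and the same ratio.
\end{itemize}
Hence $\sup_{\mathbb{E}_\infty[\tau]\ge\varepsilon}\mathcal{A}_2(\tau)=\widetilde{\mathcal{Q}}(\varepsilon)$. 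The reverse inequality is trivial, because the equality-constrained class is a subset of the inequality-constrained one.

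\textbf{Step 2: Lagrangian argument.} By Lemma~\ref{lemma:lag} there is $\lambda_\varepsilon>0$ and a rule attaining $\mathcal{V}(\lambda_\varepsilon)$ with ARL exactly $\varepsilon$. By Lemma~\ref{lemma:5} (using Lemmas~\ref{lemma:bellman_identity} and~\ref{lemma:suff_infinite}), $\tau^\star=\tau^\star_{\lambda_\varepsilon}$ in~\eqref{eq:inf_tau_star_stationary} is such an optimizer; if randomization at ties is needed to hit $\varepsilon$ exactly, I would take the randomized version supplied by Lemma~\ref{lemma:lag}. The standard argument then runs as follows. For any $\tau$ with $\mathbb{E}_\infty[\tau]=\varepsilon$,
\[
\mathbb{E}_\infty[W_\tau]-\lambda_\varepsilon\varepsilon\le\mathcal{V}(\lambda_\varepsilon)=\mathbb{E}_\infty[W_{\tau^\star}]-\lambda_\varepsilon\varepsilon .
\]
Cancelling $\lambda_\varepsilon\varepsilon$ and dividing by $\varepsilon$ gives $\mathcal{A}_2(\tau)\le\mathcal{A}_2(\tau^\star)$. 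Here $\mathcal{V}(\lambda_\varepsilon)$ is finite because $\mathbb{E}_\infty[W_\tau]\le 2\,\mathbb{E}_\infty[\tau]$ by Lemma~\ref{lemma:average_identity} (since $\mathcal{A}_2\le1$); I would invoke the same bound to justify the subtraction. Combining with Step 1 gives the claim.

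\textbf{Main obstacle.} The delicate point is the existence of $\lambda_\varepsilon$ with $\mathbb{E}_\infty[\tau^\star_{\lambda_\varepsilon}]=\varepsilon$ exactly, which requires monotonicity and continuity of $\lambda\mapsto\mathbb{E}_\infty[\tau^\star_\lambda]$ and possibly randomization. Since this is packaged in Lemma~\ref{lemma:lag}, I would rely on it. The only remaining care is to check that the randomized stopping time from Lemma~\ref{lemma:lag} coincides with $\tau^\star$ in~\eqref{eq:inf_tau_star_stationary} up to tie-breaking on the boundary. Tie-breaking does not alter the value, since on the boundary stopping and continuing yield equal $J$.
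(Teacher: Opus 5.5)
Your proposal is correct and is essentially the paper's proof. The paper likewise chains Lemma~\ref{lemma:average_identity} (ratio identity), Lemma~\ref{lemma:eq} (reduction to ARL equality), and Lemmas~\ref{lemma:lag} and~\ref{lemma:5} (the Lagrangian optimizer with ARL $\varepsilon$); your explicit inequality $\mathbb{E}_\infty[W_\tau]-\lambda_\varepsilon\varepsilon\le\mathcal{V}(\lambda_\varepsilon)$ and your restriction to finite-ARL rules (as the survival weights require) only spell out what the paper leaves implicit, though note that $\mathcal{A}_2\le 1$ gives $\mathbb{E}_\infty[W_\tau]\le\mathbb{E}_\infty[\tau]$, and finiteness of $\mathcal{V}(\lambda_\varepsilon)$ follows most directly from its being attained by $\tau^\star$, whose ARL is $\varepsilon$.
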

\begin{proof}
\textcolor{black}{Lemma~\ref{lemma:average_identity} identifies $\mathcal{A}_2(\tau)$ with the ratio in~\eqref{eq:q}. Lemma~\ref{lemma:eq} reduces the ARL inequality to equality without changing that ratio. Lemma~\ref{lemma:lag} identifies a multiplier $\lambda_\varepsilon$ whose Lagrangian optimizer has ARL $\varepsilon$, and Lemma~\ref{lemma:5} identifies that optimizer as the stopping rule in~\eqref{eq:inf_tau_star_stationary}. These statements together prove~\eqref{eq:exact_average_opt_xi2}.}
\end{proof}
\textcolor{black}{Theorem~\ref{thm:upper_bound2} additionally implies}
\begin{align}
\textcolor{black}{
\mathcal{L}_{\rm L}(\tau^\star)
\leq
\mathcal{L}_{\rm P}(\tau^\star)
\leq
s\cdot \mathcal{A}_2(\tau^\star)\ .}
\end{align}
\textcolor{black}{This relation records the worst-case performance bound associated with the average-optimal rule, but no finite-sample equality or Pollak-/Lorden-optimality claim is made for $\xi=2$.}

Hence, the \textcolor{black}{exact average-optimal} test involves comparing
$(\ell_t+\ell_{t-1}\ell_t)$ with the
\textcolor{black}{state-dependent continuation boundary}
$-\lambda+R(\ell_t)$.
Here $\lambda=\lambda_\varepsilon$ is selected so that the ARL constraint
$\mathbb{E}_\infty[\tau^\star]=\varepsilon$ holds.
The value of this \textcolor{black}{boundary} depends on the last likelihood ratio $\ell_t$.
We discuss the computational aspect of finding $R(\ell)$ in Section~\ref{sec:comp}.

\section{\textcolor{black}{Probability-maximizing Detection: General Admissible Window $\xi\geq 2$}}
\label{sec:Xi}

In this section, we generalize the results of Section~\ref{sec:Xi2} to an arbitrary
\textcolor{black}{admissible detection-window length $\xi\geq 2$}. The arguments follow the same line of reasoning as for $\xi=2$. Therefore, we state
the key definitions and results and omit the proofs for brevity.
\textcolor{black}{As in the two-sample case, the window determines whether a stop is
credited as successful; it does not truncate the stopping rule. The exact finite-sample result generalized in this section is optimality with respect to the survival-weighted average success criterion $\mathcal{A}_\xi$; the Pollak- and Lorden-type criteria remain complementary worst-case measures and are related to $\mathcal{A}_\xi$ through the single-opportunity upper bound.}
For this purpose, at any time $t$ we define
\begin{align}
\ell_\xi^t
\dff
(\ell_{t-\xi+1},\dots,\ell_t)
\in[0,\infty)^\xi\ ,
\end{align}
as the collection of the last $\xi$ likelihood ratio terms up to time $t$.
For notational convenience (and consistent with Section~\ref{sec:Xi2}), we set
$\ell_0=0$ and extend this convention by letting $\ell_t=0$ for all $t\leq 0$.
We also define the $(\xi-1)$-lag vector
\begin{align}
\ell_{\xi-1}^t
\dff
(\ell_{t-\xi+2},\dots,\ell_t)
\in[0,\infty)^{\xi-1}\ .
\end{align}
Recall the \textcolor{black}{TSR statistic at} time $t$:
\begin{align}
W_t^{(\xi)}
=
\sum_{k=1}^{\xi}
\prod_{j=0}^{k-1}\ell_{t-j}\ .
\end{align}
\textcolor{black}{By Lemma~\ref{lemma:average_identity}, for every stopping time satisfying $0<\mathbb{E}_\infty[\tau]<\infty$,
\begin{align}
\mathcal{A}_\xi(\tau)
=
\frac{\mathbb{E}_\infty[W_\tau^{(\xi)}]}
{\mathbb{E}_\infty[\tau]}\ .
\label{eq:average_identity_general_section}
\end{align}
Furthermore, the reduction of the ARL inequality to equality and the Lagrangian equivalence arguments in Lemmas~\ref{lemma:eq} and~\ref{lemma:lag} extend directly after replacing $W_t^{(2)}$ by $W_t^{(\xi)}$.}
In the first step, we define the following unconstrained Lagrangian problem
\textcolor{black}{that generalizes~\eqref{eq:r} from the two-sample case to an
arbitrary window length $\xi\geq2$}:
\begin{align}\label{eq:r_xi}
\mathcal{V}_\xi(\lambda)
\dff
\sup_{\tau\in\mathcal{T}}
\mathbb{E}_\infty
\big[
{\color{black}W_\tau^{(\xi)}}-\lambda\tau
\big]\ .
\end{align}
Here, $\mathcal{T}$ denotes the class of stopping times, possibly randomized at time zero, with values in $\mathbb{N}_0$. Similarly to Section~\ref{sec:Xi2}, define for
$t\in\mathbb{N}$ the truncated class
\begin{align}
\mathcal{T}_t
\dff
\{\tau\in\mathcal{T}:\tau\geq t\ \text{a.s.}\},
\end{align}
and the value and continuation functions
\begin{align}
\label{eq:reward0_general}
J_t^{\xi}(\F_t)
\dff
\operatorname*{ess\,sup}_{\tau\in\mathcal{T}_t}
\mathbb{E}_\infty
\left[
{\color{black}W_\tau^{(\xi)}}-\lambda(\tau-t)
\,\middle|\ ,
\F_t
\right]\ ,
\end{align}
and
\begin{align}\label{eq:cost2_general}
R_t^{\xi}(\F_t)
\dff
\mathbb{E}_\infty
\left[
J_{t+1}^{\xi}(\F_{t+1})
\,\middle|\ ,
\F_t
\right]\ .
\end{align}
In particular, the Bellman identity takes the form
\begin{align}
J_t^{\xi}(\F_t)
=
\max
\big\{
W_t^{(\xi)},
-\lambda+R_t^{\xi}(\F_t)
\big\}\ .
\end{align}
Similar to the case of $\xi=2$, we first show that the problem at hand is a Markov optimal stopping problem.
\begin{lemma}[Sufficient Statistic]\label{lemma:ss:general}
\textcolor{black}{Fix $\xi\geq2$} and $\lambda>0$.
There exist Borel functions
\begin{align}
J_\xi:[0,\infty)^\xi\to\mathbb{R}
\qquad\text{and}\qquad
R_\xi:[0,\infty)^{\xi-1}\to\mathbb{R}\ ,
\end{align}
such that, almost surely, for all $t\in\mathbb{N}$
\begin{align}
J_t^{\xi}(\F_t)
=
J_\xi(\ell_\xi^t)\ ,
\qquad
R_t^{\xi}(\F_t)
=
R_\xi(\ell_{\xi-1}^t)\ .
\end{align}
In particular, $\ell_\xi^t$ is a sufficient statistic for
$J_t^{\xi}$ and $\ell_{\xi-1}^t$ is a sufficient statistic for
$R_t^{\xi}$.
\end{lemma}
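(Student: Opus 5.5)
The plan is to show that, under $\mathbb{P}_\infty$, the problem in~\eqref{eq:r_xi} is a time-homogeneous Markov optimal stopping problem with state $\ell_\xi^t$. The value function is then obtained as the limit of finite-horizon value functions, each of which is a Borel function of the current state; the argument mirrors the one used for Lemma~\ref{lemma:suff_infinite} with $\xi=2$.

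\textbf{Markov structure.} Under $\mathbb{P}_\infty$ the $\ell_t$ are i.i.d. and independent of $\F_{t-1}$. With the convention $\ell_t=0$ for $t\le 0$, the process $Y_t\dff\ell_\xi^t$ is Markov, because $Y_{t+1}=(\ell_{t-\xi+2},\dots,\ell_t,\ell_{t+1})$ is obtained by a shift of $Y_t$ plus a fresh independent coordinate. The reward $W_t^{(\xi)}=g(Y_t)$ with $g(y_1,\dots,y_\xi)=\sum_{k=1}^{\xi}\prod_{j=0}^{k-1}y_{\xi-j}$ is a nonnegative Borel function. Only the last $\xi-1$ coordinates of $Y_t$ survive into $Y_{t+1}$.

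\textbf{Finite-horizon approximation.} For a horizon $N$ I would define $J^{N}_N=W_N^{(\xi)}$ and, by backward induction,
\[
J^{N}_t=\max\{W_t^{(\xi)},\,-\lambda+\mathbb{E}_\infty[J^{N}_{t+1}\mid\F_t]\}.
\]
By induction, $J^N_t=h^{N-t}(Y_t)$ for Borel functions $h^n$ defined recursively by $h^0=g$ and
\[
h^{n+1}(y)=\max\{g(y),-\lambda+\int h^n(y_2,\dots,y_\xi,u)\,dF_0^{\ell}(u)\},
\]
where $F_0^{\ell}$ is the law of $\ell_1$ under $\mathbb{P}_\infty$. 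The conditional expectation reduces to this integral by independence of $\ell_{t+1}$ from $\F_t$, using Fubini or a regular conditional distribution. The integral term depends only on $(y_2,\dots,y_\xi)$, which gives the $(\xi-1)$-dimensional structure of $R$. Since $h^n$ is nondecreasing in $n$, the limit $h=\lim_n h^n$ exists and is Borel, possibly $+\infty$ a priori.

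\textbf{Identifying the limit with $J_t^\xi$ (main obstacle).} I must show $J_t^\xi=h(Y_t)$ almost surely. The inequality $J_t^\xi\ge J_t^N$ for every $N$ is immediate, since finite-horizon stopping times belong to $\mathcal{T}_t$; hence $J_t^\xi\ge h(Y_t)$.

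For the reverse inequality, take any $\tau\in\mathcal{T}_t$ and truncate it as $\tau\wedge N$, then control the tail error
\[
\mathbb{E}_\infty\big[(W_\tau^{(\xi)}-\lambda\tau)-(W_{\tau\wedge N}^{(\xi)}-\lambda(\tau\wedge N))\,\big|\,\F_t\big].
\]
The penalty part of this difference is $-\lambda(\tau-N)^+\le 0$. The reward part is $W_\tau^{(\xi)}-W_N^{(\xi)}$ on the event $\{\tau>N\}$. The key bound is $\mathbb{E}_\infty[W_s^{(\xi)}\mid\F_t]\le C(Y_t)$ uniformly in $s$, since $\mathbb{E}_\infty[\ell]=1$ gives $\mathbb{E}_\infty[W^{(\xi)}_s]\le\xi$ once $s\ge t+\xi$. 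Even so, $W_\tau^{(\xi)}$ at a random time is not uniformly integrable in general, so this bound alone does not settle the tail.

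I would therefore first try a different route. Instead of truncating, I would show directly that the limit $h$ satisfies $h(Y_t)\ge \mathbb{E}_\infty[W_\tau^{(\xi)}-\lambda(\tau-t)\mid\F_t]$ for every $\tau\in\mathcal{T}_t$. The idea is to verify that
\[
Z_s\dff h(Y_s)-\lambda(s-t)
\]
is a supermartingale dominating the reward $W_s^{(\xi)}-\lambda(s-t)$, and then apply optional sampling to $\tau\wedge N$ and let $N\to\infty$.

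Passing to the limit needs $\mathbb{E}_\infty[h(Y_{\tau\wedge N})]$ to control $\mathbb{E}_\infty[W_\tau^{(\xi)}]$. This follows by Fatou's lemma applied to the reward term: $W_{\tau\wedge N}^{(\xi)}\to W_\tau^{(\xi)}$ a.s. because $\tau<\infty$ a.s. Monotone convergence handles $\tau\wedge N$ in the penalty term.

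Finiteness of $h$ should follow from $h(y)\le g(y)+C$, which uses $\lambda>0$ and $\sum_s\mathbb{E}[W_s^{(\xi)}]$-type bounds with the $-\lambda$ drift. Once this is in place, $J_\xi\dff h$ works, and $R_\xi(y_2,\dots,y_\xi)\dff\int h(y_2,\dots,y_\xi,u)\,dF_0^\ell(u)$ gives $R_t^\xi=R_\xi(\ell_{\xi-1}^t)$ almost surely.
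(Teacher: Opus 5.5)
Your route differs from the paper's. The paper proves this lemma only by pointing to the $\xi=2$ case (Lemma~\ref{lemma:suff_infinite}). There, $(\ell_{t-\xi+1},\dots,\ell_t)$ is a time-homogeneous Markov chain under $\mathbb{P}_\infty$ and $W_t^{(\xi)}$ is a function of it. General Markov optimal-stopping theory \cite[Ch.~III]{ShiryaevOptimal} is then invoked to conclude that the value depends on the past only through this state, and $R_\xi$ follows by integrating out the fresh coordinate.

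You instead build the candidate $h=\lim_n h^n$ by value iteration from $h^0=g$ and verify $J_t^\xi=h(Y_t)$ directly. The lower bound comes from finite-horizon rules. For the upper bound, monotone convergence in the recursion gives $h\ge g$ and $h(y)\ge-\lambda+\int h(y_2,\dots,y_\xi,u)\,dF_0^{\ell}(u)$. Hence $h(Y_s)-\lambda(s-t)$ is a supermartingale dominating the penalized reward, and optional sampling at $\tau\wedge N$, Fatou on the reward and monotone convergence on the penalty finish the argument. That verification is sound and gives more than the citation does:
\begin{itemize}
\item explicit Borel measurability;
\item the stationary Bellman equation that Section~\ref{sec:comp} iterates, together with convergence of that value iteration from below;
\item the limit $J_t^H\uparrow J_t^\xi$ that the proof of Lemma~\ref{lemma:mono_convex} takes for granted.
\end{itemize}

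The step that does not work as written is finiteness of $h$. The statement requires it, since $J_\xi$ and $R_\xi$ are $\mathbb{R}$-valued, and the paper's citation also leaves it implicit. A bound driven by $\sum_s\mathbb{E}_\infty[W_s^{(\xi)}]$ and the $-\lambda$ drift cannot deliver it. Indeed $\mathbb{E}_\infty[W_s^{(\xi)}]=\xi$ for $s\ge\xi$, so the net drift $\xi-\lambda$ is positive whenever $\lambda<\xi$. The natural refinement $\sum_{m\ge1}\mathbb{E}_\infty[(W_{t+m}^{(\xi)}-\lambda m)^+]$ is finite only if $\mathbb{E}_\infty[\ell_1^2]<\infty$. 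That fails, for example, for $F_0=\mathcal N(0,1)$ and $F_1=\mathcal N(0,\sigma^2)$ with $\sigma^2\ge2$.

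Here is an argument that works.
\begin{itemize}
\item Decompose $W_{t+k}^{(\xi)}$ as the TSR statistic built from $\ell_{t+1},\dots,\ell_{t+k}$ alone, plus $\ell_{t+1}\cdots\ell_{t+k}\,P_k(y)$. Here $P_k(y)\dff\sum_{m=1}^{\xi-k}\prod_{i=0}^{m-1}y_{\xi-i}$ is nonincreasing in $k$, vanishes for $k\ge\xi$, and satisfies $P_1(y)\le g(y)$.
\item Since $\ell_{t+1}\cdots\ell_{t+k}$ is a nonnegative mean-one martingale, this yields $h(y)\le g(y)+\mathcal{V}_\xi(\lambda)$. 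This is your bound with $C=\mathcal{V}_\xi(\lambda)$.
\item Finiteness of $\mathcal{V}_\xi(\lambda)$ for every $\lambda>0$ comes from uniform integrability of window products, not from drift. Let $\Pi_{s,k}\dff\prod_{j=0}^{k-1}\ell_{s-j}$. Then $W_s^{(\xi)}\le K+\sum_{k=1}^{\xi}\Pi_{s,k}\mathbf{1}\{\Pi_{s,k}>K/\xi\}$.
\item The event $\{\tau=s\}$ is contained in $\{\tau\ge s-k+1\}\in\F_{s-k}$, which is independent of $\Pi_{s,k}$. Also $q_k(a)\dff\mathbb{E}_\infty[\Pi_{k,k}\mathbf{1}\{\Pi_{k,k}>a\}]$ equals the $F_1^{\otimes k}$-probability that $\ell_1\cdots\ell_k>a$, so $q_k(a)\to0$ as $a\to\infty$.
\item Hence $\mathbb{E}_\infty[W_\tau^{(\xi)}]\le K+\xi^2+c_K\,\mathbb{E}_\infty[\tau]$ with $c_K\dff\sum_{k=1}^{\xi}q_k(K/\xi)$. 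Choosing $K$ with $c_K\le\lambda$ gives $\mathcal{V}_\xi(\lambda)\le K+\xi^2<\infty$.
\end{itemize}
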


\begin{proof}
The proof arguments parallel those of the proof of
Lemma~\ref{lemma:suff_infinite}.
\end{proof}
Based on Lemma~\ref{lemma:ss:general}, we therefore write
\begin{align}
J_t^{\xi}=J_\xi(\ell_\xi^t)
\qquad\text{and}\qquad
R_t^{\xi}=R_\xi(\ell_{\xi-1}^t)\ .
\end{align}
With these definitions, the extension of the $\xi=2$ results to general $\xi$
is summarized next.

\begin{theorem}[TSR Test for general $\xi$]\label{thm:general}
For any integer $\xi\geq 2$, the following statements are true.

\begin{enumerate}[leftmargin=*]

\item \emph{\textcolor{black}{Worst-case upper bound and exact average identity:}}
For any stopping time
$\tau\in\mathcal{T}$ satisfying
$0<\mathbb{E}_\infty[\tau]<\infty$,
\textcolor{black}{the Pollak- and Lorden-type criteria satisfy}
\begin{align}\label{eq:upper:general}
\mathcal{L}_{\rm L}(\tau)
\leq
\mathcal{L}_{\rm P}(\tau)
\textcolor{black}{\leq
s\cdot \mathcal{A}_\xi(\tau)}
&
=s\cdot 
\frac{
\mathbb{E}_\infty
\big[
{\color{black}W_\tau^{(\xi)}}
\big]
}{
\mathbb{E}_\infty[\tau]
}
=s\cdot 
\frac{
\mathbb{E}_\infty
\left[
\sum_{k=1}^{\xi}
\prod_{j=0}^{k-1}\ell_{\tau-j}
\right]
}{
\mathbb{E}_\infty[\tau]
}\ .
\end{align}

\item \emph{\textcolor{black}{Stopping rule maximizing the exact average criterion:}}
Let $\lambda>0$ and let $R_\xi$ be as in
Lemma~\ref{lemma:ss:general}. The stopping time
\begin{align}\label{eq:st:general}
\tau_{\lambda,\xi}^\star
\dff
\inf
\Big\{
t\geq1:
W_t^{(\xi)}
\geq
-\lambda+R_\xi(\ell_{\xi-1}^t)
\Big\}
=
\mathscr{T}
\big(
-\lambda+R_\xi(\ell_{\xi-1}^t)
\big)
\end{align}
\textcolor{black}{maximizes the Lagrangian in~\eqref{eq:r_xi}. After choosing $\lambda=\lambda_\varepsilon$ so that}
\begin{align}
\mathbb{E}_\infty
[
\tau_{\lambda_\varepsilon,\xi}^\star
]
=
\varepsilon\ ,
\end{align}
\textcolor{black}{the resulting stopping rule solves the exact average-success problem $\mathcal{P}_{\rm A}(\varepsilon,\xi)$ in~\eqref{eq:average_optimization_problem}.}

\item \emph{\textcolor{black}{Exact average optimality:}}
For $\tau_\xi^\star
\dff
\tau_{\lambda_\varepsilon,\xi}^\star$ we have
\begin{align}
\textcolor{black}{
\mathcal{A}_\xi(\tau_\xi^\star)}
&
=
\frac{
\mathbb{E}_\infty
\big[
{\color{black}W_{\tau_\xi^\star}^{(\xi)}}
\big]
}{
\mathbb{E}_\infty[\tau_\xi^\star]
}
=
\frac{
\mathbb{E}_\infty
\left[
\sum_{k=1}^{\xi}
\prod_{j=0}^{k-1}
\ell_{\tau_\xi^\star-j}
\right]
}{
\mathbb{E}_\infty[\tau_\xi^\star]
}
=
\sup_{\substack{\tau\in\mathcal{T}:\\
\varepsilon\leq\mathbb{E}_\infty[\tau]<\infty}}
\mathcal{A}_\xi(\tau)\ .
\label{eq:average_opt_general}
\end{align}

\end{enumerate}
\end{theorem}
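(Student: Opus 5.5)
The plan is to follow the three-step template already carried out for $\xi=2$, replacing $W_t^{(2)}$ by $W_t^{(\xi)}$ throughout and checking where the argument depends on $\xi$.

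\emph{Part 1.} I would argue as in Theorem~\ref{thm:upper_bound2}. Fix $\theta\in\Theta_s(\xi)$ and an episode $i$ with onset $\gamma=\gamma_i$. On $\F_{\gamma+\xi-1}$, the measure $\mathbb P_\theta$ has the same law as $\mathbb P_\gamma^\xi$ for the observations indexed up to $\gamma+\xi-1$, up to the pre-change law. The pre-change law is a product of $F_0$/$F_1$ factors for earlier episodes, but conditioning on $\F_{\gamma-1}$ removes it.

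Change measure on the event $\{\tau=\gamma+k-1\}$ for $k=1,\dots,\xi$, using the density $\prod_{j=0}^{k-1}\ell_{\gamma+j}$. This gives
\[
\mathbb P_\theta(\gamma\le\tau<\gamma+\xi\mid\F_{\gamma-1})=\mathbb E_\infty\Big[\sum_{k=1}^{\xi}\mathbb 1\{\tau=\gamma+k-1\}\prod_{j=0}^{k-1}\ell_{\gamma+j}\,\Big|\,\F_{\gamma-1}\Big].
\]
Dividing by $\mathbb P_\infty(\tau\ge\gamma\mid\F_{\gamma-1})$ yields each Pollak summand. The Lorden summand is dominated by the Pollak one since an essential infimum is at most the corresponding average.

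Next, each Pollak summand $p_\gamma$ is at most $\sup_{\gamma'}p^\xi_{\gamma'}(\tau)$. The natural route is then the argument used in the appendix for $\xi=2$: bound a minimum over onsets by a weighted average with weights $w_\gamma(\tau)$. Concretely, $\inf_\gamma p_\gamma^\xi\le\sum_\gamma w_\gamma p_\gamma^\xi=\mathcal A_\xi(\tau)$. Since the infimum over $\theta$ includes configurations that place episode $i$ at any onset, each summand's infimum is at most $\mathcal A_\xi(\tau)$. Summing over $i$ gives the factor $s$. The final equalities are Lemma~\ref{lemma:average_identity} together with the definition of $W^{(\xi)}_\tau$.

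A subtlety I would need to handle is that the infimum is over joint configurations. The episodes cannot be moved independently, but the onsets may be shifted arbitrarily far apart, so the infimum of a sum is bounded by choosing a sequence of configurations. I expect this to be the delicate point. I would handle it by noting that $\inf_\theta\sum_i\le\sum_i$ evaluated along configurations where every $\gamma_i$ ranges over a set on which each $p^\xi_{\gamma_i}$ is near $\inf_\gamma p^\xi_\gamma$. An averaging argument over shifts also works: average the sum against the product of weights.

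\emph{Parts 2 and 3.} Lemma~\ref{lemma:eq} extends verbatim; its randomization mixes $\nu$ with stopping at $0$ or a restart and uses only linearity in $W$. Lemma~\ref{lemma:lag} extends as well. The Bellman identity follows from Lemma~\ref{lemma:bellman_identity} with $W^{(\xi)}_t$, and the Markov reduction is Lemma~\ref{lemma:ss:general}. Under $\mathbb P_\infty$ the state $\ell^t_{\xi-1}$ is a time-homogeneous Markov chain: shift in the i.i.d.\ $\ell_{t+1}$.

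Optimality of $\tau^\star_{\lambda,\xi}$ for $\mathcal V_\xi(\lambda)$ follows as in Lemma~\ref{lemma:5}, via the Snell envelope. The required integrability is $\mathbb E_\infty\sup_t(W_t^{(\xi)}-\lambda t)<\infty$. This holds because $\mathbb E_\infty W_t^{(\xi)}=\xi$ and $W_t^{(\xi)}$ is stationary with finite mean, so $W_t-\lambda t\to-\infty$ a.s.\ and the supremum has a finite mean. A union bound gives $\sum_t \mathbb P(W_t>\lambda t/2)<\infty$ given the finite mean.

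Finally, choose $\lambda_\varepsilon$ by continuity and monotonicity of the ARL in $\lambda$, randomizing at ties. For any feasible $\tau$, reduce to ARL $\varepsilon$ with the ratio preserved. Then
\[
\mathbb E_\infty[W_\tau]-\lambda_\varepsilon\varepsilon\le \mathbb E_\infty[W_{\tau^\star}]-\lambda_\varepsilon\varepsilon,
\]
and dividing by $\varepsilon$ yields \eqref{eq:average_opt_general} through Lemma~\ref{lemma:average_identity}.
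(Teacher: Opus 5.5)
Your plan for Parts~2 and~3 follows the paper's route: Lemma~\ref{lemma:eq}, Lemma~\ref{lemma:lag}, the Bellman identity, the Markov reduction of Lemma~\ref{lemma:ss:general}, and the Snell envelope as in Lemma~\ref{lemma:5}. Your weighted-average argument for Part~1 is also the paper's, and it is correct for $s=1$. There the pre-change law is $\mathbb{P}_\infty$, so the Pollak term at onset $\gamma$ equals $p^\xi_\gamma(\tau)$, and $\inf_\gamma p^\xi_\gamma(\tau)\le\sum_\gamma w_\gamma(\tau)p^\xi_\gamma(\tau)=\mathcal{A}_\xi(\tau)$.

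\textbf{The gap is the step to $s\ge2$, which you flag but do not close.}
\begin{itemize}
\item Bounding each summand's infimum by $\mathcal{A}_\xi(\tau)$ does not bound the infimum of the sum, because $\inf_\theta\sum_i p_i(\theta)\ge\sum_i\inf_\theta p_i(\theta)$ goes the wrong way.
\item For $i\ge2$ the Pollak term averages the conditional success probability over the pre-change history under $\mathbb{P}_\theta$. That history contains the $F_1$-samples of earlier episodes, so the term is not $p^\xi_{\gamma_i}(\tau)$. Note also that dividing by $\mathbb{P}_\infty(\tau\ge\gamma\mid\mathcal{F}_{\gamma-1})=\mathbf{1}_{\{\tau\ge\gamma\}}$ produces the Lorden integrand, not the Pollak term.
\item The onsets must be increasing with gaps of at least $\xi+1$. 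So they cannot all be placed where the success probability is small. Averaging against a product of the weights $w_\gamma(\tau)$ also fails, since independent draws violate ordering and separation.
\end{itemize}

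\textbf{In fact the bound $\mathcal{L}_{\rm P}(\tau)\le s\,\mathcal{A}_\xi(\tau)$ is false for $s\ge2$.} Take $\xi=s=2$. Using the time-zero randomization allowed in $\mathcal{T}$, let $\tau=4+G$ with $\mathbb{P}(G=k)=(1-\eta)\eta^{k-1}$ for $k\ge1$, independent of the data.
\begin{itemize}
\item Whatever the pre-change law, the window-success probability at onset $\gamma$ is $0$ for $\gamma\le3$, $1-\eta$ for $\gamma=4$, and $1-\eta^2$ for $\gamma\ge5$.
\item Every configuration has $\gamma_2\ge\gamma_1+3\ge4$, and $(\gamma_1,\delta_1,\gamma_2)=(1,2,4)$ is admissible. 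Hence $\mathcal{L}_{\rm P}(\tau)=1-\eta$.
\item On the other hand, $\mathbb{E}_\infty[W^{(2)}_\tau]=2$ and $\mathbb{E}_\infty[\tau]=4+1/(1-\eta)$, so $2\mathcal{A}_2(\tau)=4(1-\eta)/(4(1-\eta)+1)$.
\item This is strictly less than $1-\eta$ for every $\eta<1/4$.
\end{itemize}

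The paper's own justification for this step is Step~3 of Appendix~\ref{app:thm:upper_bound2}, to which the general case is referred. It is the same summand-wise remark, so your proposal inherits this defect rather than introducing it. What can actually be proved is the $s=1$ bound. Parts~2--3 concern only $\mathcal{A}_\xi$ and are unaffected.

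\textbf{Separately, your integrability claim for the Snell envelope is not justified.} Almost-sure divergence of $W^{(\xi)}_t-\lambda t$ to $-\infty$, together with a finite mean, does not give $\mathbb{E}_\infty\sup_t(W^{(\xi)}_t-\lambda t)^+<\infty$. Since $W^{(\xi)}_t\ge\ell_t$ with $\ell_t$ i.i.d., that expectation is infinite whenever $\mathbb{E}_\infty[\ell_1^2]=\infty$. The paper's check in Lemma~\ref{lemma:5}, $\sup_t\mathbb{E}_\infty[Y_t^+]<\infty$, is weaker still. So this step needs a moment assumption or a separate argument in either version.
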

Hence, following the same line of arguments as in
Section~\ref{sec:Xi2}, the statements of
Theorem~\ref{thm:general} collectively establish that the stopping
rule specified in~\eqref{eq:st:general} is an \textcolor{black}{exactly optimal solution to
\eqref{eq:average_optimization_problem} for every $\xi\geq2$. For the single-opportunity worst-case criteria, the same theorem yields}
\begin{align}
\mathcal{L}_{\rm L}(\tau_\xi^\star)
\leq
\mathcal{L}_{\rm P}(\tau_\xi^\star)
\leq
s\cdot \mathcal{A}_\xi(\tau_\xi^\star)\ .
\end{align}
In particular, the \textcolor{black}{exact average-optimal} test compares the statistic
$W_t^{(\xi)}$, which is a function of the most recent $\xi$
likelihood ratios $\ell_\xi^t$, with the
\textcolor{black}{state-dependent continuation boundary}
\begin{align}
-\lambda+R_\xi(\ell_{\xi-1}^t)\ .
\end{align}
The \textcolor{black}{boundary} depends on the most recent
$\xi-1$ likelihood ratios through $\ell_{\xi-1}^t$.
We discuss the computational aspect of finding $R_\xi(\cdot)$ in
Section~\ref{sec:comp}.

\section{\textcolor{black}{Probability-maximizing Detection: Admissible Window $\xi=1$}}
\label{sec:Xi1}

In this section, we treat the case of $\xi=1$,
\textcolor{black}{i.e., the one-sample admissible-window setting.}
Since
\begin{align}
\{\gamma_i\leq\tau<\gamma_i+1\}
=
\{\tau=\gamma_i\}\ ,
\end{align}
a \textcolor{black}{decision is credited as successful for onset $\gamma_i$ only when}
$\tau=\gamma_i$.
This setting is treated independently since the analysis has important differences,
\textcolor{black}{and it provides the one-sample benchmark from which the general TSR rule is extended.}
The optimal test simplifies to the well-known Shewhart test,
\textcolor{black}{recovering the exact solution established for the one-sample probability-maximizing minimax problem in~\cite{Moustakides:2014}.}

{\color{black}We note that although the resulting Shewhart rule uses only the current observation, this is a conclusion of a sequential optimization problem rather than a reduction to an isolated hypothesis test. The onset time is unknown, observations arrive successively, and at every time the detector chooses between stopping and preserving the opportunity to act later. These decisions are coupled through the ARL constraint, and the optimization ranges over all admissible stopping times, including rules that use the complete observation history. Shewhart optimality therefore establishes that past samples have no value for the one-sample probability-maximizing objective; when $\xi>1$, the optimal rule instead acquires the finite memory characterized in Sections~\ref{sec:Xi2} and~\ref{sec:Xi}.

In this one-sample case, the survival-weighted average criterion and the established Pollak- and Lorden-type minimax criteria are optimized by the same rule. Thus, unlike the setting $\xi\geq2$, the distinction between exact finite-sample average optimality and worst-case minimax optimality disappears when $\xi=1$.} Specifically, in this setting, the optimal test involves a one-sample likelihood-ratio test.
Formally, at each time $t$ and based on the observation $X_t$, we form the likelihood-ratio value
$\ell_t$ defined in~\eqref{eq:likelihood_ratio}.
The Shewhart test compares $\ell_t$ with a fixed threshold $\alpha$ and declares a change when
$\ell_t$ exceeds $\alpha$. The main difference between the optimal test in the settings $\xi=1$ and $\xi\geq2$ is that,
when $\xi=1$, the optimal test compares the most recent likelihood ratio with a time-invariant
threshold, i.e., it is the Shewhart test. When $\xi\geq2$, the optimal test instead compares a
function of the most recent $\xi$ likelihood ratios with a
\textcolor{black}{state-dependent continuation boundary}
that depends on the most recent $(\xi-1)$ likelihood ratios.
The stopping time of the Shewhart test is
\begin{align}\label{eq:stop}
\tau_{\rm s}
\dff
\inf\,\{t\geq1:\ell_t\geq\alpha\}
=
\mathscr{T}(\alpha)\ .
\end{align}
The value of the threshold $\alpha$ is chosen such that the average run length to a false alarm
is guaranteed not to be smaller than $\varepsilon$, i.e.,
$\mathbb{E}_\infty[\tau_{\rm s}]\geq\varepsilon$.
Under $\mathbb{P}_\infty$, $\{\ell_t\}_{t\in\mathbb{N}}$ are i.i.d., so
$\tau_{\rm s}$ is geometric with parameter
$\mathbb{P}_\infty(\ell_1\geq\alpha)$, and hence
\begin{align}
\mathbb{E}_\infty[\tau_{\rm s}]
=
\frac{1}{\mathbb{P}_\infty(\ell_1\geq\alpha)}\ .
\end{align}
Accordingly, a convenient choice is to enforce the ARL constraint with equality, and
$\alpha$ can be computed by solving
\begin{align}\label{eq:threshold}
\mathbb{P}_\infty(\ell_1\geq\alpha)
=
\varepsilon^{-1}\ .
\end{align}
{\color{black}{Relationship in~\eqref{eq:threshold} assumes that the desired probability can be attained by a deterministic threshold. If the distribution of $\ell_1$ under $\mathbb{P}_\infty$ has an atom at the threshold, the Shewhart rule may instead randomize on the event $\{\ell_1=\alpha\}$ so that its one-step stopping probability is exactly $\varepsilon^{-1}$. For notational simplicity, we use the deterministic form below. Since $W_t^{(1)}=\ell_t$, Lemma~\ref{lemma:average_identity} yields
\begin{align}\label{eq:average_xi1}
\mathcal{A}_1(\tau)
=
\frac{\mathbb{E}_\infty[\ell_\tau]}
{\mathbb{E}_\infty[\tau]}\ .
\end{align}}}
An argument analogous to the one used to obtain
Theorem~\ref{thm:upper_bound2} yields the following $\xi=1$ upper bound:
\textcolor{black}{for any fixed $s\geq1$ and any}
$\tau\in\mathcal{T}$ with
$0<\mathbb{E}_\infty[\tau]<\infty$,
\begin{align}\label{eq:upper_xi1}
\mathcal{L}_{\rm L}(\tau)
\leq
\mathcal{L}_{\rm P}(\tau)
\leq
s\cdot
\frac{\mathbb{E}_\infty[\ell_\tau]}
{\mathbb{E}_\infty[\tau]}
\textcolor{black}{=s\,\mathcal{A}_1(\tau)}\ .
\end{align}
In this section, we establish the optimality of the Shewhart test formalized
in~\eqref{eq:stop} and~\eqref{eq:threshold} \textcolor{black}{for the average-success problem in~\eqref{eq:average_optimization_problem}, as well as} for the problems in
\eqref{eq:p1} and~\eqref{eq:p2}.
We leverage Lemma~\ref{lemma:eq} and prove the following properties:

\begin{enumerate}
\item The Shewhart test is feasible.
\item It maximizes \textcolor{black}{$\mathcal{A}_1(\tau)$ and, equivalently,} the upper bound on
$\mathcal{L}_{\rm P}(\tau)$ and
$\mathcal{L}_{\rm L}(\tau)$ in~\eqref{eq:upper_xi1},
subject to the false-alarm constraint.
\item For the Shewhart test, the
\textcolor{black}{performance criteria}
$\mathcal{L}_{\rm P}(\tau)$ and
$\mathcal{L}_{\rm L}(\tau)$
\textcolor{black}{attain}
the maximized upper bound\textcolor{black}{, so the rule is simultaneously exact average-optimal and exact minimax-optimal}.
\end{enumerate}

These properties are formalized in the following lemma and two theorems.

\begin{lemma}[Feasibility of Shewhart]\label{lemma:feasible}
The Shewhart stopping time $\tau_{\rm s}$ with threshold $\alpha$ chosen according
to~\eqref{eq:threshold} satisfies
$\mathbb{E}_\infty[\tau_{\rm s}]=\varepsilon$,
and hence is feasible for~\textcolor{black}{\eqref{eq:average_optimization_problem} and for} \eqref{eq:p1}--\eqref{eq:p2}.
\end{lemma}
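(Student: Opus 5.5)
The plan is to compute the law of $\tau_{\rm s}$ under $\mathbb{P}_\infty$ directly and then check it against each of the three feasibility constraints. Under $\mathbb{P}_\infty$ the observations $X_1,X_2,\dots$ are i.i.d. with density $f_0$, so the likelihood ratios $\ell_t=f_1(X_t)/f_0(X_t)$ are i.i.d. as well. Put $p\dff\mathbb{P}_\infty(\ell_1\geq\alpha)$. The event $\{\tau_{\rm s}>t\}$ equals $\bigcap_{k=1}^t\{\ell_k<\alpha\}$, and independence gives $\mathbb{P}_\infty(\tau_{\rm s}>t)=(1-p)^t$ for every $t\in\mathbb{N}_0$. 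Hence $\tau_{\rm s}$ is geometric on $\mathbb{N}$ with success probability $p$.

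The ARL then follows from the tail-sum formula:
\begin{align}
\mathbb{E}_\infty[\tau_{\rm s}]=\sum_{t=0}^{\infty}\mathbb{P}_\infty(\tau_{\rm s}>t)=\sum_{t=0}^\infty(1-p)^t=\frac1p\ ,
\end{align}
which holds whenever $p>0$. By the choice of $\alpha$ in~\eqref{eq:threshold}, $p=\varepsilon^{-1}\in(0,1]$ because $\varepsilon\geq1$. Therefore $\mathbb{E}_\infty[\tau_{\rm s}]=\varepsilon$, which is finite and at least one.

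Feasibility follows at once. The rule $\tau_{\rm s}$ is an $\{\F_t\}$-stopping time because $\{\tau_{\rm s}=t\}$ depends only on $\ell_1,\dots,\ell_t$. It is $\mathbb{P}_\infty$-a.s. finite, and it meets $\mathbb{E}_\infty[\tau_{\rm s}]\geq\varepsilon$ with equality. It is therefore admissible in~\eqref{eq:average_optimization_problem}, \eqref{eq:p1} and~\eqref{eq:p2}. The condition $0<\mathbb{E}_\infty[\tau]<\infty$, which is needed for $\mathcal{A}_1$ and~\eqref{eq:upper_xi1} to be defined, also holds.

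The only real difficulty is whether a deterministic threshold can achieve exactly $p=\varepsilon^{-1}$ when $\ell_1$ has atoms. In that case I would use the randomized version mentioned after~\eqref{eq:threshold}. Choose $\alpha$ so that $\mathbb{P}_\infty(\ell_1>\alpha)\leq\varepsilon^{-1}\leq\mathbb{P}_\infty(\ell_1\geq\alpha)$. At each time $t$, stop on $\{\ell_t=\alpha\}$ with a fixed probability $\rho\in[0,1]$ chosen so that the per-step stopping probability equals exactly $\varepsilon^{-1}$. For these per-step decisions to remain i.i.d., the coin flips have to come from an i.i.d. sequence derived from the auxiliary variable $B$, for example its binary digits. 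The per-step stopping events are then still i.i.d., the geometric computation goes through unchanged, and $\mathbb{E}_\infty[\tau_{\rm s}]=\varepsilon$ holds in general.
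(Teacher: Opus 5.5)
Your proposal is correct and follows the paper's proof: both identify $\tau_{\rm s}$ under $\mathbb{P}_\infty$ as geometric with parameter $p=\mathbb{P}_\infty(\ell_1\ge\alpha)$ and apply the tail-sum formula to obtain $\mathbb{E}_\infty[\tau_{\rm s}]=1/p=\varepsilon$. You also handle atoms in the law of $\ell_1$ by per-step randomization driven by $B$; this is a valid addition that the paper mentions only in the text after \eqref{eq:threshold}, not in its proof.
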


\begin{proof}
See Appendix~\ref{app:lemma:feasible}.
\end{proof}
In the next theorem, we establish that the Shewhart test maximizes the upper bound
{\color{black}{in~\eqref{eq:upper_xi1}, which is exactly the average criterion up to the factor $s$}}.

\begin{theorem}\label{lemma:upper_bound}
The Shewhart test is a solution to
\begin{align}\label{eq:p3}
\sup_{\tau\in\mathcal{T}}
\quad &
\frac{\mathbb{E}_\infty[\ell_\tau]}
{\mathbb{E}_\infty[\tau]}
\qquad
\text{\rm s.t.}
\qquad
\mathbb{E}_\infty[\tau]
=
\varepsilon\ .
\end{align}
\textcolor{black}{Equivalently, it solves}
\begin{align}
\textcolor{black}{
\sup_{\tau\in\mathcal{T}}
\quad
\mathcal{A}_1(\tau)
\qquad
\text{\rm s.t.}
\qquad
\mathbb{E}_\infty[\tau]
=
\varepsilon\ .}
\label{eq:p3_average}
\end{align}
\end{theorem}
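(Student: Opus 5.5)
Since the constraint fixes the denominator at $\mathbb{E}_\infty[\tau]=\varepsilon$, I would reduce \eqref{eq:p3} to maximizing $\mathbb{E}_\infty[\ell_\tau]$ over $\tau\in\mathcal{T}$ with $\mathbb{E}_\infty[\tau]=\varepsilon$. I would then prove a universal linear upper bound in $\mathbb{E}_\infty[\tau]$ and show that $\tau_{\rm s}$ attains it. Because $\mathcal{A}_1(\tau)=\mathbb{E}_\infty[\ell_\tau]/\mathbb{E}_\infty[\tau]$ by \eqref{eq:average_xi1}, this also gives \eqref{eq:p3_average}. This avoids the general Bellman machinery of Section~\ref{sec:Xi2}: for $\xi=1$ the continuation value turns out to be a constant.

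\textbf{Step 1 (pathwise decomposition).} Fix the threshold $\alpha$ of \eqref{eq:threshold} and set $c\dff\mathbb{E}_\infty[(\ell_1-\alpha)^+]$. For any $\tau\in\mathcal{T}$ with $\mathbb{P}_\infty(\tau<\infty)=1$, the convention $\ell_0=0$ gives
\begin{align}
\ell_\tau-\alpha=-\alpha\,\mathds{1}\{\tau=0\}+\sum_{t\geq1}(\ell_t-\alpha)\,\mathds{1}\{\tau=t\}
\leq\sum_{t\geq1}(\ell_t-\alpha)^+\,\mathds{1}\{\tau\geq t\}\ .
\end{align}
The right-hand side has nonnegative terms, so expectations and sums may be interchanged by Tonelli.

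\textbf{Step 2 (independence).} The event $\{\tau\geq t\}$ is $\F_{t-1}\vee\sigma(B)$-measurable. Under $\mathbb{P}_\infty$, $\ell_t$ is independent of this $\sigma$-field: the $\ell_t$ are i.i.d. and the randomization $B$ is independent of the observations. Hence
\begin{align}
\mathbb{E}_\infty\big[(\ell_t-\alpha)^+\mathds{1}\{\tau\geq t\}\big]=c\,\mathbb{P}_\infty(\tau\geq t)\ .
\end{align}
Summing over $t$ with the tail-sum identity yields
\begin{align}
\mathbb{E}_\infty[\ell_\tau]\leq\alpha+c\,\mathbb{E}_\infty[\tau]=\alpha+c\,\varepsilon\ ,
\end{align}
and this holds for every feasible $\tau$. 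The bound also shows $\mathbb{E}_\infty[\ell_\tau]<\infty$, so the subtraction of $\alpha$ is legitimate.

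\textbf{Step 3 (attainment).} For $\tau_{\rm s}$ the inequality in Step 1 is an equality pathwise:
\begin{itemize}
\item $\tau_{\rm s}\geq1$, so the $t=0$ term vanishes;
\item at $t=\tau_{\rm s}$ we have $\ell_t\geq\alpha$, so $(\ell_t-\alpha)=(\ell_t-\alpha)^+$;
\item at every $t<\tau_{\rm s}$ we have $\ell_t<\alpha$, so $(\ell_t-\alpha)^+=0$.
\end{itemize}
By Lemma~\ref{lemma:feasible}, $\mathbb{E}_\infty[\tau_{\rm s}]=\varepsilon<\infty$, which ensures $\tau_{\rm s}<\infty$ a.s. Hence $\mathbb{E}_\infty[\ell_{\tau_{\rm s}}]=\alpha+c\,\varepsilon$, and $\tau_{\rm s}$ attains the supremum in \eqref{eq:p3}.

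The main subtlety is the atom case, where \eqref{eq:threshold} requires randomizing on $\{\ell_t=\alpha\}$. That randomization must be independent across time and of the observations, which may need more than the single time-zero variable $B$. I would handle it by using an i.i.d. sequence of uniforms generated from $B$. The equality in Step 3 survives because on $\{\ell_t=\alpha\}$ both $\ell_t-\alpha$ and $(\ell_t-\alpha)^+$ vanish, whether or not the rule stops there. The independence used in Step 2 also survives, since each stopping indicator depends only on past observations and past uniforms.
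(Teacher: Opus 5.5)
Your proof is correct, but it takes a different route from the paper.

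\emph{The paper's route.} It solves the Lagrangian problem $\mathcal{V}_1(\lambda)=\sup_\tau\mathbb{E}_\infty[\ell_\tau-\lambda\tau]$ by dynamic programming. It invokes the Bellman identity and uses the i.i.d.\ structure to show that the continuation value is a constant $c_\lambda$. It then reads off a Shewhart rule with threshold $c_\lambda-\lambda$, tunes $\lambda=\mathbb{E}_\infty[\max\{\ell_1,\alpha\}]-\alpha$ so that this threshold equals $\alpha$, and concludes by weak duality.

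\emph{How yours relates.} That multiplier is exactly your $c=\mathbb{E}_\infty[(\ell_1-\alpha)^+]$. Your inequality $\mathbb{E}_\infty[\ell_\tau-c\tau]\le\alpha$ for all $\tau$, with equality at $\tau_{\rm s}$, says precisely that $\mathcal{V}_1(c)=\alpha$ and that $\tau_{\rm s}$ attains it. So you carry out the verification step directly, replacing the optimal-stopping machinery with a pathwise domination and a Wald-type independence identity.

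\emph{What your route buys.}
\begin{itemize}
\item It needs no appeal to the Snell-envelope theorem or to ``standard optimal stopping arguments.''
\item It avoids the implicit claim that $c_\lambda$ is the unique solution of $c=\mathbb{E}_\infty[\max\{\ell_1,c-\lambda\}]$. The paper needs that claim when it identifies $c_\lambda$ with $\mathbb{E}_\infty[\max\{\ell_1,\alpha\}]$.
\item It treats the atom case explicitly, which the paper sets aside.
\item Your bound reads $\mathbb{E}_\infty[\ell_\tau]/\mathbb{E}_\infty[\tau]\le c+\alpha/\mathbb{E}_\infty[\tau]$. Using $\alpha\ge0$, the problem with the inequality constraint $\mathbb{E}_\infty[\tau]\ge\varepsilon$ follows at once, without Lemma~\ref{lemma:eq}.
\end{itemize}

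\emph{What the paper's route buys.} It derives the Shewhart structure rather than positing it. It also runs parallel to the $\xi\ge2$ analysis, where no closed-form verification function is available.

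\emph{Two small points to state explicitly.} First, dropping the term $-\alpha\,\mathbf{1}\{\tau=0\}$ in Step~1 requires $\alpha\ge0$. This is without loss of generality since $\ell_1\ge0$; for $\varepsilon=1$, take $\alpha=0$. Second, extracting an i.i.d.\ uniform sequence from $B$ presumes that $B$ has a continuous law. The paper leaves the law of $B$ unspecified, so you may take $B$ uniform, but you should say so.
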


\begin{proof}
See Appendix~\ref{app:lemma:upper_bound}.
\end{proof}
The following theorem proves that, for the Shewhart test, the
\textcolor{black}{performance criteria attain their common upper bound}
and, consequently, that the Shewhart test is an optimal solution to
\textcolor{black}{the average-success problem~\eqref{eq:average_optimization_problem} and to}
\eqref{eq:p1} and~\eqref{eq:p2} for $\xi=1$.

\begin{theorem}[Optimality of Shewhart for $\xi=1$]\label{th:Sh}
\textcolor{black}{Fix $s\geq1$ and take the infima defining
$\mathcal{L}_{\rm P}$ and $\mathcal{L}_{\rm L}$ over $\Theta_s(1)$.}
The Shewhart test with the stopping time and threshold given in
\eqref{eq:stop} and~\eqref{eq:threshold}, respectively, is an optimal
solution to \textcolor{black}{the survival-weighted average problem and to} both problems in~\eqref{eq:p1} and~\eqref{eq:p2}, i.e.,
\begin{align}
\textcolor{black}{\mathcal{A}_1(\tau_{\rm s})}
&=
\textcolor{black}{
\sup_{\tau\in\mathcal{T}:\,
\mathbb{E}_\infty[\tau]\geq\varepsilon}
\mathcal{A}_1(\tau)}
\label{eq:xi1_average_optimality}
\\
\mathcal{L}_{\rm L}(\tau_{\rm s})
&=
\mathcal{L}_{\rm P}(\tau_{\rm s})
\textcolor{black}{=s\,\mathcal{A}_1(\tau_{\rm s})}
=
\sup_{\tau\in\mathcal{T}:\,
\mathbb{E}_\infty[\tau]\geq\varepsilon}
\mathcal{L}_{\rm L}(\tau)
=
\sup_{\tau\in\mathcal{T}:\,
\mathbb{E}_\infty[\tau]\geq\varepsilon}
\mathcal{L}_{\rm P}(\tau)\ .
\label{eq:xi1_minimax_optimality}
\end{align}
\end{theorem}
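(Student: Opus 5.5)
We will combine the three ingredients already established: feasibility (Lemma~\ref{lemma:feasible}), optimality of Shewhart for the ratio problem with ARL equality (Theorem~\ref{lemma:upper_bound}), and the universal upper bound~\eqref{eq:upper_xi1}. The only new computation is to show that the Pollak- and Lorden-type criteria of $\tau_{\rm s}$ actually attain $s\,\mathcal{A}_1(\tau_{\rm s})$.

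\emph{Average optimality~\eqref{eq:xi1_average_optimality}.} Take any $\tau$ with $\mathbb{E}_\infty[\tau]\geq\varepsilon$. If $\mathbb{E}_\infty[\tau]=\infty$, we handle it separately: either we note that $\mathcal{A}_1$ is only defined for finite ARL, or we show that it is dominated through a truncation argument. Otherwise, Lemma~\ref{lemma:eq} (with $W^{(1)}=\ell$ replacing $W^{(2)}$; its proof only mixes with a randomized rule and does not use the form of the reward) gives $\nu'$ with $\mathbb{E}_\infty[\nu']=\varepsilon$ and the same ratio. By Lemma~\ref{lemma:average_identity}, that ratio equals $\mathcal{A}_1$. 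Theorem~\ref{lemma:upper_bound} then gives $\mathcal{A}_1(\tau)=\mathcal{A}_1(\nu')\leq\mathcal{A}_1(\tau_{\rm s})$, and Lemma~\ref{lemma:feasible} shows that $\tau_{\rm s}$ is feasible.

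\emph{Attainment for the worst-case criteria.} First we compute $s\,\mathcal{A}_1(\tau_{\rm s})$. Write $q=\mathbb{P}_\infty(\ell_1\geq\alpha)=\varepsilon^{-1}$. Wald's identity gives
\[
\mathbb{E}_\infty[\ell_{\tau_{\rm s}}]=\mathbb{E}_\infty[\ell_1\mid\ell_1\geq\alpha]=\mathbb{E}_\infty[\ell_1\mathbb{1}\{\ell_1\geq\alpha\}]/q=\mathbb{P}_1(\ell_1\geq\alpha)/q.
\]
Hence $\mathcal{A}_1(\tau_{\rm s})=\mathbb{P}_1(\ell_1\geq\alpha)$, where $\mathbb{P}_1$ is the law of $\ell_1$ under $F_1$. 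If randomization at an atom is used, the same computation holds with the randomized indicator.

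Next, fix any $\theta\in\Theta_s(1)$ and any $i$. The event $\{\tau_{\rm s}\geq\gamma_i\}$ lies in $\F_{\gamma_i-1}$, and $X_{\gamma_i}\sim F_1$ is independent of $\F_{\gamma_i-1}$. Therefore, on $\{\tau_{\rm s}\geq\gamma_i\}$,
\[
\mathbb{P}_\theta(\tau_{\rm s}=\gamma_i\mid\F_{\gamma_i-1},\tau_{\rm s}\geq\gamma_i)=\mathbb{P}_1(\ell_1\geq\alpha)\quad\text{a.s.}
\]
Here we must check that $\mathbb{P}_\theta(\tau_{\rm s}\geq\gamma_i)>0$. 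This holds provided $q<1$, which is true when $\varepsilon>1$; the case $\varepsilon=1$ is trivial. Because this conditional probability is constant, its essential infimum equals it, and the Pollak term equals it as well. Summing over $i$ and taking the infimum over $\theta$ gives
\[
\mathcal{L}_{\rm L}(\tau_{\rm s})=\mathcal{L}_{\rm P}(\tau_{\rm s})=s\,\mathbb{P}_1(\ell_1\geq\alpha)=s\,\mathcal{A}_1(\tau_{\rm s}).
\]
Note that earlier episodes change the survival probability but not this conditional probability. This is the key feature of $\xi=1$, and it fails for $\xi\geq2$.

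\emph{Minimax optimality~\eqref{eq:xi1_minimax_optimality}.} For any feasible $\tau$ with finite ARL, chain~\eqref{eq:upper_xi1} with the average optimality above:
\[
\mathcal{L}_{\rm L}(\tau)\leq\mathcal{L}_{\rm P}(\tau)\leq s\,\mathcal{A}_1(\tau)\leq s\,\mathcal{A}_1(\tau_{\rm s})=\mathcal{L}_{\rm L}(\tau_{\rm s}).
\]
This yields both suprema.

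\emph{Main obstacle.} We expect the delicate point to be stopping times with $\mathbb{E}_\infty[\tau]=\infty$, since neither~\eqref{eq:upper_xi1} nor Lemma~\ref{lemma:eq} covers them. We would first try truncating, $\tau\wedge n$. The criteria are conditional probabilities of the event $\{\tau=\gamma_i\}$ and change little under truncation for large $n$ when $\theta$ is fixed, but the infimum over $\theta$ prevents a direct limit. We would therefore instead bound each summand for a fixed worst-case $\theta$ using $\mathcal{A}_1(\tau\wedge n)\leq\mathcal{A}_1(\tau_{\rm s})$ along an $n$ exceeding all $\gamma_i$. A secondary check is that the essential infimum in the Lorden criterion is interpreted on $\{\tau\geq\gamma_i\}$.
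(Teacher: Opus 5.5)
Your proposal is correct and follows the paper's proof essentially step for step. The paper likewise shows that the conditional success probability of $\tau_{\rm s}$ is the constant $\mathbb{P}_1(\ell_1\ge\alpha)$, uses $\mathbb{E}_\infty[\ell_{\tau_{\rm s}}]=\varepsilon\,\mathbb{P}_1(\ell_1\ge\alpha)$ to identify $s\,\mathcal{A}_1(\tau_{\rm s})$, and closes with the chain through~\eqref{eq:upper_xi1}, Lemma~\ref{lemma:eq}, and Theorem~\ref{lemma:upper_bound}; the infinite-ARL and positive-survival points you raise are ones the paper passes over silently. One small correction: for $i\geq2$, positive survival $\mathbb{P}_\theta(\tau_{\rm s}\geq\gamma_i)>0$ also needs $\mathbb{P}_1(\ell_1<\alpha)>0$, because the earlier episodes' samples must be able to pass without an alarm, so $q<1$ alone is not sufficient.
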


\begin{proof}
\textcolor{black}{The average-optimality statement follows from Lemma~\ref{lemma:eq}, Theorem~\ref{lemma:upper_bound}, and the identity~\eqref{eq:average_xi1}. The minimax equalities and optimality statements are proved in} Appendix~\ref{app:th:Sh}.
\end{proof}
Finally, the Shewhart test is not only optimal but also simple to implement:
at each time $t$, it computes the likelihood ratio $\ell_t$ and compares it
with the fixed threshold $\alpha$, stopping and declaring a change at the first
time $\ell_t\geq\alpha$.
\textcolor{black}{Thus, the one-sample member of the TSR family is precisely the classical Shewhart rule: the finite-memory statistic reduces to the current likelihood ratio, and the state-dependent continuation boundary reduces to a constant threshold.}
\textcolor{black}{The one-sample setting is therefore the boundary case in which the exact average-optimal rule also attains the Pollak- and Lorden-type worst-case optima at finite $\varepsilon$.}

\section{\textcolor{black}{Structure, Computation, and Complexity}}\label{sec:comp}

{\color{black}For $\xi\geq2$, the exact average-optimal TSR stopping rule in}
Theorem~\ref{thm:general} can be written as
\begin{align}\label{eq:tau_opt_xi_comp}
{\tau_{\lambda,\xi}^\star}
\dff
\inf\Bigg\{
t\ge 1:
{W_t^{(\xi)}}
\ge
-\lambda+
{R_\xi}\big({\ell_{\xi-1}^{\,t}}\big)
\Bigg\}\ .
\end{align}
Here $\lambda=\lambda_\varepsilon$ is chosen so that
$\mathbb{E}_\infty[\tau_{\lambda_\varepsilon,\xi}^\star]
=\varepsilon$, and we write
$\tau_\xi^\star\dff
\tau_{\lambda_\varepsilon,\xi}^\star$.
Note that, for $\xi=2$, $R_\xi$ reduces to the scalar continuation
function $R:[0,\infty)\to\mathbb{R}$ used below. Here
\begin{align}
\ell_{\xi-1}^{\,t}
=
(\ell_{t-\xi+2},\ldots,\ell_t)
\end{align}
and
\begin{align}\label{eq:stop_reward_xi}
{W_t^{(\xi)}}
\dff
\sum_{k=1}^{\xi}
\prod_{{j}=0}^{k-1}\ell_{t-{j}}
=
\ell_t
\Bigg(
1+
\sum_{k=1}^{\xi-1}
\prod_{{j}=1}^{k}\ell_{t-{j}}
\Bigg)\ .
\end{align}
\textcolor{black}{The online implementation consists of two distinct
operations: updating the TSR statistic
$W_t^{(\xi)}$ and evaluating the state-dependent continuation
boundary.}
Implementation of~\eqref{eq:tau_opt_xi_comp} also requires evaluating
the \textcolor{black}{continuation value}
$R_\xi(\ell_{\xi-1}^{\,t})$, which can be computed \emph{offline} by
dynamic programming and stored for real-time use.

{\color{black}This separation makes the scalability of the method explicit. Online implementation requires $O(\xi)$ arithmetic operations and $O(\xi)$ memory, while for $\xi=2$ it reduces to one scalar state-dependent threshold evaluation using only the current and previous likelihood ratios. The principal limitation is offline: a direct tensor-grid representation with $M$ points per state coordinate has order $M^{\xi-1}$ states, before quadrature and interpolation. Thus, exact boundary computation is practical for small windows, especially $\xi=2$ and potentially $\xi=3$; substantially larger windows require sparse grids, function approximation, or approximate dynamic programming.}

\paragraph{Structural Properties of the Value Functions.}
The following property is the key enabler for an efficient
offline--online implementation.
For fixed past likelihood ratios
$\ell_{t-\xi+1}^{\,t-1}$, the immediate stopping reward
$W_t^{(\xi)}$ in~\eqref{eq:stop_reward_xi} is \emph{affine} in the
current likelihood ratio $\ell_t$.
Moreover, for fixed $\ell_{t-\xi+2}^{\,t-1}$, the continuation value
$R_\xi(\ell_{\xi-1}^{\,t})$ is \emph{nondecreasing} and
\emph{convex} in $\ell_t$.

\begin{lemma}[Monotonicity and convexity]
\label{lemma:mono_convex}
Fix $\lambda>0$ and $\xi\ge 2$. For any fixed values of the past
likelihood ratios $\ell_{t-\xi+1}^{\,t-1}$, the reward-to-go
$J_t^{\xi}$ and the continuation value $R_t^{\xi}$ are
nondecreasing and convex functions of the current likelihood ratio
$\ell_t$, when holding
$\ell_{t-\xi+1}^{\,t-1}$ fixed.
\end{lemma}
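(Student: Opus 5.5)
The approach is value iteration on the Markov state. Lemma~\ref{lemma:ss:general} reduces the problem to a time-homogeneous Markov optimal stopping problem, so I would prove both properties for finite-horizon approximations and then pass to the limit.

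Write the state as $x=(x_1,\dots,x_\xi)=\ell_\xi^t$, with $x_\xi=\ell_t$ the current ratio. The reward is $W(x)=x_\xi\big(1+\sum_{k=1}^{\xi-1}\prod_{j=1}^{k}x_{\xi-j}\big)$. For fixed past coordinates, $W$ is affine in $x_\xi$ with nonnegative slope, so it is nondecreasing and convex. Define the truncated operators
\begin{align*}
J^{(0)}(x)&=W(x), \\
R^{(n)}(x_2,\dots,x_\xi)&=\mathbb{E}_\infty\big[J^{(n)}(x_2,\dots,x_\xi,\ell)\big], \\
J^{(n+1)}(x)&=\max\{W(x),-\lambda+R^{(n)}(x_2,\dots,x_\xi)\},
\end{align*}
where $\ell$ is an independent copy of $\ell_1$ under $\mathbb{P}_\infty$. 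Then $J^{(n)}$ is the value of the problem restricted to stopping within $n$ further steps.

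The key is a stronger induction hypothesis: for each $n$, $J^{(n)}$ is nondecreasing and convex in \emph{each} coordinate $x_i$ separately, with the other coordinates held fixed. Coordinatewise convexity suffices because every argument only needs convexity along one coordinate at a time.

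The base case holds because $W$ is a polynomial that is affine in each coordinate (multilinear) with nonnegative coefficients on $[0,\infty)^\xi$. For the induction step, fix a coordinate $x_i$ with $i\ge2$. Then $R^{(n)}$ as a function of $x_i$ is an expectation over $\ell$ of functions that are nondecreasing and convex in that same coordinate, now shifted to position $i-1$. Nonnegative mixtures preserve both properties. The dependence on $x_1$ disappears in $R^{(n)}$, which is trivially fine. Finally, the pointwise maximum of two nondecreasing convex functions of $x_i$ is again nondecreasing and convex.

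This is why the hypothesis must cover all coordinates and not just $x_\xi$. The current ratio $\ell_t$ enters $R_\xi(\ell_{\xi-1}^t)$ in the second-to-last slot of the next-stage value, so convexity in $\ell_t$ at one stage requires convexity of $J$ in the earlier coordinates at the next stage. Measurability and integrability are needed to exchange the expectation. These follow from $\mathbb{E}_\infty[\ell]=1$ and from $J^{(n)}$ being bounded by a polynomial of degree at most one in each coordinate; I would check this polynomial bound alongside the induction.

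The main obstacle is the limit step, identifying $\lim_n J^{(n)}$ with $J_\xi$. Monotonicity in $n$ is immediate. The needed fact is that the finite-horizon values increase to the infinite-horizon value. This is standard for optimal stopping with the linear cost $-\lambda\tau$, given that the rewards are integrable and $\sup_t (W_t-\lambda t)$ has finite expectation. I would establish it the same way it is handled in the proof of the Bellman identity, Lemma~\ref{lemma:bellman_identity}, or by citing a truncation argument: for any $\tau$, the truncation $\tau\wedge n$ loses value tending to zero. Pointwise limits of nondecreasing convex functions remain nondecreasing and convex, and monotone convergence transfers the properties to $R_\xi=\mathbb{E}_\infty[J_\xi(\cdot,\ell)]$. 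This proves the lemma for $J_t^{\xi}$ and $R_t^{\xi}$ in $\ell_t$.
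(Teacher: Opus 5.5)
Your proof is correct. It has the same skeleton as the paper's: finite-horizon truncation, backward induction, then a monotone passage to the infinite horizon. It differs, however, at exactly the point that matters.

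The paper's induction hypothesis only asserts convexity and monotonicity in the \emph{current} coordinate, i.e., of $\ell_{t+1}\mapsto J^H_{t+1}$ with the past held fixed. Its induction step, however, invokes convexity and monotonicity of $\ell_t\mapsto J^H_{t+1}(\ell_{t+1};\ell^{t}_{t-\xi+2})$, where $\ell_t$ now occupies a lagged slot. The stated hypothesis says nothing about that coordinate.

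For $\xi=2$ the missing fact is immediate. There $J^H_{t+1}=\max\{\ell_{t+1}(1+\ell_t),-\lambda+R^H_{t+1}(\ell_{t+1})\}$, and the continuation term does not involve $\ell_t$. For $\xi\ge3$, however, $R^H_{t+1}$ does depend on $\ell_t$, so convexity must be propagated through every lagged slot.

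Your coordinatewise hypothesis closes this gap. It is anchored by the multilinearity of $W$ with nonnegative coefficients on $[0,\infty)^\xi$, and it is preserved by the shift-and-average step and by the pointwise maximum. Your version is therefore the rigorous form of the paper's argument. Using separate convexity rather than joint convexity is also the right choice, since terms like $x_\xi x_{\xi-1}$ are not jointly convex.

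You are also right that the paper simply asserts $J^H_t\uparrow J^\xi_t$. For that step, use your truncation route rather than the condition that $\sup_t(W_t-\lambda t)$ has finite expectation. Since $W_t\ge\ell_t$ with $\{\ell_t\}$ i.i.d. under $\mathbb{P}_\infty$, that condition already forces $\mathbb{E}_\infty[\ell_1^2]<\infty$, which is not assumed.

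The truncation argument needs only $W\ge0$. Pointwise,
$W_{\tau\wedge(t+n)}-\lambda(\tau\wedge(t+n)-t)\ge W_\tau\mathbf{1}_{\{\tau\le t+n\}}-\lambda(\tau-t)$.
Monotone convergence then gives $\lim_n J^{(n)}\ge\mathbb{E}_\infty[W_\tau-\lambda(\tau-t)\mid\mathcal{F}_t]$ for every competitor with $\mathbb{E}_\infty[\tau\mid\mathcal{F}_t]<\infty$. The reverse inequality $J^{(n)}\le J^\xi$ is trivial.
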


\begin{proof}
See Appendix~\ref{proof:lemma:mono_convex}.
\end{proof}
{\color{black}The monotonicity has a direct continuation-value interpretation. For $\xi=2$, if the procedure continues from time $t$ to time $t+1$, the current likelihood ratio becomes the lagged component of the next stopping reward,
\begin{align}
W_{t+1}^{(2)}=\ell_{t+1}(1+\ell_t).
\end{align}
Increasing $\ell_t$ therefore increases the next-step stopping reward for every realization of $\ell_{t+1}$; taking the maximum of stopping and continuing and then averaging preserves this pointwise monotonicity. For a general window, the same principle applies because strong current evidence remains in the finite-memory state and contributes to future TSR products until it exits the window. This does not mean that stronger evidence necessarily discourages immediate stopping, since the immediate stopping reward also increases with $\ell_t$. The decision is governed by the difference between the immediate reward and the continuation value; the Lipschitz result below controls how rapidly the latter can grow in the two-sample case.}

{\color{black}Lemma~\ref{lemma:mono_convex} implies that, for each fixed history $\ell_{t-\xi+1}^{\,t-1}$, the difference between the affine stopping reward and the convex continuation value is concave in $\ell_t$. Hence the stopping set is an interval in the current likelihood ratio (possibly empty, unbounded, or degenerate), and its boundary has at most two points.} In the important special case
$\xi=2$, this boundary can be sharpened to a
\textcolor{black}{single state-dependent threshold in the current
likelihood ratio} via the Lipschitz property below, which
substantially simplifies computation.

\paragraph{Single-threshold reduction for $\xi=2$.}
When $\xi=2$, the stopping rule depends on
$(\ell_{t-1},\ell_t)$ and has the form
\begin{align}\label{eq:tau_opt_xi2_comp}
\tau_\lambda^\star
\dff
\inf\Big\{
t\ge 1:
\ell_t+\ell_{t-1}\ell_t
\ge
-\lambda+R(\ell_t)
\Big\}\ .
\end{align}
For each fixed $\ell_{t-1}$, define the scalar equation
\begin{align}\label{eq:threshold_equation_comp}
(1+\ell_{t-1})\,\ell
=
-\lambda+R(\ell)\ .
\end{align}

\begin{lemma}[Lipschitz property of $R$ for $\xi=2$]
\label{lemma:lipschitz_R}
Assume $\xi=2$. Under $\mathbb{P}_\infty$, the function $R$ is
nondecreasing and $1$-Lipschitz: for all
$\ell_2\ge\ell_1\ge0$,
\begin{align}\label{eq:lipschitz_R}
0
\le
R(\ell_2)-R(\ell_1)
\le
\ell_2-\ell_1\ .
\end{align}
\end{lemma}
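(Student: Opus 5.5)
We prove the Lipschitz bound through a pathwise coupling argument on the value function, using the recursive structure from Lemma~\ref{lemma:suff_infinite}. Recall that $R(\ell)=\mathbb{E}_\infty[J(\ell,\ell')]$, where $\ell'$ is an independent copy of $\ell_1$ under $\mathbb{P}_\infty$. Also recall $J(\ell_{t-1},\ell_t)=\max\{\ell_t(1+\ell_{t-1}),-\lambda+R(\ell_t)\}$. Monotonicity of $R$ is already supplied by Lemma~\ref{lemma:mono_convex}, so the content is the upper bound $R(\ell_2)-R(\ell_1)\le \ell_2-\ell_1$.

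The key observation is that the lagged value $\ell$ affects the future only through the very next stopping reward $W_{t+1}=\ell_{t+1}(1+\ell_t)$. From time $t+2$ onward, the state is $(\ell_{t+1},\ell_{t+2})$, which does not involve $\ell_t$. Fix an optimal stopping time $\sigma$ for the problem started from lag value $\ell_2$; this exists by Lemma~\ref{lemma:5}, after shifting time. Apply the same $\sigma$, as a function of the future observations, in the problem with lag value $\ell_1$. The two payoffs coincide on $\{\sigma\ge t+2\}$. On $\{\sigma=t+1\}$ they differ by $\ell_{t+1}(\ell_2-\ell_1)$. Since $\sigma$ is suboptimal for the $\ell_1$ problem,
\begin{align}
R(\ell_2)-R(\ell_1)\le(\ell_2-\ell_1)\,\mathbb{E}_\infty\big[\ell_{t+1}\mathbf 1\{\sigma=t+1\}\big]\le(\ell_2-\ell_1)\,\mathbb{E}_\infty[\ell_{t+1}].
\end{align}
Because $F_1\ll F_0$, we have $\mathbb{E}_\infty[\ell_{t+1}]=\int f_1\,d\mu=1$, which gives the bound.

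To avoid relying on attainment, the argument can instead be run on the Bellman form directly. Write $J(\ell_2,\ell')-J(\ell_1,\ell')$. The continuation branch $-\lambda+R(\ell')$ is identical in both terms. Using $|\max\{a,c\}-\max\{b,c\}|\le|a-b|$, the difference is at most $\ell'(\ell_2-\ell_1)$. Taking expectation over $\ell'$ gives the same bound, with no need for an optimal $\sigma$.

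The only delicate point is justifying the representation $R(\ell)=\mathbb{E}_\infty[J(\ell,\ell')]$ with $\ell'$ independent of $\ell$. This holds because the observations are i.i.d.\ under $\mathbb{P}_\infty$ and by the stationarity from Lemma~\ref{lemma:suff_infinite}. We also need integrability, which holds since $J\ge W$ and $J$ is finite, for example by comparison with the bound from Theorem~\ref{thm:upper_bound2}. The lower bound $0\le R(\ell_2)-R(\ell_1)$ follows from Lemma~\ref{lemma:mono_convex}. It also follows from the same representation, since $J(\ell,\ell')$ is nondecreasing in $\ell$ because the stopping branch is.

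As a consequence, $(1+\ell_{t-1})\ell+\lambda-R(\ell)$ has slope at least $\ell_{t-1}\ge0$. This is strictly positive whenever $\ell_{t-1}>0$, so \eqref{eq:threshold_equation_comp} has at most one crossing. This is the single-threshold reduction used afterwards.
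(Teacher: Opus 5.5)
Your second, Bellman-form argument is exactly the paper's proof: the continuation branch $-\lambda+R(\ell')$ is common to both terms, the elementary bound $0\le\max\{a,c\}-\max\{b,c\}\le a-b$ for $a\ge b$ gives $0\le J(\ell_2,\ell')-J(\ell_1,\ell')\le \ell'(\ell_2-\ell_1)$, and $\mathbb{E}_\infty[\ell']=1$ finishes it; your coupling version is a valid equivalent that even gives the sharper factor $\mathbb{E}_\infty[\ell_{t+1}\mathbf 1\{\sigma=t+1\}]\le 1$. One loose remark is your integrability justification: Theorem~\ref{thm:upper_bound2} bounds $\mathcal{L}_{\rm P}$ by the ratio and gives no upper bound on $J$, so finiteness of $R$ needs its own argument (e.g.\ a Wald-type bound on $\mathbb{E}_\infty[W_\tau]$), although the paper's proof likewise simply takes $R$ to be finite.
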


\begin{proof}
See Appendix~\ref{proof:lemma:lipschitz_R}.
\end{proof}

\begin{corollary}[\color{black}{Unique state-dependent threshold for $\xi=2$}]
\label{cor:unique_threshold}
Assume $\xi=2$. For any fixed $\ell_{t-1}>0$, the function
\begin{align}
g_{\ell_{t-1}}(\ell)
\dff
(1+\ell_{t-1})\ell+\lambda-R(\ell)
\end{align}
is strictly increasing in $\ell$, and therefore
\eqref{eq:threshold_equation_comp} admits at most one solution.
When a solution exists, denote it by $\Gamma(\ell_{t-1})$.
Then the {\color{black}average-optimal} stopping rule~\eqref{eq:tau_opt_xi2_comp} reduces
to the single-threshold form
\begin{align}\label{eq:single_threshold_rule}
\tau_\lambda^\star
=
\inf\{
t\ge1:
\ell_t\ge\Gamma(\ell_{t-1})
\}\ .
\end{align}
\end{corollary}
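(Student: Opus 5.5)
The argument uses only Lemma~\ref{lemma:lipschitz_R} and the form of the stopping rule in~\eqref{eq:tau_opt_xi2_comp}. Fix $\ell_{t-1}>0$ and take $\ell_2>\ell_1\geq0$. We write
\begin{align}
g_{\ell_{t-1}}(\ell_2)-g_{\ell_{t-1}}(\ell_1)
=(1+\ell_{t-1})(\ell_2-\ell_1)-\big(R(\ell_2)-R(\ell_1)\big)\ .
\end{align}
By~\eqref{eq:lipschitz_R}, $R(\ell_2)-R(\ell_1)\leq \ell_2-\ell_1$. Hence the difference is at least $\ell_{t-1}(\ell_2-\ell_1)>0$, and $g_{\ell_{t-1}}$ is strictly increasing on $[0,\infty)$. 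A strictly increasing function has at most one zero, and the zeros of $g_{\ell_{t-1}}$ are exactly the solutions of~\eqref{eq:threshold_equation_comp}. This gives uniqueness of $\Gamma(\ell_{t-1})$ whenever a solution exists.

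For the reduction of the stopping rule, the stopping condition in~\eqref{eq:tau_opt_xi2_comp} is $(1+\ell_{t-1})\ell_t\geq-\lambda+R(\ell_t)$, which is equivalent to $g_{\ell_{t-1}}(\ell_t)\geq0$. Since $g_{\ell_{t-1}}$ is strictly increasing, the set $\{\ell\geq0: g_{\ell_{t-1}}(\ell)\geq0\}$ is an upper interval. When a root $\Gamma(\ell_{t-1})$ exists, this set is exactly $[\Gamma(\ell_{t-1}),\infty)$, and the event $\{g_{\ell_{t-1}}(\ell_t)\geq0\}$ coincides with $\{\ell_t\geq\Gamma(\ell_{t-1})\}$. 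This yields~\eqref{eq:single_threshold_rule}.

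The main delicate point is the step from strict monotonicity to an exact zero of $g_{\ell_{t-1}}$, which requires continuity. Continuity follows because $R$ is $1$-Lipschitz by Lemma~\ref{lemma:lipschitz_R}, so $g_{\ell_{t-1}}$ is continuous. An upper interval of a continuous strictly increasing function is closed at its left endpoint, and that endpoint is the root. If no root exists, then $g_{\ell_{t-1}}$ has constant sign on $[0,\infty)$. The stopping set is then either all of $[0,\infty)$ or empty, which corresponds to taking $\Gamma(\ell_{t-1})=0$ or $\Gamma(\ell_{t-1})=+\infty$.

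The case $\ell_{t-1}=0$, where strictness may fail, is excluded by hypothesis. In that case only monotonicity survives, and the stopping set is still an upper interval.
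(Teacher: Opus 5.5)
Your proof is correct and matches the argument the paper intends (it states the corollary without a separate proof): the $1$-Lipschitz bound of Lemma~\ref{lemma:lipschitz_R} gives $g_{\ell_{t-1}}(\ell_2)-g_{\ell_{t-1}}(\ell_1)\geq\ell_{t-1}(\ell_2-\ell_1)>0$, and the stopping condition in~\eqref{eq:tau_opt_xi2_comp} is exactly $g_{\ell_{t-1}}(\ell_t)\geq0$. Your continuity and no-root discussion, which the paper leaves implicit, can be tightened slightly: for $\ell_{t-1}>0$ the same bound gives $g_{\ell_{t-1}}(\ell)\geq g_{\ell_{t-1}}(0)+\ell_{t-1}\ell\to\infty$, so the empty-stopping-set case never occurs, and a root exists if and only if $g_{\ell_{t-1}}(0)\leq0$.
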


\paragraph{Computational implication.}
Corollary~\ref{cor:unique_threshold} turns the online decision into a
single comparison in~\eqref{eq:single_threshold_rule}.
Moreover, since $g_{\ell_{t-1}}(\cdot)$ is increasing,
$\Gamma(\ell_{t-1})$ can be computed robustly by bisection.
\textcolor{black}{Once the function $\Gamma(\cdot)$ has been computed
offline, the real-time decision for $\xi=2$ requires only the current
and previous likelihood ratios and a one-dimensional boundary
evaluation.}


\paragraph{Offline computation of $R$.}
The continuation value $R(\cdot)$ can be computed offline via value
iteration or, equivalently, by truncating to a large finite horizon
and iterating backward, using the stationary Bellman equation
specialized to $\xi=2$:
\begin{align}\label{eq:bellman_xi2_comp}
R(\ell)
=
\mathbb{E}_\infty
\left[
\max\{
L(1+\ell),
-\lambda+R(L)
\}
\right]\ ,
\end{align}
where $L\overset{d}{=}\ell_1$ under
$\mathbb{P}_\infty$.
A practical implementation discretizes $\ell$ on a grid
$\{\ell^{(m)}\}_{m=1}^M$ and approximates the expectation in
\eqref{eq:bellman_xi2_comp} via numerical quadrature, or Monte Carlo
when needed. Each iteration then costs $O(M)$ evaluations of the
right-hand side, multiplied by the number of quadrature nodes, and
convergence is typically achieved in a moderate number of iterations.
For general $\xi>2$, the sufficient statistic is
\begin{align}
\ell_{\xi-1}^{\,t}
=
(\ell_{t-\xi+2},\ldots,\ell_t)
\in[0,\infty)^{\xi-1}
\end{align}
(cf. Section~\ref{sec:Xi}), and the same approach applies on an
$(\xi-1)$-dimensional grid. In this case, the cost grows as
$O(M^{\xi-1})$.
In this regime, Lemma~\ref{lemma:mono_convex} is especially useful
because it implies that, for each fixed history
$\ell_{t-\xi+1}^{\,t-1}$, the stopping region is described by a
low-complexity boundary in the scalar variable $\ell_t$.
Thus, \textcolor{black}{for online implementation,} it suffices to
store the boundary rather than the full value function on the entire
grid\textcolor{black}{, although computing that boundary offline
still requires approximating the multidimensional continuation
value}.
\textcolor{black}{Accordingly, the principal scalability limitation
for large $\xi$ is the $(\xi-1)$-dimensional offline state space, not
the update of the truncated Shiryaev--Roberts statistic itself.}

\paragraph{Calibrating $\lambda$ to meet
$\mathbb{E}_\infty[\tau]=\varepsilon$.}
{\color{black}The multiplier $\lambda$ is chosen so that the resulting average-optimal rule}
satisfies the ARL constraint with equality.
For $\xi=2$ under the threshold representation
\eqref{eq:single_threshold_rule}, the ARL can be computed without
Monte Carlo as the solution to a fixed-point equation.
Let $m(x)$ denote the expected remaining time to stop given
$\ell_{t-1}=x$ under $\mathbb{P}_\infty$. Then
\begin{align}
\label{eq:arl_fixed_point}
m(x)= 1+ \mathbb{E}_\infty
\big[ m(L)\ , \mathbf{1}\{L<\Gamma(x)\}
\big]\ ,
\qquad
L\overset{d}{=}\ell_1\ ,
\end{align}
and
\begin{align}
\mathbb{E}_\infty[
\textcolor{black}{\tau_\lambda^\star}
]
=
m(0)\ ,
\end{align}
where we recall that $\ell_0=0$.
Equation~\eqref{eq:arl_fixed_point} can be solved numerically on the
same grid used for $R$, yielding a fast and stable calibration of
$\lambda$, for example through bisection in $\lambda$, since a larger
$\lambda$
\textcolor{black}{assigns greater cost to continued sampling in the
auxiliary optimal-stopping problem and typically decreases}
the ARL.

{\color{black}
\section{Minimax Guarantees and Constant-Boundary Asymptotics}
\label{sec:constant_tsr_asymptotics}

Sections~\ref{sec:Xi2}--\ref{sec:Xi1} established exact finite-sample optimality under the survival-weighted average criterion. We now return to the Pollak- and Lorden-type worst-case criteria in~\eqref{eq:p1}--\eqref{eq:p2} and for $\xi\geq 2$, we identify a joint growing ARL and $\xi$ regime in which a simpler constant-boundary TSR procedure is asymptotically minimax optimal. Intuitively, an ARL requirement of order $\varepsilon$ imposes a logarithmic evidence scale of order $\log\varepsilon$, while a window of length $\xi$ provides approximately $\xi {\sf D}_{\rm KL}(F_1\Vert F_0)$ units of post-change log-likelihood information. When the latter dominates, the constant-boundary rule succeeds uniformly with probability approaching one and attains the largest possible minimax score. To formalize these, we start with specifying a constant-boundary TSR test.

A related distinction between a structurally complex optimal rule and a simpler asymptotically optimal threshold rule appears in quickest detection of a Markov process across a sensor array~\cite{raghavan2010quickest}.

\paragraph{Constant-boundary TSR procedure.}
For a boundary $A>0$ and an admissible-window length $\xi\geq1$, define
\begin{align}
\tau^{\rm c}_{A,\xi}
&\triangleq
\inf\left\{
 t\geq1:
 W_t^{(\xi)}\geq A
\right\}\; ,
\label{eq:constant_tsr_rule}
\end{align}
with the convention $\inf\varnothing=\infty$. The rule uses the same finite-memory TSR statistic as the average-optimal procedure, but replaces its state-dependent continuation boundary by the scalar $A$.

\begin{lemma}[ARL guarantee for a constant boundary]
\label{lemma:constant_tsr_arl}
For every $\xi\geq1$ and $A>0$,
\begin{align}
\mathbb E_\infty
\left[
\tau^{\rm c}_{A,\xi}
\right]
&\geq
A\; .
\label{eq:constant_tsr_arl}
\end{align}
\end{lemma}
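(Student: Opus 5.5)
We will prove this with the standard Shiryaev--Roberts martingale argument, adapted to the truncated statistic. Under $\mathbb P_\infty$ the likelihood ratios $\ell_t$ are i.i.d. with $\mathbb E_\infty[\ell_t]=1$; this uses $F_1\ll F_0$, since $\int f_1\,d\mu=1$ on the support of $f_0$. The key step is to compare $W_t^{(\xi)}$ with the untruncated Shiryaev--Roberts statistic
\begin{align}
S_t\triangleq\sum_{j=1}^{t}\prod_{k=j}^{t}\ell_k,\qquad S_0=0,
\end{align}
which satisfies the recursion $S_t=\ell_t(1+S_{t-1})$. Because every summand is nonnegative and, under the convention $\ell_t=0$ for $t\le0$, the truncated sum keeps only the last $\min(\xi,t)$ terms of $S_t$, we have $W_t^{(\xi)}\le S_t$ pathwise. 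Hence $\{W_t^{(\xi)}\ge A\}\subseteq\{S_t\ge A\}$, and so $\tau^{\rm c}_{A,\xi}\ge\tau_{\rm SR}(A)\triangleq\inf\{t\ge1:S_t\ge A\}$ almost surely.

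It therefore suffices to show $\mathbb E_\infty[\tau_{\rm SR}(A)]\ge A$. If $\mathbb E_\infty[\tau_{\rm SR}(A)]=\infty$, there is nothing to prove, so assume it is finite. By independence, $\mathbb E_\infty[S_t\mid\F_{t-1}]=1+S_{t-1}$, so $M_t\triangleq S_t-t$ is a $\mathbb P_\infty$-martingale with $M_0=0$. Apply optional stopping to $\tau_{\rm SR}(A)\wedge n$ to get $\mathbb E_\infty[S_{\tau\wedge n}]=\mathbb E_\infty[\tau\wedge n]$. Then let $n\to\infty$: monotone convergence handles the right-hand side, and Fatou's lemma, using $S\ge0$, handles the left-hand side. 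This yields
\begin{align}
\mathbb E_\infty[S_{\tau_{\rm SR}(A)}]\le\mathbb E_\infty[\tau_{\rm SR}(A)]<\infty .
\end{align}
Finiteness of the expectation also gives $\tau_{\rm SR}(A)<\infty$ almost surely. On that event $S_{\tau_{\rm SR}(A)}\ge A$, so $A\le\mathbb E_\infty[\tau_{\rm SR}(A)]\le\mathbb E_\infty[\tau^{\rm c}_{A,\xi}]$, which is the claim.

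As an alternative, one can avoid the comparison with $S_t$ and work with $W_t^{(\xi)}$ directly. Conditioning on $\F_{t-1}$ gives $\mathbb E_\infty[W_t^{(\xi)}\mid\F_{t-1}]=1+W_{t-1}^{(\xi-1)}\le 1+W_{t-1}^{(\xi)}$, so $W_t^{(\xi)}-t$ is a nonnegative-start supermartingale. However, a supermartingale inequality runs the wrong way for the lower bound we need, which is why the comparison with the exact martingale $S_t-t$ is the cleaner route.

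The only delicate point is the limiting step in the optional stopping argument. Fatou's lemma supplies exactly the inequality direction required, and no uniform integrability is needed. The case of an infinite ARL, including $\tau=\infty$ with positive probability, is trivial.
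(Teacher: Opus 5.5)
Your main argument is correct and is essentially the paper's proof: both dominate $W_t^{(\xi)}$ by the untruncated Shiryaev--Roberts statistic, use the martingale $S_t-t$ with optional stopping at bounded times, and pass to the limit. The only difference is that the paper stops directly at $\tau^{\rm c}_{A,\xi}\wedge n$ and uses $S_{\tau\wedge n}\geq A\,\mathbf 1_{\{\tau\leq n\}}$ to get $\mathbb E_\infty[\tau]\geq A\,\mathbb P_\infty(\tau<\infty)$, which avoids both the intermediate time $\tau_{\rm SR}(A)$ and Fatou's lemma.

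Your closing remark is mistaken, however. Since $W_t^{(\xi)}-t$ is a supermartingale with $W_0^{(\xi)}=0$, optional stopping gives $\mathbb E_\infty[W^{(\xi)}_{\tau\wedge n}]\leq\mathbb E_\infty[\tau\wedge n]$, and this is exactly the direction needed once combined with $W^{(\xi)}_{\tau\wedge n}\geq A\,\mathbf 1_{\{\tau\leq n\}}$. So the direct route also works, and the comparison with $S_t$ is a convenience rather than a necessity.
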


\begin{proof}
See Appendix~\ref{proof:lemma:constant_tsr_arl}.
\end{proof}
Therefore, the choice $A=\varepsilon$ guarantees the ARL constraint. In the asymptotic regime considered below, Theorem~\ref{thm:constant_tsr_asymptotic_optimality} additionally establishes that the resulting stopping time is proper and has finite mean.

\paragraph{Uniform finite-window success guarantee.}
Let $\{Z_r:r\geq1\}$ be i.i.d. according to $F_1$, and define the post-change log-likelihood random walk by
\begin{align}
Y_r
&\triangleq
\log \ell(Z_r)\; ,
\qquad
S_k
\triangleq
\sum_{r=1}^{k}Y_r\; .
\label{eq:postchange_partial_llr}
\end{align}
For $A>0$ and $\xi\geq1$, define the finite-window crossing probability
\begin{align}
\bar p_\xi(A)
&\triangleq
\mathbb P_1
\left(
\max_{1\leq k\leq\xi}S_k
\geq
\log A
\right)\; .
\label{eq:finite_window_crossing_probability}
\end{align}
This is the probability that the likelihood-ratio product associated with the true onset crosses $A$ at some point in the admissible window. To aggregate this guarantee over all $s$ episodes, we note that it can be readily shown that there exists $b\in(0,1)$ such that
\begin{align}
\mathbb P_1(\ell_1\leq b)
&>
0\; .
\label{eq:survivability_condition}
\end{align}
Since $F_1\ll F_0$, condition~\eqref{eq:survivability_condition} also implies $\mathbb P_\infty(\ell_1\leq b)>0$. It ensures that every finite episode onset remains reachable with positive probability for the constant-boundary rules considered below.

\begin{lemma}[Uniform window-success lower bound]
\label{lemma:constant_tsr_detection}
Fix $s\geq1$, $\xi\geq1$, and $A>0$, and let $\tau=\tau^{\rm c}_{A,\xi}$. For every $\theta\in\Theta_s(\xi)$ and every episode onset $\gamma_i$,
\begin{align}
\mathbb E_\theta
\left[
\mathbf 1_{\{\gamma_i\leq\tau<\gamma_i+\xi\}}
\,\middle|\,
\mathcal F_{\gamma_i-1}
\right]
&\geq
\bar p_\xi(A)
\mathbf 1_{\{\tau\geq\gamma_i\}}\; .
\label{eq:conditional_window_success_bound}
\end{align}
Whenever $\mathbb P_\theta(\tau\geq\gamma_i)>0$, the corresponding episode-wise Pollak and Lorden success terms are therefore at least $\bar p_\xi(A)$. If condition~\eqref{eq:survivability_condition} holds and
\begin{align}
A
&>
\frac{b}{1-b}\; ,
\label{eq:survivability_boundary_condition}
\end{align}
then $\mathbb P_\theta(\tau\geq\gamma_i)>0$ for every $\theta\in\Theta_s(\xi)$ and every $i\in\{1,\ldots,s\}$, and
\begin{align}
\mathcal L_Q
\left(
\tau^{\rm c}_{A,\xi}
\right)
&\geq
s\,\bar p_\xi(A)\; ,
\qquad
Q\in\{{\rm L},{\rm P}\}\; .
\label{eq:aggregate_window_success_bound}
\end{align}
\end{lemma}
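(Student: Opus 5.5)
The argument has three parts: a pathwise domination of the TSR statistic by the true-onset likelihood-ratio product, a reachability argument based on~\eqref{eq:survivability_condition}, and aggregation over the $s$ episodes.

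\textbf{Step 1 (pathwise domination).} Fix $\theta\in\Theta_s(\xi)$ and an onset $\gamma_i$. For $t\in\{\gamma_i,\dots,\gamma_i+\xi-1\}$, the term with $k=t-\gamma_i+1\le\xi$ appears in $W_t^{(\xi)}$, and all terms are nonnegative. Hence
\begin{align}
W_t^{(\xi)}\geq \prod_{r=\gamma_i}^{t}\ell_r=\exp\Big(\sum_{r=\gamma_i}^{t}\log\ell_r\Big)\ .
\end{align}
On the event $\{\tau\geq\gamma_i\}\cap\{\max_{1\le k\le\xi}\sum_{r=\gamma_i}^{\gamma_i+k-1}\log\ell_r\ge\log A\}$, the rule therefore stops at some time in $[\gamma_i,\gamma_i+\xi-1]$. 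So the success indicator is bounded below by $\mathbf 1_{\{\tau\ge\gamma_i\}}\mathbf 1_{E_i}$, where $E_i$ denotes this crossing event.

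Now $\{\tau\geq\gamma_i\}\in\F_{\gamma_i-1}$, and $E_i$ depends only on $X_{\gamma_i},\dots,X_{\gamma_i+\xi-1}$. Because $\delta_i\ge\xi$, these observations are i.i.d.\ $F_1$ under $\mathbb P_\theta$ and independent of $\F_{\gamma_i-1}$. Conditioning therefore gives $\mathbb P_\theta(E_i\mid\F_{\gamma_i-1})=\bar p_\xi(A)$, which is~\eqref{eq:conditional_window_success_bound}.

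Dividing by $\mathbb P_\theta(\tau\ge\gamma_i\mid\F_{\gamma_i-1})=\mathbf 1_{\{\tau\ge\gamma_i\}}$ on the event $\{\tau\geq\gamma_i\}$ gives the Lorden-type term lower bound $\bar p_\xi(A)$. Taking expectations gives the Pollak-type term bound whenever $\mathbb P_\theta(\tau\ge\gamma_i)>0$.

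\textbf{Step 2 (reachability).} This is the main obstacle. The goal is to exhibit a positive-probability path on which $W_t^{(\xi)}<A$ for all $t<\gamma_i$. Take the event $\{\ell_t\le b\ \text{for all } t<\gamma_i\}$. Under $\mathbb P_\theta$, each observation before $\gamma_i$ is either nominal or post-change from an earlier episode. By~\eqref{eq:survivability_condition} and its consequence $\mathbb P_\infty(\ell_1\le b)>0$, each factor has positive probability, so by independence the event has positive probability (there are finitely many factors).

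On this event, every product of $k$ consecutive likelihood ratios is at most $b^k$, so
\begin{align}
W_t^{(\xi)}\le\sum_{k\ge1}b^k=\frac{b}{1-b}<A\ ,
\end{align}
using~\eqref{eq:survivability_boundary_condition}. Windows that reach back past time $0$ use the convention $\ell_t=0$, which only decreases the sum. Hence $\tau\geq\gamma_i$ on this event, and $\mathbb P_\theta(\tau\ge\gamma_i)>0$. This also shows that the ess\,inf in~\eqref{eq:Lorden_criterion} is taken over a non-null conditioning event, so no degenerate-convention issue arises.

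\textbf{Step 3 (aggregation).} Summing the episode-wise bounds over $i=1,\dots,s$ gives a lower bound of $s\,\bar p_\xi(A)$ for every $\theta\in\Theta_s(\xi)$. Since this bound is uniform in $\theta$, taking the infimum over $\Theta_s(\xi)$ yields~\eqref{eq:aggregate_window_success_bound} for both $Q={\rm L}$ and $Q={\rm P}$.

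The persistent configuration with $\delta_1=\infty$ is covered by the same argument, since only the first $\xi$ post-change samples are used.
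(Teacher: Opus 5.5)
Your proposal is correct and follows the paper's proof essentially step for step. It uses the same inclusion $\{\tau\geq\gamma_i\}\cap\{\max_{k}\prod_{r=0}^{k-1}\ell_{\gamma_i+r}\geq A\}\subseteq\{\gamma_i\leq\tau<\gamma_i+\xi\}$ from the true-onset summand of $W^{(\xi)}_t$, the same conditioning on $\mathcal{F}_{\gamma_i-1}$ via independence of the $F_1$ block, the same low-evidence history $\bigcap_{t<\gamma_i}\{\ell_t\leq b\}$ with the bound $b/(1-b)<A$ for reachability, and the same summation over episodes followed by the infimum over $\Theta_s(\xi)$. Your reading of the Lorden term as an essential infimum over the surviving histories $\{\tau\geq\gamma_i\}$ also matches the convention the paper adopts in its proof.
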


\begin{proof}
See Appendix~\ref{proof:lemma:constant_tsr_detection}.
\end{proof}
For $Q\in\{{\rm L},{\rm P}\}$, define the optimal minimax value
\begin{align}
\mathsf V_Q^\star(\varepsilon,\xi;s)
&\triangleq
\sup_{\tau\in\mathcal T:\,
\mathbb E_\infty[\tau]\geq\varepsilon}
\mathcal L_Q(\tau)\; .
\label{eq:true_minimax_value}
\end{align}
Because $\mathcal L_Q$ is the sum of $s$ episode-wise success probabilities,
\begin{align}
\mathsf V_Q^\star(\varepsilon,\xi;s)
&\leq
s\; .
\label{eq:trivial_minimax_upper_bound}
\end{align}
Consequently, whenever $\tau^{\rm c}_{A,\xi}$ is feasible and~\eqref{eq:survivability_boundary_condition} holds, Lemma~\ref{lemma:constant_tsr_detection} gives
\begin{align}
s\,\bar p_\xi(A)
&\leq
\mathcal L_Q
\left(
\tau^{\rm c}_{A,\xi}
\right)
\leq
\mathsf V_Q^\star(\varepsilon,\xi;s)
\leq
s\; ,
\qquad
Q\in\{{\rm L},{\rm P}\}\; .
\label{eq:finite_sample_tsr_sandwich}
\end{align}

\paragraph{Joint ARL--window asymptotics.}
Assume
\begin{align}
\mathbb P_1(\ell_1>0)
&=
1\; ,
\qquad
\mathbb E_1
\left[
\left|\log\ell_1\right|
\right]
<
\infty\; ,
\label{eq:asymptotic_tsr_integrability}
\end{align}
and define the post-change information number
\begin{align}
I
&\triangleq
\mathbb E_1[\log\ell_1]
=
{\sf D}_{\rm KL}(F_1\Vert F_0)
\in
(0,\infty)\; .
\label{eq:postchange_information_number}
\end{align}

\begin{theorem}[Asymptotic minimax optimality of a constant-boundary TSR test]
\label{thm:constant_tsr_asymptotic_optimality}
Let $\varepsilon\to\infty$, and let $\xi_\varepsilon\to\infty$ be an integer-valued sequence satisfying
\begin{align}
\limsup_{\varepsilon\to\infty}
\frac{\log\varepsilon}{\xi_\varepsilon}
&<
I\; .
\label{eq:constant_tsr_growth_condition}
\end{align}
Define
\begin{align}
\tau_\varepsilon^{\rm c}
&\triangleq
\tau^{\rm c}_{\varepsilon,\xi_\varepsilon}\; .
\label{eq:asymptotic_constant_tsr_rule}
\end{align}
Then, for every fixed $s\geq1$, the following statements hold.
\begin{enumerate}
\item For all sufficiently large $\varepsilon$, the procedure is proper and feasible:
\begin{align}
\varepsilon
&\leq
\mathbb E_\infty
\left[
\tau_\varepsilon^{\rm c}
\right]
<
\infty\; .
\label{eq:constant_tsr_proper_feasible}
\end{align}

\item Under both minimax criteria,
\begin{align}
\mathcal L_Q
\left(
\tau_\varepsilon^{\rm c}
\right)
&\longrightarrow
s\; ,
\qquad
Q\in\{{\rm L},{\rm P}\}\; .
\label{eq:constant_tsr_score_to_s}
\end{align}

\item The procedure is asymptotically optimal in the additive sense:
\begin{align}
\mathsf V_Q^\star
\left(
\varepsilon,\xi_\varepsilon;s
\right)
-
\mathcal L_Q
\left(
\tau_\varepsilon^{\rm c}
\right)
&\longrightarrow
0\; ,
\qquad
Q\in\{{\rm L},{\rm P}\}\; .
\label{eq:constant_tsr_additive_optimality}
\end{align}

\item The procedure is asymptotically optimal in the relative sense:
\begin{align}
\frac{
\mathcal L_Q
\left(
\tau_\varepsilon^{\rm c}
\right)
}{
\mathsf V_Q^\star
\left(
\varepsilon,\xi_\varepsilon;s
\right)
}
&\longrightarrow
1\; ,
\qquad
Q\in\{{\rm L},{\rm P}\}\; .
\label{eq:constant_tsr_relative_optimality}
\end{align}
\end{enumerate}
\end{theorem}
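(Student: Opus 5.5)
The proof combines the finite-sample sandwich \eqref{eq:finite_sample_tsr_sandwich} with a law-of-large-numbers estimate showing $\bar p_{\xi_\varepsilon}(\varepsilon)\to1$. The four statements will be proved in order; items 2--4 follow once item 1 and this crossing estimate are in hand.

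For \emph{item 1}, the lower bound $\mathbb E_\infty[\tau_\varepsilon^{\rm c}]\geq\varepsilon$ is Lemma~\ref{lemma:constant_tsr_arl} with $A=\varepsilon$. For finiteness, I would use a block argument under $\mathbb P_\infty$.
\begin{itemize}
\item Partition time into disjoint blocks of length $\xi_\varepsilon$.
\item On each block, the event that the last TSR term crosses, $\prod_{j=0}^{\xi_\varepsilon-1}\ell_{t-j}\geq\varepsilon$, forces $W_t^{(\xi_\varepsilon)}\geq\varepsilon$.
\item These block events are i.i.d. under $\mathbb P_\infty$, with a common probability $q_\varepsilon>0$.
\end{itemize}
To see $q_\varepsilon>0$, note that $I>0$ and $F_1\ll F_0$ give $\mathbb P_\infty(\ell_1>1)>0$. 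Indeed, if $\ell_1\leq1$ held $\mathbb P_\infty$-a.s., then $\mathbb E_1[\log\ell_1]\leq0$, a contradiction. So there is $c>1$ with $\mathbb P_\infty(\ell_1\geq c)>0$. The product over a block then exceeds $\varepsilon$ with probability at least $\mathbb P_\infty(\ell_1\geq c)^{\xi_\varepsilon}>0$ once $c^{\xi_\varepsilon}\geq\varepsilon$. This holds for large $\varepsilon$ by \eqref{eq:constant_tsr_growth_condition}, provided $\log c$ exceeds the limsup of $\log\varepsilon/\xi_\varepsilon$. If no such $c$ is available, I would fall back on a block of fixed multiple length $m\xi_\varepsilon$, or use $\mathbb E_\infty[\ell_1]\le1$ with $\mathbb P_\infty(\ell_1\ge c)>0$ for some $c>1$ and concatenate enough blocks. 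In either case $\tau_\varepsilon^{\rm c}$ is dominated by $\xi_\varepsilon$ times a geometric variable, which gives $\mathbb E_\infty[\tau_\varepsilon^{\rm c}]\leq \xi_\varepsilon/q_\varepsilon<\infty$. Feasibility follows.

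The crossing estimate is the core step. Fix $\eta>0$ with $\limsup\log\varepsilon/\xi_\varepsilon<I-\eta$, so that $\log\varepsilon\leq(I-\eta)\xi_\varepsilon$ for large $\varepsilon$. Then
\[
\bar p_{\xi_\varepsilon}(\varepsilon)\geq\mathbb P_1\!\left(S_{\xi_\varepsilon}\geq(I-\eta)\xi_\varepsilon\right)\longrightarrow1
\]
by the weak law of large numbers applied to $S_k/k\to I$. The law applies because of the integrability condition \eqref{eq:asymptotic_tsr_integrability}, and $\xi_\varepsilon\to\infty$. Only $\mathbb P_1$ enters here, since by Lemma~\ref{lemma:constant_tsr_detection} the bound is uniform over $\theta$ and over pre-change histories.

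For \emph{items 2--4}, the survivability condition \eqref{eq:survivability_boundary_condition} holds for large $\varepsilon$ because $A=\varepsilon\to\infty$ exceeds $b/(1-b)$. Combining this with feasibility from item 1, \eqref{eq:finite_sample_tsr_sandwich} gives
\[
s\,\bar p_{\xi_\varepsilon}(\varepsilon)\leq\mathcal L_Q(\tau_\varepsilon^{\rm c})\leq\mathsf V_Q^\star\leq s .
\]
Sending $\varepsilon\to\infty$ then yields:
\begin{itemize}
\item item 2, $\mathcal L_Q(\tau_\varepsilon^{\rm c})\to s$;
\item item 3, since $0\leq \mathsf V_Q^\star-\mathcal L_Q\leq s(1-\bar p)\to0$;
\item item 4, since the ratio lies in $[\bar p,1]$ and $\mathsf V_Q^\star\geq s\bar p>0$.
\end{itemize}

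I expect the main obstacle to be the finiteness part of item 1. Uniformly constructing a positive-probability crossing block under $\mathbb P_\infty$ requires a little care, because the growth condition is phrased in terms of $I$ under $\mathbb P_1$ and not in terms of the support of $\ell_1$ under $\mathbb P_\infty$. If needed, I would change measure on a block: $\mathbb P_\infty(\text{block crosses})=\mathbb E_1[\prod\ell^{-1};\text{cross}]>0$, because the $\mathbb P_1$-probability of crossing tends to one and the integrand is strictly positive under $\mathbb P_1$. This also handles the case where no convenient $c$ exists.
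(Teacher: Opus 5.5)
Your proposal is correct and follows the paper's proof essentially step for step. With $a$ any number strictly between $\limsup_{\varepsilon}\log\varepsilon/\xi_\varepsilon$ and $I$, the law of large numbers (weak or strong) gives $\bar p_{\xi_\varepsilon}(\varepsilon)\geq\mathbb P_1(S_{\xi_\varepsilon}\geq a\xi_\varepsilon)\to1$; properness comes from geometric domination over disjoint blocks of length $\xi_\varepsilon$, with $q_\varepsilon>0$ deduced from the $\mathbb P_1$-positivity of the block crossing and $F_1^{\otimes\xi_\varepsilon}\ll F_0^{\otimes\xi_\varepsilon}$, which is exactly your change-of-measure fallback; and items 2--4 follow from the sandwich \eqref{eq:finite_sample_tsr_sandwich}.

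Your other fallbacks (blocks of length $m\xi_\varepsilon$, concatenation) would not work, because $W_t^{(\xi_\varepsilon)}$ contains no product of more than $\xi_\varepsilon$ consecutive factors. They are also not needed: your first route already succeeds with $c=e^{a}$, since $\mathbb E_1[\log\ell_1]=I>a$ forces $\mathbb P_1(\ell_1>e^{a})>0$, hence $\mathbb P_\infty(\ell_1>e^{a})>0$ and $q_\varepsilon\geq\mathbb P_\infty(\ell_1>e^{a})^{\xi_\varepsilon}>0$ for all large $\varepsilon$.
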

\begin{proof}
See Appendix~\ref{proof:thm:constant_tsr_asymptotic_optimality}.
\end{proof}
Condition~\eqref{eq:constant_tsr_growth_condition} places the admissible window strictly above the information scale. In fact, it implies
\begin{align}
\xi_\varepsilon I
-
\log\varepsilon
&\longrightarrow
\infty\; .
\label{eq:information_scale_interpretation}
\end{align}
Thus, the typical post-change information accumulated within the admissible window eventually exceeds the logarithmic evidence required by an ARL of order $\varepsilon$ by a diverging margin.

\begin{corollary}[Window length above the information scale]
\label{cor:window_above_information_scale}
For any fixed $\delta>0$, let
\begin{align}
\xi_\varepsilon
&\triangleq
\left\lceil
\frac{(1+\delta)\log\varepsilon}{I}
\right\rceil\; .
\label{eq:explicit_information_scaling}
\end{align}
Then $\tau^{\rm c}_{\varepsilon,\xi_\varepsilon}$ satisfies all conclusions of Theorem~\ref{thm:constant_tsr_asymptotic_optimality}. The same conclusions hold, whenever such finite thresholds exist, for exactly ARL-calibrated boundaries $\widehat A_\varepsilon$ satisfying
\begin{align}
\mathbb E_\infty
\left[
\tau^{\rm c}_{\widehat A_\varepsilon,\xi_\varepsilon}
\right]
&=
\varepsilon\; .
\label{eq:exactly_calibrated_constant_boundary}
\end{align}
\end{corollary}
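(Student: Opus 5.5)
The corollary has two parts, and I would prove them in turn. The first is a direct verification of the growth condition~\eqref{eq:constant_tsr_growth_condition}. The second re-runs the proof of Theorem~\ref{thm:constant_tsr_asymptotic_optimality} with a calibrated boundary in place of $A=\varepsilon$.

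\textbf{Growth condition.} With $\xi_\varepsilon=\lceil (1+\delta)\log\varepsilon/I\rceil$ we have $\xi_\varepsilon\geq (1+\delta)\log\varepsilon/I$. Hence
\begin{align}
\frac{\log\varepsilon}{\xi_\varepsilon}\leq \frac{I}{1+\delta}<I\ ,
\end{align}
so the $\limsup$ in~\eqref{eq:constant_tsr_growth_condition} is at most $I/(1+\delta)<I$. Moreover, $\xi_\varepsilon$ is integer valued and tends to infinity because $\log\varepsilon\to\infty$ and $I<\infty$. Theorem~\ref{thm:constant_tsr_asymptotic_optimality} therefore applies verbatim to $\tau^{\rm c}_{\varepsilon,\xi_\varepsilon}$ and yields all four conclusions.

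\textbf{Calibrated boundaries: bounding $\widehat A_\varepsilon$.} Suppose $\widehat A_\varepsilon$ satisfies~\eqref{eq:exactly_calibrated_constant_boundary}. I would first show $\widehat A_\varepsilon\leq\varepsilon$. The ARL $A\mapsto\mathbb E_\infty[\tau^{\rm c}_{A,\xi}]$ is nondecreasing in $A$, since a larger boundary yields a pathwise later stopping time. Lemma~\ref{lemma:constant_tsr_arl} gives $\mathbb E_\infty[\tau^{\rm c}_{A,\xi}]\geq A$, so taking $A=\widehat A_\varepsilon$ gives $\varepsilon=\mathbb E_\infty[\tau^{\rm c}_{\widehat A_\varepsilon,\xi_\varepsilon}]\geq\widehat A_\varepsilon$. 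It follows that $\log\widehat A_\varepsilon\leq\log\varepsilon$ and that $\bar p_{\xi}(\widehat A_\varepsilon)\geq\bar p_\xi(\varepsilon)$, because $\bar p_\xi(A)$ is nonincreasing in $A$.

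\textbf{Calibrated boundaries: transferring the score.} The crossing-probability argument in the proof of Theorem~\ref{thm:constant_tsr_asymptotic_optimality} shows $\bar p_{\xi_\varepsilon}(\varepsilon)\to1$. The mechanism is that $\mathbb P_1(S_{\xi_\varepsilon}\geq\log\varepsilon)\to1$ by the weak law of large numbers, using $S_{\xi_\varepsilon}/\xi_\varepsilon\to I$ and $\log\varepsilon/\xi_\varepsilon\leq I/(1+\delta)$. Consequently $\bar p_{\xi_\varepsilon}(\widehat A_\varepsilon)\to1$ as well.

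\textbf{Calibrated boundaries: the survivability condition.} To invoke Lemma~\ref{lemma:constant_tsr_detection} we need~\eqref{eq:survivability_boundary_condition}, that is, $\widehat A_\varepsilon>b/(1-b)$. I expect this to be the main obstacle, since calibration alone does not obviously keep $\widehat A_\varepsilon$ large. I would first try monotonicity in the reverse direction, arguing by contradiction. If $\widehat A_\varepsilon$ stayed bounded along a subsequence, the ARL at $A\leq C$ would be bounded uniformly in $\xi$. This holds because $W_t^{(\xi)}\geq\ell_t$, so $\tau^{\rm c}_{A,\xi}\leq\inf\{t:\ell_t\geq A\}$, whose mean is $1/\mathbb P_\infty(\ell_1\geq A)$. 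That quantity is finite whenever $\mathbb P_\infty(\ell_1\geq A)>0$, and it is bounded over $A\leq C$ provided $\mathbb P_\infty(\ell_1\geq C)>0$. A bounded ARL would contradict $\mathbb E_\infty[\tau^{\rm c}_{\widehat A_\varepsilon,\xi_\varepsilon}]=\varepsilon\to\infty$.

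If $\mathbb P_\infty(\ell_1\geq C)=0$, I would instead bound the ARL through the products $\prod_{j<k}\ell_{t-j}$ directly. This case is the delicate point, and the phrase ``whenever such finite thresholds exist'' in the statement suggests the authors accept this level of care. Once $\widehat A_\varepsilon\to\infty$ is established, feasibility and properness hold by construction, since the ARL equals $\varepsilon<\infty$. Lemma~\ref{lemma:constant_tsr_detection} then gives $\mathcal L_Q\geq s\,\bar p_{\xi_\varepsilon}(\widehat A_\varepsilon)\to s$, and the sandwich~\eqref{eq:finite_sample_tsr_sandwich} delivers the additive and relative optimality statements exactly as in the theorem.
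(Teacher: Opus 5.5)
Your outline follows the paper's proof step for step: verify the growth condition, get $\widehat A_\varepsilon\le\varepsilon$ from Lemma~\ref{lemma:constant_tsr_arl}, show $\widehat A_\varepsilon\to\infty$ by contradiction, use monotonicity of $\bar p_\xi(\cdot)$, and finish with the sandwich. However, the step you flag as delicate is left genuinely open. Your Shewhart domination $\tau^{\rm c}_{A,\xi}\le\inf\{t:\ell_t\ge A\}$ bounds the ARL uniformly only when $\mathbb P_\infty(\ell_1\ge C)>0$. For a bounded likelihood ratio with $C$ above its essential supremum, it gives nothing, and you do not supply the replacement; the variance-contraction example with $\ell\le 5$ in Section~\ref{sec:sim} is exactly such a case. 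The clause ``whenever such finite thresholds exist'' does not excuse this. It concerns whether a finite $\widehat A_\varepsilon$ solving the calibration equation exists, not the bounded-$\ell$ case. So $\widehat A_\varepsilon\to\infty$, which you need for \eqref{eq:survivability_boundary_condition}, must still be proved there.

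The missing idea is to use a fixed block of $n$ samples rather than a single sample. Suppose $\widehat A_\varepsilon\le M$ along a subsequence, and pick $n$ with $q_M\triangleq\mathbb P_\infty(\prod_{r=1}^{n}\ell_r\ge M)>0$. The paper obtains such an $n$ from $S_n/n\to I>0$ under $\mathbb P_1$ together with $F_1\ll F_0$. A more elementary route also works: $\mathbb E_\infty[\ell_1]=1$ and $F_1\neq F_0$ give some $c>1$ with $\mathbb P_\infty(\ell_1\ge c)>0$, and then $q_M\ge\mathbb P_\infty(\ell_1\ge c)^n>0$ once $c^n\ge M$. Because $\xi_\varepsilon\to\infty$, eventually $\xi_\varepsilon\ge n$. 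The product over each disjoint $n$-block is then a summand of $W^{(\xi_\varepsilon)}_{mn}$, so $\tau^{\rm c}_{\widehat A_\varepsilon,\xi_\varepsilon}$ is dominated by $n$ times a geometric variable with parameter $q_M$. This gives $\mathbb E_\infty[\tau^{\rm c}_{\widehat A_\varepsilon,\xi_\varepsilon}]\le n/q_M$ uniformly, contradicting the calibration value $\varepsilon\to\infty$. This single argument covers both of your cases, since $n=1$ recovers your Shewhart bound. With it in place, the rest of your proposal goes through as written.
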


\begin{proof}
See Appendix~\ref{proof:cor:window_above_information_scale}.
\end{proof}
The corollary shows that, above the scale $\log\varepsilon/{\sf D}_{\rm KL}(F_1\Vert F_0)$, the state-dependent continuation boundary is unnecessary for first-order minimax optimality. This conclusion does not extend to a fixed short window: if $\xi$ remains fixed while $\varepsilon\to\infty$, condition~\eqref{eq:constant_tsr_growth_condition} fails and the finite-window success probability may vanish. For the criteria normalized by $s$, the limiting value in Theorem~\ref{thm:constant_tsr_asymptotic_optimality} is one.
}

\section{Numerical Evaluations}
\label{sec:sim}

{\color{black}
In this section, we numerically illustrate the finite-window behavior of the TSR procedures and compare them with classical sequential detectors. Because the criteria in~\eqref{eq:Pollak_criterion}--\eqref{eq:Lorden_criterion} sum the episode-wise success probabilities, the reported detection probabilities are shown on a per-episode scale (equivalently, the criteria are normalized by $s$ when multiple episodes are simulated). For Monte Carlo comparisons labeled ``worst-case detection probability,'' the infimum over the onset time is approximated by the minimum over the onset locations included in the simulation. Detection thresholds are calibrated under $\mathbb P_\infty$, and when a horizontal axis reports ARL we use the empirically attained value of $\mathbb E_\infty[\tau]$ so that residual calibration differences remain visible.
The exact finite-sample optimality results for $\xi\geq2$ established in Sections~\ref{sec:Xi2}--\ref{sec:Xi} concern the survival-weighted average criterion $\mathcal A_\xi$; the Pollak-type and grid-worst quantities retained below therefore provide complementary onset-robustness diagnostics rather than the criterion for which exact finite-sample optimality is claimed.

We begin with a unit-variance Gaussian mean-shift model, $F_0=\mathcal N(0,1)$ and $F_1=\mathcal N(\mu_1,1)$. Figure~\ref{fig:decay_PL} illustrates the decay of the per-episode Pollak-type success probability as the false-alarm requirement $\mathbb E_\infty[\tau]\geq\varepsilon$ becomes more stringent, for $\mu_1=1$ and $2$ and for admissible-window lengths $\xi=1,2,3$. In the weaker-signal regime ($\mu_1=1$), the success probability decreases markedly with $\varepsilon$; $\xi=1$ exhibits the steepest decline, while $\xi=3$ consistently benefits from the longer admissible window. For the stronger signal ($\mu_1=2$), the decay is substantially milder and the probabilities remain close to one over the displayed range, so the incremental benefit of enlarging $\xi$ is less pronounced.
}

\begin{figure}
    \centering
    \includegraphics[width=0.95\linewidth]{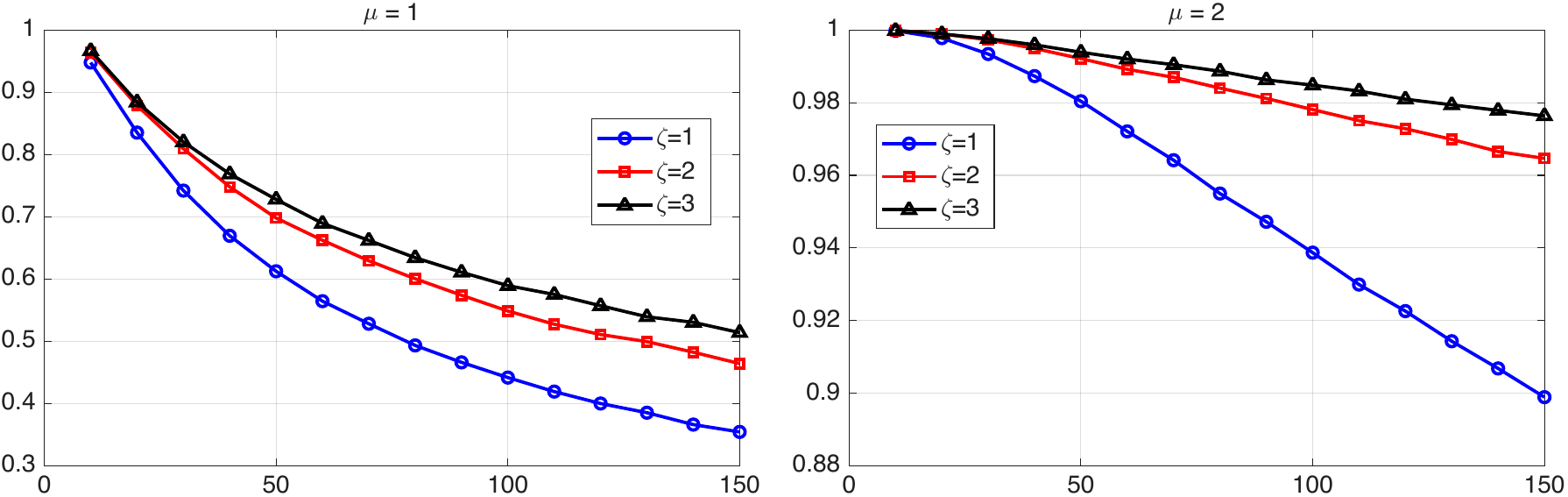}
    \caption{{\color{black}Per-episode Pollak-type success probability versus the ARL false-alarm constraint $\varepsilon$ for two Gaussian mean shifts and admissible-window lengths $\xi=1,2,3$.}}
    \label{fig:decay_PL}
\end{figure}

Figure~\ref{fig:convergence} shows the convergence behavior of the numerical procedure used to compute $R(\ell)$, plotting the error versus the iteration index $n$ on a logarithmic scale. The vertical axis represents the error in computing $R(\ell)$ (log scale), while the horizontal axis shows the iteration number up to $n=500$. The curve exhibits a sharp initial decrease in error during the first few iterations, followed by a steady and nearly linear decay on the logarithmic scale, indicating approximately exponential convergence. After roughly $400$ iterations, the error is reduced by several orders of magnitude (from about $10^2$ down to below $10^{-2}$), demonstrating both numerical stability and strong contraction properties of the underlying recursion. The smooth monotonic decline without visible oscillations further suggests that the update rule for $R(\ell)$ is well-conditioned and does not suffer from instability or divergence, thereby confirming the effectiveness of the computational scheme.

\begin{figure}[t]
    \centering
    \begin{minipage}{0.48\linewidth}
        \centering
        \includegraphics[width=\linewidth]{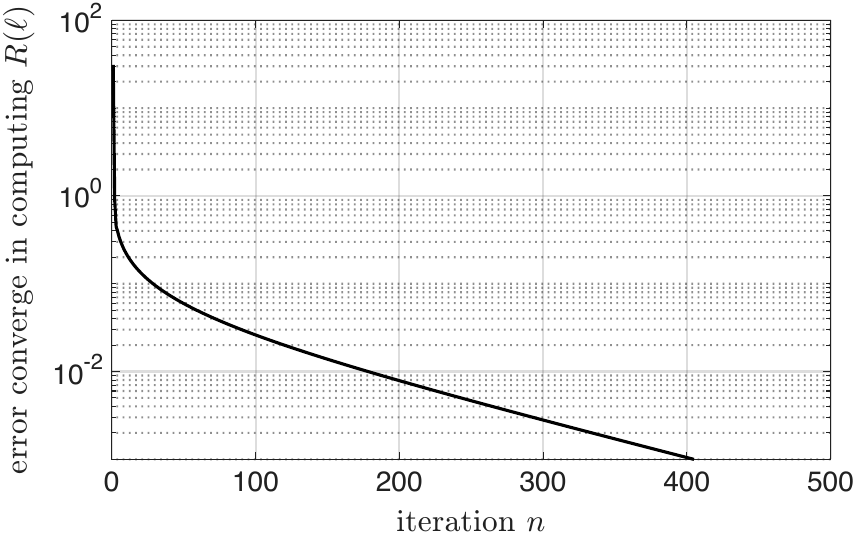}
        \caption{Convergence of iteratively  computing the state-dependent  $R(\ell)$.}
        \label{fig:convergence}
    \end{minipage}
    \hfill
    \begin{minipage}{0.48\linewidth}
        \centering
        \includegraphics[width=\linewidth]{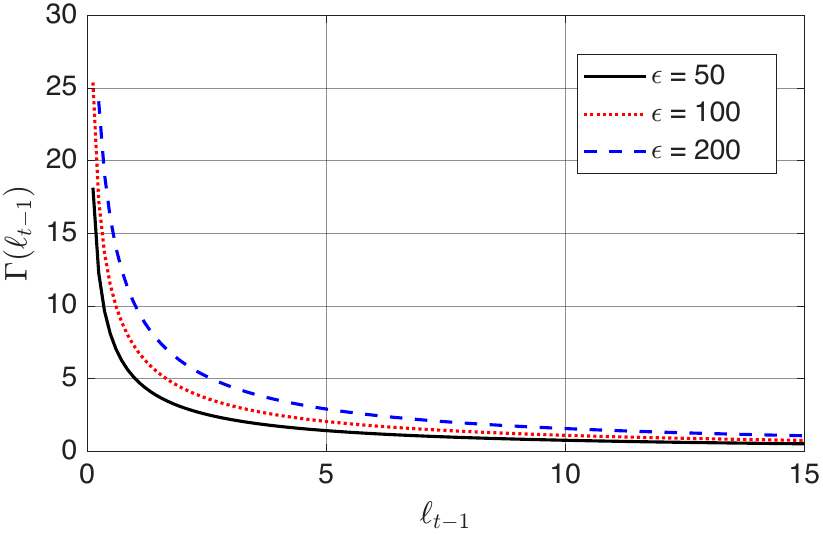}
        \caption{{\color{black}State-dependent current-likelihood-ratio threshold $\Gamma(\ell_{t-1})$ for $\xi=2$ under different ARL constraints.}}
        \label{fig:threshold_gamma}
    \end{minipage}
\end{figure}

{\color{black}
Figure~\ref{fig:threshold_gamma} illustrates the single-threshold representation of the $\xi=2$ rule by plotting $\Gamma(\ell_{t-1})$ against the previous likelihood ratio for $\varepsilon=50,100,$ and $200$. Recall that
\begin{align}
\tau_\lambda^\star
=
\inf\left\{
t:\ell_t(1+\ell_{t-1})\geq-\lambda+R(\ell_t)
\right\},
\end{align}
and, for fixed $\ell_{t-1}=\ell$, $\Gamma(\ell)$ is the solution of $x(1+\ell)=-\lambda+R(x)$ whenever that solution exists. Thus, $\Gamma(\ell)$ is the minimum current likelihood ratio required for stopping given the retained evidence $\ell_{t-1}$. The curves decrease with $\ell_{t-1}$, illustrating that stronger recent evidence lowers the amount of new evidence required to stop. Increasing the ARL requirement shifts the threshold upward, reflecting the more conservative decision rule required for stronger false-alarm protection. Importantly, this is a state-dependent infinite-horizon boundary; the horizontal axis is $\ell_{t-1}$, not time.
}

\textcolor{black}{To assess the onset sensitivity of the average-optimal TSR rule,} we evaluate the detection probability as a function of the change-point location. Specifically, for each candidate transient onset time $\gamma$, we estimate
\begin{align}
\widehat{\pi}_\gamma
=
\mathbb{P}_\gamma\left(
\gamma\leq\tau<\gamma+\xi
\mid
\tau\geq\gamma
\right),
\end{align}
that is, the probability of detecting within the admissible window conditioned on not having stopped before the change. \textcolor{black}{The exact average-optimality theory does not require this conditional probability to be constant across $\gamma$.} Figure~\ref{fig:equalizer_validation} shows a nearly flat profile around $0.21$--$0.23$. \textcolor{black}{Thus, in this numerical regime, optimizing the survival-weighted average does not appear to sacrifice performance at particular onset locations.} The remaining fluctuations are consistent with the finite Monte Carlo uncertainty indicated by the confidence intervals; no finite-horizon or terminal-time effect is involved, since the stopping problem itself is infinite-horizon.

\begin{figure}[t]
    \centering
    \begin{minipage}{0.48\linewidth}
        \centering
        \includegraphics[width=\linewidth]{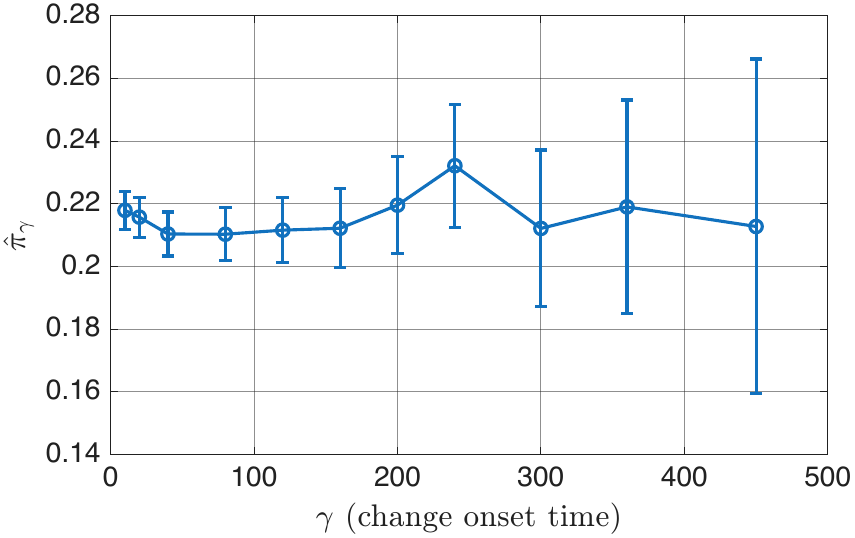}
        \captionof{figure}{Estimated conditional detection probability $\widehat{\pi}_\gamma$ as a function of the change-point location $\gamma$, \textcolor{black}{illustrating the onset sensitivity of the average-optimal TSR rule.}}
        \label{fig:equalizer_validation}
    \end{minipage}
    \hfill
    \begin{minipage}{0.48\linewidth}
        \centering
        \includegraphics[width=\linewidth]{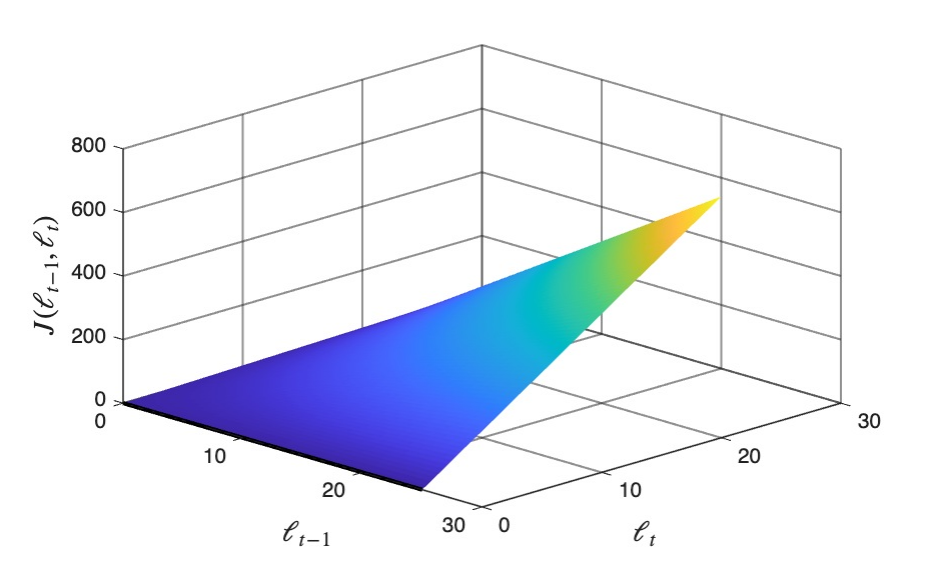}
        \captionof{figure}{{\color{black}Surface plot of the two-state value function $J(\ell_{t-1},\ell_t)$, illustrating its dependence on the retained and current likelihood ratios.}}
        \label{fig:two_state_geometry}
    \end{minipage}
\end{figure}

{\color{black}
Figure~\ref{fig:two_state_geometry} shows the value function $J(\ell_{t-1},\ell_t)$ for $\xi=2$. This is not a second detection statistic: the TSR detection statistic is $W_t^{(2)}$, whereas $J$ is the reward-to-go appearing in the optimal-stopping formulation. The surface makes explicit the two-dimensional state $(\ell_{t-1},\ell_t)$ and increases with both retained and current evidence, consistent with the monotonicity established in Section~\ref{sec:comp}. Its geometry also reflects the multiplicative term $\ell_t(1+\ell_{t-1})$: stronger retained evidence increases the value associated with a given current likelihood ratio and underlies the state dependence of the stopping decision.
}

{\color{black}
\begin{figure}[t]
    \centering
    \subfigure[Bounded-LR variance contraction.]{
        \includegraphics[width=0.48\linewidth]{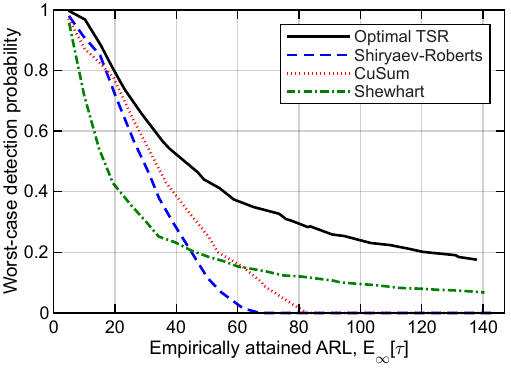}}
    \hfill
    \subfigure[Unbounded Gaussian mean shift.]{
        \includegraphics[width=0.48\linewidth]{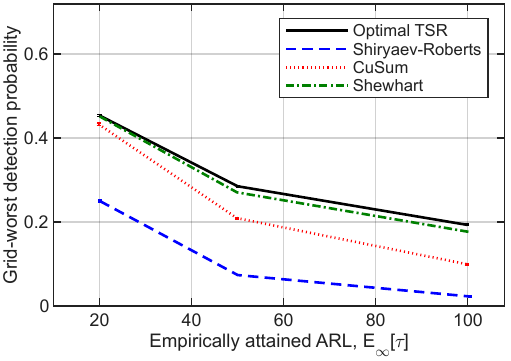}}
    \caption{Grid-worst conditional probability of detection within the admissible window versus empirically attained ARL for the TSR, Shiryaev--Roberts, CuSum, and Shewhart procedures, with $\xi=2$.}
    \label{fig:benchmark}
\end{figure}

We next assess whether tailoring the stopping rule to the finite-window probability-maximizing objective provides a material advantage over standard sequential detectors. Figure~\ref{fig:benchmark} compares the {average-optimal} TSR test with the classical Shiryaev--Roberts, CuSum, and Shewhart tests for $\xi=2$ in two Gaussian models. {Because exact finite-sample optimality is established for $\mathcal A_2$, the grid-worst quantity in Figure~\ref{fig:benchmark} is a complementary robustness comparison rather than the optimized criterion.} The left panel uses the variance-contraction model $F_0=\mathcal N(0,1)$ and $F_1=\mathcal N(0,0.2^2)$. This setting is particularly informative because the single-sample likelihood ratio is bounded, $\ell(x)=5\exp(-12x^2)\leq5$. Thus, no individual observation can provide arbitrarily large evidence for the change, making effective use of the two observations available within the admissible detection window important. The left panel shows a clear separation between the {average-optimal} TSR rule and the classical alternatives. When the ARL requirement is small, all procedures operate aggressively, and their detection probabilities are high. As the false-alarm constraint becomes more stringent, however, the TSR curve decays substantially more slowly and remains uniformly above the classical procedures over the displayed range. At moderate and large ARLs, the gain becomes pronounced: TSR retains a substantial probability of detection within the two-sample window, whereas classical Shiryaev--Roberts and CuSum eventually have essentially zero grid-worst detection probability.

The right panel provides a complementary comparison for the standard unbounded mean-shift model $F_0=\mathcal N(0,1)$ and $F_1=\mathcal N(1,1)$. At an attained ARL near $20$, CuSum and Shewhart have grid-worst success probabilities of approximately $0.432$ and $0.452$, respectively, compared with $0.455$ for TSR. Thus, the pronounced separation in the bounded-likelihood-ratio example does not persist in this aggressive operating regime. The advantage of TSR becomes clearer as the false-alarm requirement increases. Near ARL $100$, the corresponding probabilities are approximately $0.192$ for TSR, $0.176$ for Shewhart, $0.099$ for CuSum, and $0.023$ for Shiryaev--Roberts. Taken together, the two panels show that the benefit of finite-window tailoring is regime dependent: classical procedures can remain competitive when thresholds are low and individual observations can provide unbounded evidence, whereas the TSR rule retains a larger onset-robust success probability as the false-alarm constraint becomes more stringent.

For the right panel, all accumulation statistics are initialized at zero, and the grid-worst value is the minimum over onset locations $\gamma\in\{1,5,10,20,40\}$. The continuation function for TSR is computed from~\eqref{eq:bellman_xi2_comp} using $96$-point Gauss--Hermite quadrature and a value-iteration tolerance of $2\times10^{-10}$. For TSR, Shiryaev--Roberts, and CuSum, threshold calibration uses $30{,}000$ no-change sample paths. The attained ARLs are estimated from an independent set of $100{,}000$ paths, and the onset-specific success probabilities use $150{,}000$ paths per onset. The Shewhart threshold and success probability are evaluated analytically. The displayed error bars for the other procedures are pointwise $95\%$ Monte Carlo intervals. The accompanying simulation script and tabulated output record the thresholds, attained ARLs, standard errors, and fixed random seeds.

The latter behavior has a simple interpretation in this example. Since $\ell_t\leq5$, a CuSum statistic initialized at zero can accumulate at most $2\log5$ during the first two post-change observations. Likewise, a classical Shiryaev--Roberts statistic initialized at zero satisfies
\begin{align}
{\color{black}S_1^{\rm SR}}\leq 5,
\qquad
{\color{black}S_2^{\rm SR}}
=
(1+{\color{black}S_1^{\rm SR}})\ell_2
\leq 30\ .
\end{align}
Consequently, once the thresholds required to meet a stringent ARL constraint exceed these levels, the corresponding procedures cannot stop within the two-sample admissible window for an early change onset. This explains the sharp deterioration of their worst-case finite-window performance; it is a consequence of the mismatch between procedures designed to accumulate evidence over an unrestricted horizon and an objective that credits detection only within a short window. The comparison with Shewhart highlights the complementary effect. Shewhart does not suffer from accumulation of irrelevant old evidence, but it uses only the current observation and therefore cannot exploit the second sample available when $\xi=2$. TSR occupies the appropriate middle ground: its statistic retains precisely the likelihood-ratio terms associated with change onsets that can still lead to a successful decision,
\begin{align}
W_t^{(2)}
=
\ell_t(1+\ell_{t-1})\ ,
\end{align}
while its state-dependent continuation boundary accounts for the value of the retained evidence. Hence, although $W_t^{(\xi)}$ is a truncated Shiryaev--Roberts statistic, the finite-memory truncation and the associated stopping rule can lead to substantially different behavior from classical Shiryaev--Roberts and CuSum when the admissible detection window is short.
}

\FloatBarrier
\begin{figure}
    \centering
    \includegraphics[width=0.95\linewidth]{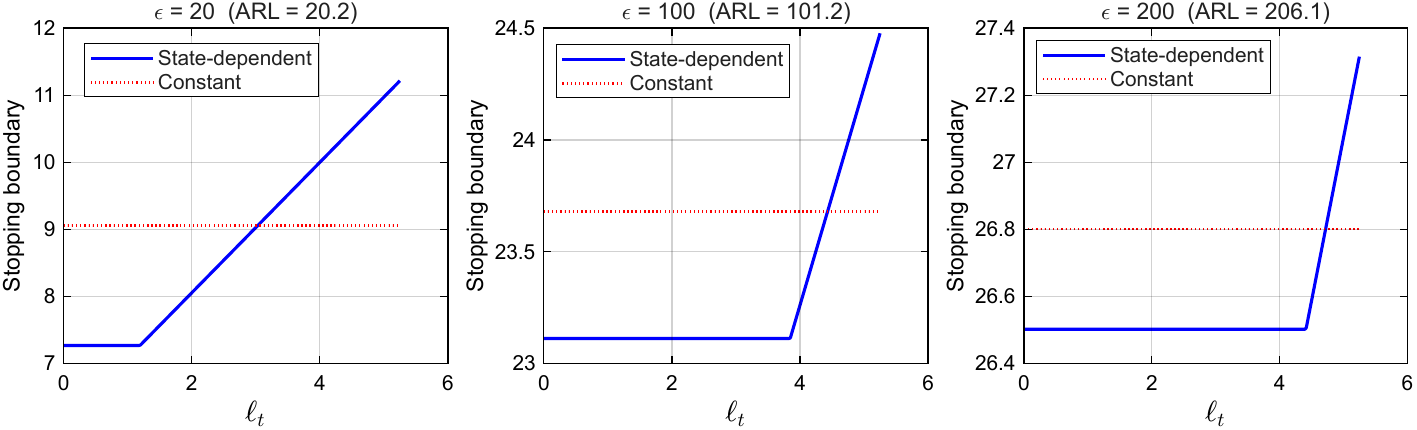}
    \caption{State-dependent continuation boundary $-\lambda+R(\ell_t)$ and a matched-ARL constant boundary.}
    \label{fig:boundary-comparison}
\end{figure}

{\color{black}
\paragraph{Structure of the state-dependent boundary.}
We next examine the role of the state-dependent continuation boundary itself for the variance-contraction setting used in the left panel of Figure~\ref{fig:benchmark}. For $\xi=2$, the state-dependent TSR rule compares the statistic
\begin{align}
W_t^{(2)}
=
\ell_t(1+\ell_{t-1})
\end{align}
with the continuation boundary
\begin{align}
h_\lambda(\ell_t)
\triangleq
-\lambda+R(\ell_t)\ ,
\end{align}
whereas the constant-boundary TSR rule replaces $h_\lambda(\ell_t)$ by a scalar threshold $A$. To isolate the effect of this state dependence, the constant threshold is calibrated to essentially the same ARL as the {average-optimal} rule. Figure~\ref{fig:boundary-comparison} shows the resulting boundaries for three false-alarm constraints. The empirically attained ARLs, $20.2$, $101.2$, and $206.1$, are close to the corresponding target values $\varepsilon=20$, $100$, and $200$. The figure shows that the {average-optimal} boundary is genuinely state dependent, but that the extent of this dependence changes with the operating regime. For $\varepsilon=20$, the variation is substantial over a broad range of $\ell_t$: the {average-optimal} boundary initially lies well below the matched constant threshold, crosses it at an intermediate likelihood-ratio value, and then rises appreciably above it. Thus, at this relatively aggressive operating point, replacing the continuation value by a single threshold removes a significant amount of state adaptation.

As the ARL constraint becomes more stringent, the state dependence becomes more localized. For $\varepsilon=100$ and $200$, the {average-optimal} boundary is nearly flat over most of the likelihood-ratio range and lies moderately below the corresponding constant threshold. It then increases sharply when $\ell_t$ approaches the upper end of its attainable range. Hence, a constant threshold can approximate the {average-optimal} boundary reasonably well over much of the state space in these regimes, but it does not reproduce the behavior of the {average-optimal} rule when the current likelihood ratio is large. In precisely those states, strong current evidence also has substantial continuation value because it becomes part of the next finite-memory TSR statistic. This observation provides a more nuanced answer to whether the state-dependent boundary is practically necessary. State dependence is not equally important at every state or every ARL: for stringent false-alarm constraints, much of the boundary is close to constant, while the principal adaptation is concentrated in high-evidence states. For less stringent constraints, the departure from a constant boundary is considerably more pronounced. Thus, the constant-boundary approximation can be useful in some operating regimes, while the exact {average-optimal} TSR construction retains a qualitatively different decision structure that becomes relevant in particular regions of the likelihood-ratio state space.

\paragraph{Finite-sample comparison with a constant boundary.}
We next isolate the practical value of state dependence by comparing the state-dependent TSR procedure with a constant-boundary TSR procedure using the same statistic and approximately matched false-alarm levels. Figure~\ref{fig:state_constant_exponential} uses an exponential model with nominal rate $1$, post-change rate $8$, and admissible-window length $\xi=3$. For each operating point, the scalar boundary of the constant-boundary TSR procedure is calibrated separately to approximately match the ARL attained by the state-dependent procedure; the horizontal coordinate reports the empirically attained ARL. {As in Figure~\ref{fig:benchmark}, the grid-worst probability reported here is a robustness diagnostic; the finite-sample exact optimality of the state-dependent rule is with respect to $\mathcal A_\xi$.}

\begin{figure}[t]
    \centering
    \includegraphics[width=0.5\linewidth]{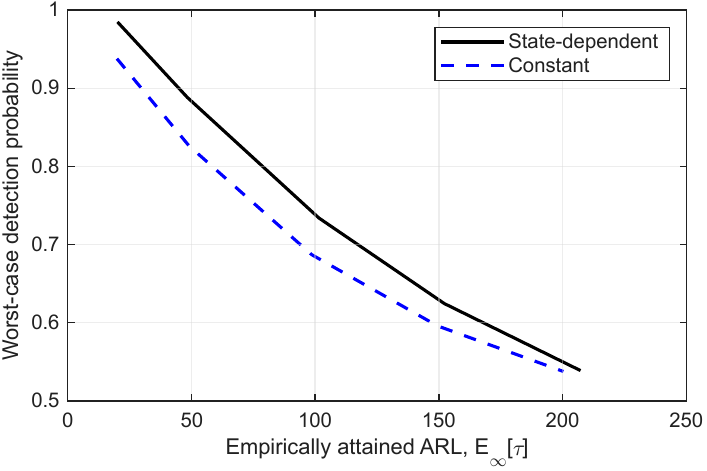}
    \caption{State-dependent and constant-boundary TSR procedures under approximately matched false-alarm levels for the exponential model with nominal rate $1$, post-change rate $8$, and admissible-window length $\xi=3$.}
    \label{fig:state_constant_exponential}
\end{figure}

Both procedures exhibit the expected decrease in finite-window detection probability as the ARL requirement becomes more stringent. The state-dependent procedure has the higher point estimate throughout the displayed range, with the clearest separation at moderate ARLs. Near ARL $50$, the worst-case detection probability increases from approximately $0.826$ for the constant-boundary rule to $0.888$ for the state-dependent rule, an absolute gain of about $0.062$ and a relative gain of approximately $7.5\%$. Near ARL $100$, the corresponding probabilities are approximately $0.687$ and $0.734$, yielding a relative improvement of about $6.8\%$. The advantage decreases as the false-alarm requirement becomes more stringent and is small at the displayed resolution near ARL $200$. Thus, the experiment shows that state dependence can provide a measurable finite-sample benefit without obtaining that gain through a higher false-alarm rate, while also illustrating that the benefit becomes small in more stringent operating regimes.

\paragraph{Information-Scale Transition.}
We next numerically illustrate the asymptotic mechanism underlying Theorem~\ref{thm:constant_tsr_asymptotic_optimality}. We consider the Gaussian mean-shift model $F_0=\mathcal N(0,1)$ and $F_1=\mathcal N(1,1)$ for which
\begin{align}
I
\triangleq
{\sf D}_{\rm KL}(F_1\Vert F_0)
=
\frac{1}{2}\ .
\end{align}
Recall the finite-window crossing probability introduced in Section~\ref{sec:constant_tsr_asymptotics},
\begin{align}
\bar p_{\xi}(A)
\triangleq
\mathbb P_1
\left(
\max_{1\leq k\leq\xi}S_k
\geq
\log A
\right).
\end{align}
For the constant-boundary TSR rule with $A=\varepsilon$, Lemma~\ref{lemma:constant_tsr_detection} shows that $\bar p_{\xi}(\varepsilon)$ provides a uniform lower bound on the per-opportunity Pollak- and Lorden-type success probabilities. Hence, the convergence
\begin{align}
\bar p_{\xi_\varepsilon}(\varepsilon)\to1
\end{align}
is precisely the mechanism used in Theorem~\ref{thm:constant_tsr_asymptotic_optimality} to establish asymptotic minimax optimality. Figure~\ref{fig:information_scale} illustrates this behavior. In the left panel, we select the admissible-window length according to
\begin{align}
\xi_\varepsilon
=
\left\lceil
c\,\frac{\log\varepsilon}{I}
\right\rceil
\end{align}
for several values of $c$. The resulting behavior changes sharply depending on whether $c$ lies below or above one. When $c<1$, the typical amount of post-change log-likelihood information available within the admissible window satisfies $\xi_\varepsilon I\approx c\log\varepsilon<\log\varepsilon$ and the crossing probability decreases as the false-alarm requirement becomes more stringent. At the critical scaling $c=1$, the available mean information and the required evidence are of the same order, and the crossing probability remains at an intermediate level over the range considered. In contrast, when $c>1$, the admissible window provides an excess amount of mean post-change information of approximately $\xi_\varepsilon I-\log\varepsilon\approx(c-1)\log\varepsilon$. Accordingly, $\bar p_{\xi_\varepsilon}(\varepsilon)$ increases with $\varepsilon$. The increase is more pronounced for larger values of $c$, since the admissible window extends farther beyond the information scale. This behavior is consistent with the sufficient condition of Theorem~\ref{thm:constant_tsr_asymptotic_optimality}: any fixed $c>1$ satisfies
\begin{align}
\limsup_{\varepsilon\to\infty}
\frac{\log\varepsilon}{\xi_\varepsilon}
<
I\ .
\end{align}
The convergence in the left panel is relatively gradual, especially when $c$ is only moderately larger than one. This is expected in the Gaussian model. Under $F_1$, the log-likelihood increments satisfy
\begin{align}
\log\frac{f_1(Z_r)}{f_0(Z_r)}
\sim
\mathcal N(I,1)\ ,
\end{align}
for the present choice of unit mean shift. With $\xi_\varepsilon\approx c\log\varepsilon/I$, the excess mean evidence above the crossing level, normalized by the standard deviation of $S_{\xi_\varepsilon}$, grows only on the order of
\begin{align}
(c-1)
\sqrt{\frac{\log\varepsilon}{2c}}\ .
\end{align}
Thus, even for large but finite $\varepsilon$, the crossing probability need not yet be close to one when $c$ is near the critical value. The right figure displays the same phenomenon using the normalized information ratio
\begin{align}
r
\triangleq
\frac{\xi I}{\log\varepsilon}.
\end{align}
The vertical dashed line at $r=1$ corresponds to the information scale
\begin{align}
\xi
=
\frac{\log\varepsilon}{I}.
\end{align}
As $\varepsilon$ increases, the curves separate increasingly on the two sides of this value. For $r<1$, the crossing probability becomes smaller with increasing $\varepsilon$, whereas for $r>1$ it becomes larger. Hence, the transition becomes progressively sharper around $r=1$, numerically illustrating the information scale identified by the asymptotic analysis. The curves with $r\leq1$ are included only as empirical contrasts, since Theorem~\ref{thm:constant_tsr_asymptotic_optimality} establishes the sufficient supercritical regime $r>1$ rather than a converse for $r\leq1$.

\begin{figure}
    \centering
    \includegraphics[width=0.95\linewidth]{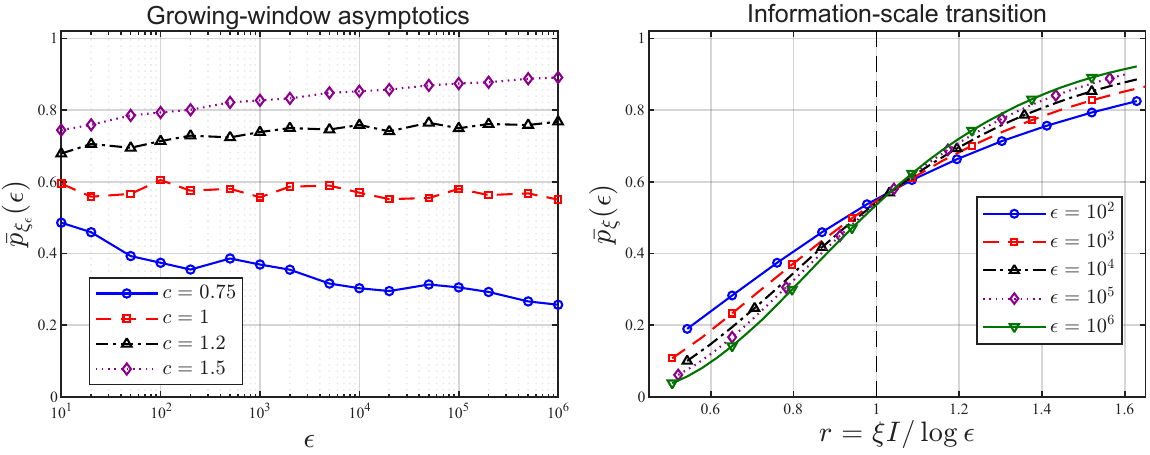}
    \caption{Information-scale transition.
    Left: finite-window crossing probability
    $\bar p_{\xi_\varepsilon}(\varepsilon)$ for
    $\xi_\varepsilon=\lceil c\log(\varepsilon)/I\rceil$.
    Curves with $c>1$ correspond to the regime covered by
    Theorem~\ref{thm:constant_tsr_asymptotic_optimality}; $c\leq1$ is included as an empirical comparison.
    Right: crossing probability versus the normalized information ratio
    $r=\xi I/\log\varepsilon$ for increasing values of $\varepsilon$.
    The dashed line marks the critical information scale $r=1$.}
    \label{fig:information_scale}
\end{figure}
}

\section{Concluding Remarks}\label{sec:conclusion}

We have studied the problem of \textcolor{black}{probability-maximizing sequential detection} in a stochastic process that may experience multiple, non-overlapping departures from a nominal distribution, each of finite duration, and followed by a return to the nominal regime. \textcolor{black}{The formulation also includes a single persistent change as a special case.} The objective has been to identify a change-point in real time, \textcolor{black}{within a prescribed admissible detection window after its occurrence}, while satisfying a false-alarm constraint. \textcolor{black}{The stopping time itself remains infinite-horizon; the detection window determines whether a stop is credited as successful for a given change onset.} To address this problem, we have adopted a probability-maximizing \textcolor{black}{formulation, considering both a survival-weighted average success criterion and Pollak- and Lorden-type minimax criteria}, under which performance is measured by the probability of stopping within the allowable detection window following a transient onset. This formulation differs fundamentally from classical quickest-change detection criteria based on expected delay and is particularly well suited to transient phenomena that may disappear before a detector has time to react. Our analysis has led to an exact characterization of the optimal stopping rule \textcolor{black}{under the survival-weighted average success criterion}. In particular, we have shown that the optimal decision rule is governed by a finite-memory statistic that aggregates likelihood-ratio evidence over the most recent observations within the \textcolor{black}{admissible detection window}. This yields a state-dependent optimal stopping structure and establishes the proposed \textcolor{black}{truncated Shiryaev--Roberts (TSR) test as the exactly optimal rule for the survival-weighted average problem}. The results have further revealed a natural and interpretable connection to classical sequential detection procedures: when the \textcolor{black}{admissible window} is $\xi=1$, the optimal rule reduces to the classical Shewhart test, corresponding to the requirement of detecting the change immediately upon its onset. \textcolor{black}{In this case, the Shewhart test is also exactly optimal under the Pollak- and Lorden-type minimax criteria.} The case $\xi=2$ provides the first nontrivial extension and illustrates explicitly how the optimal rule departs from a purely myopic likelihood-ratio threshold. More generally, the analysis has shown that enlarging the \textcolor{black}{admissible detection window} transforms the decision rule from an instantaneous test into a structured, \textcolor{black}{finite-memory, infinite-horizon stopping rule} that combines evidence from several recent samples. \textcolor{black}{For $\xi\geq 2$, we do not claim finite-sample exact minimax optimality of the state-dependent TSR rule; instead, the Pollak- and Lorden-type criteria provide complementary worst-case guarantees. We further showed that a simpler constant-boundary TSR procedure is asymptotically minimax optimal under both criteria when the admissible window grows beyond the information-accumulation scale $\log\epsilon/{\sf D}_{\rm KL}(F_1\Vert F_0)$.}

These findings suggest several directions for future work, including extensions to settings with unknown pre- and post-change distributions, overlapping or dependent transient periods, composite and nonparametric models, and decentralized or high-dimensional observation structures\textcolor{black}{, as well as scalable approximations for computing the state-dependent boundary for large admissible windows}.

\appendix

\section{\textcolor{black}{Proof of Lemma~\ref{lemma:average_identity}}}
\label{app:average_identity}
{\color{black}
For each $\gamma\geq 1$, the event $\{\tau\geq\gamma\}$ is
$\mathcal F_{\gamma-1}$-measurable, and
$\mathbb P^\xi_\gamma$ and $\mathbb P_\infty$ coincide on
$\mathcal F_{\gamma-1}$. Hence,
\begin{equation}
\mathbb P_\infty(\tau\geq\gamma)p^\xi_\gamma(\tau)
=
\mathbb P^\xi_\gamma(\gamma\leq\tau<\gamma+\xi)\ .
\end{equation}
Decomposing according to the stopping time gives
\begin{align}
\mathbb P^\xi_\gamma(\gamma\leq\tau<\gamma+\xi)
&=
\sum_{k=1}^{\xi}
\mathbb P^\xi_\gamma(\tau=\gamma+k-1)
\\
&=
\sum_{k=1}^{\xi}
\mathbb E_\infty\!\left[
\left(\prod_{j=0}^{k-1}\ell_{\gamma+j}\right)
\mathbf 1_{\{\tau=\gamma+k-1\}}
\right].
\end{align}
Summing over $\gamma\geq1$ and applying Tonelli's theorem,
\begin{align}
&\sum_{\gamma=1}^{\infty}
\mathbb P_\infty(\tau\geq\gamma)p^\xi_\gamma(\tau)
=
\mathbb E_\infty\!\left[
\sum_{k=1}^{\xi}
\sum_{\gamma=1}^{\infty}
\left(\prod_{j=0}^{k-1}\ell_{\gamma+j}\right)
\mathbf 1_{\{\tau=\gamma+k-1\}}
\right].
\end{align}
For each fixed $k$, setting $t=\gamma+k-1$ and noting that
$\gamma\geq1$ is equivalent to $t\geq k$, we obtain
\begin{align}
\sum_{\gamma=1}^{\infty}
\left(\prod_{j=0}^{k-1}\ell_{\gamma+j}\right)
\mathbf 1_{\{\tau=\gamma+k-1\}}
&=
\mathbf 1_{\{\tau\geq k\}}
\prod_{j=0}^{k-1}\ell_{\tau-j}\ .
\end{align}
Therefore,
\begin{align}
\sum_{\gamma=1}^{\infty}
\mathbb P_\infty(\tau\geq\gamma)p^\xi_\gamma(\tau)
&=
\mathbb E_\infty\!\left[
\sum_{k=1}^{\xi}
\mathbf 1_{\{\tau\geq k\}}
\prod_{j=0}^{k-1}\ell_{\tau-j}
\right]
\\
&=
\mathbb E_\infty\!\left[W^{(\xi)}_\tau\right]\ .
\end{align}
Finally, the tail-sum identity gives
\begin{align}
\sum_{\gamma=1}^{\infty}\mathbb P_\infty(\tau\geq\gamma)
=
\mathbb E_\infty[\tau]\ .
\end{align}
Dividing the preceding identity by $\mathbb E_\infty[\tau]$
and using the definitions of $w_\gamma(\tau)$ and
$\mathcal A_\xi(\tau)$ proves the result.
}

\section{Proof of Theorem~\ref{thm:upper_bound2}}
\label{app:thm:upper_bound2}

For each $\gamma\in\mathbb N$, let $\P\gamma$ denote the \emph{local} (window-$2$) post-change law under which $X_\gamma$ and $X_{\gamma+1}$ have density $f_1$ and all other $X_t$ have density $f_0$, and all $X_t$ are independent. Then for any $\gamma\in\mathbb N$ the measures $\P\gamma$ and $\P_\infty$ coincide on $\mathcal F_{\gamma-1}$. Furthermore, due to independence of samples over time, for any $\mathcal F_\gamma-$ and $\mathcal F_{\gamma+1}-$measurable events $A$ and $B$ we have
\begin{align}
\label{eq:A:rhs}
{\mathbb{P}_\gamma(A)=\mathbb{E}_\infty \big[\ell_\gamma\,\mathbf{1}_A\big]\ , \qquad 
\mathbb{P}_\gamma(B)=\mathbb{E}_\infty \big[\ell_\gamma\ell_{\gamma+1}\,\mathbf{1}_B\big]\ .}
\end{align}

\paragraph{Step 1: Bound the Pollak-like criterion for $s=1$.}
We first treat the setting {$s=1$}. {For $\xi=2$, the Pollak-like criterion \eqref{eq:Pollak_criterion} reduces to}
\begin{align}
\label{eq:bound1:d}
{\mathcal{L}_{\rm P}(\tau)=\inf_{\gamma\ge1} \mathbb{P}_\gamma(\tau\in\{\gamma,\gamma+1\}\mid \tau\ge\gamma)\ .}
\end{align}
{Fix $\gamma\ge 1$. Dropping the infimum in \eqref{eq:bound1:d} yields}
\begin{align}
{\mathcal{L}_{\rm P}(\tau)\leq \mathbb{P}_\gamma(\tau\in\{\gamma,\gamma+1\}\mid \tau\ge\gamma)\ .}
\end{align}
{Multiplying both sides by $\mathbb{P}_\gamma(\tau\ge\gamma)$ and using the definition of conditional probability, we obtain}
\begin{align}
\label{eq:pollak_step1}
{\mathbb{P}_\gamma(\tau\ge\gamma)\,\mathcal{L}_{\rm P}(\tau)\le \mathbb{P}_\gamma(\tau=\gamma)+\mathbb{P}_\gamma(\tau=\gamma+1)\ .}
\end{align}
{Since $\{\tau\ge\gamma\}\in\F_{\gamma-1}$ (because $\tau$ is a stopping time) and $\mathbb{P}_\gamma$ and $\mathbb{P}_\infty$ coincide on $\F_{\gamma-1}$,}
\begin{align}
{\mathbb{P}_\gamma(\tau\ge\gamma)=\mathbb{P}_\infty(\tau\ge\gamma)\ .}
\end{align}
{Furthermore, $\{\tau=\gamma\}\in\F_\gamma$ and $\{\tau=\gamma+1\}\in\F_{\gamma+1}$, so \eqref{eq:A:rhs} gives}
\begin{align}
{\mathbb{P}_\gamma(\tau=\gamma)=\mathbb{E}_\infty \big[\ell_\gamma\,\mathbf{1}\{\tau=\gamma\}\big],\qquad
\mathbb{P}_\gamma(\tau=\gamma+1)=\mathbb{E}_\infty \big[\ell_\gamma\ell_{\gamma+1}\,\mathbf{1}\{\tau=\gamma+1\}\big]\ .}
\end{align}
{Substituting these identities into \eqref{eq:pollak_step1} yields, for every $\gamma\ge 1$,}
\begin{align}\label{eq:pollak_pointwise_bound}
{\mathbb{P}_\infty(\tau\ge\gamma)\,\mathcal{L}_{\rm P}(\tau)\le 
\mathbb{E}_\infty \Big[\ell_\gamma\,\mathbf{1}\{\tau=\gamma\}+\ell_\gamma\ell_{\gamma+1}\,\mathbf{1}\{\tau=\gamma+1\}\Big]\ .}
\end{align}
{Summing \eqref{eq:pollak_pointwise_bound} over $\gamma\ge 1$ and using the identity $\tau=\sum_{\gamma\ge 1}\mathbf{1}\{\tau\ge\gamma\}$ (hence $\mathbb{E}_\infty[\tau]=\sum_{\gamma\ge 1}\mathbb{P}_\infty(\tau\ge\gamma)$), we obtain}
\begin{align}
{\mathbb{E}_\infty[\tau]\ \mathcal{L}_{\rm P}(\tau)
\le \mathbb{E}_\infty \big[\ell_\tau+\ell_{\tau-1}\ell_\tau\big]\ .}
\end{align}
{Therefore, for $s=1$,}
\begin{align}
{\mathcal{L}_{\rm P}(\tau)\le \frac{\mathbb{E}_\infty \big[\ell_\tau+\ell_{\tau-1}\ell_\tau\big]}{\mathbb{E}_\infty[\tau]}\ ,}
\end{align}
{where we use the convention $\ell_0=0$ so that $\ell_{\tau-1}\ell_\tau=0$ when $\tau=1$.}

\paragraph{Step 2: Bound the Lorden-like criterion for $s=1$.}
{For $\xi=2$ and $s=1$, the Lorden-like criterion \eqref{eq:Lorden_criterion} reduces to}
\begin{align}
{\mathcal{L}_{\rm L}(\tau)=\inf_{\gamma\ge1}\operatorname*{ess\,inf}_{\F_{\gamma-1}}
\mathbb{P}_\gamma(\tau\in\{\gamma,\gamma+1\}\mid \F_{\gamma-1},\tau\ge\gamma)\ .}
\end{align}
Fix $\gamma\ge1$ and define the $\F_{\gamma-1}$--measurable random variable
\begin{align}
{\pi_\gamma \dff \mathbb{P}_\gamma(\tau\in\{\gamma,\gamma+1\}\mid \F_{\gamma-1},\tau\ge\gamma)\ .}
\end{align}
We adopt the convention that $\pi_\gamma=0$ on $\{\tau<\gamma\}$.
Let
\begin{align}
a_\gamma \triangleq \operatorname*{ess\,inf}_{\F_{\gamma-1}}\pi_\gamma\ .
\end{align}
By definition of the essential infimum, $a_\gamma\le \pi_\gamma$ $\mathbb{P}_\gamma$--a.s., and hence $\mathbb{P}_\infty$--a.s.
(because $\mathbb{P}_\gamma$ and $\mathbb{P}_\infty$ coincide on $\F_{\gamma-1}$).
Furthermore, by definition of $\mathcal{L}_{\rm L}(\tau)$ as an infimum over $\gamma$, we have $\mathcal{L}_{\rm L}(\tau)\le a_\gamma$
for every $\gamma$, hence
\begin{equation}
{\mathbf{1}\{\tau\ge\gamma\}\,\mathcal{L}_{\rm L}(\tau)\le \mathbf{1}\{\tau\ge\gamma\}\,\pi_\gamma\qquad \mathbb{P}_\infty\text{-a.s.}}
\label{eq:A:Lordenpointwise}
\end{equation}
Next, take the expectation of \eqref{eq:A:Lordenpointwise}. Since $\mathbf{1}\{\tau\ge\gamma\}$ is
$\F_{\gamma-1}$--measurable and $\mathbb{P}_\gamma$ and $\mathbb{P}_\infty$ coincide on $\F_{\gamma-1}$,
\begin{align}
{\mathbb{E}_\infty \big[\mathbf{1}\{\tau\ge\gamma\}\pi_\gamma\big]
=\mathbb{E}_\gamma \big[\mathbf{1}\{\tau\ge\gamma\}\pi_\gamma\big]
=\mathbb{P}_\gamma(\tau\in\{\gamma,\gamma+1\})\ ,}
\end{align}
where the last equality is the defining property of conditional probability.
Using \eqref{eq:A:rhs} we obtain
\begin{align}
\label{eq:A:Lordenpointwise2}
{\mathbb{E}_\infty \big[\mathbf{1}\{\tau\ge\gamma\}\pi_\gamma\big]
=\mathbb{E}_\infty \Big[\ell_\gamma\,\mathbf{1}\{\tau=\gamma\}+\ell_\gamma\ell_{\gamma+1}\,\mathbf{1}\{\tau=\gamma+1\}\Big]\ .}
\end{align}
Therefore \eqref{eq:A:Lordenpointwise}--\eqref{eq:A:Lordenpointwise2} imply that for every $\gamma\ge1$,
\begin{align}
{\mathbb{P}_\infty(\tau\ge\gamma)\,\mathcal{L}_{\rm L}(\tau)\le
\mathbb{E}_\infty \Big[\ell_\gamma\,\mathbf{1}\{\tau=\gamma\}+\ell_\gamma\ell_{\gamma+1}\,\mathbf{1}\{\tau=\gamma+1\}\Big]\ .}
\end{align}
{Summing over $\gamma\ge 1$ yields}
\begin{align}
{\mathbb{E}_\infty[\tau]\ \mathcal{L}_{\rm L}(\tau)\le \mathbb{E}_\infty \big[\ell_\tau+\ell_{\tau-1}\ell_\tau\big]\ ,}
\end{align}
{and hence $\mathcal{L}_{\rm L}(\tau)\le \mathbb{E}_\infty[\ell_\tau+\ell_{\tau-1}\ell_\tau]/\mathbb{E}_\infty[\tau]$.}

\paragraph{Step 3: Extension to $s\ge1$.}
For $\xi=2$, the criteria \eqref{eq:Pollak_criterion} and \eqref{eq:Lorden_criterion} are sums over $i\in\{1,\dots,s\}$ of window-$2$ success terms.
{Applying the preceding $s=1$ bounds to each summand (corresponding to each onset time $\gamma_i$) and noting that the right-hand side depends only on $\tau$, we obtain}
\begin{align}
{\mathcal{L}_{\rm P}(\tau)\le s\cdot \frac{\mathbb{E}_\infty \big[\ell_\tau+\ell_{\tau-1}\ell_\tau\big]}{\mathbb{E}_\infty[\tau]}\ .}
\end{align}
{Moreover, since $\mathcal{L}_{\rm L}(\tau)\le \mathcal{L}_{\rm P}(\tau)$ by definition (cf. the discussion following \eqref{eq:Lorden_criterion}), the same bound applies to $\mathcal{L}_{\rm L}(\tau)$.}
This completes the proof.

\section{Proof of Lemma~\ref{lemma:eq}}
\label{app:lemma:eq}

{\color{black}Throughout Appendices~\ref{app:lemma:eq}--\ref{app:lemma:suff_infinite}, we use the shorthand $W_t=W_t^{(2)}$.} 
{Let $\nu\in\mathcal{T}$ be a feasible stopping time, i.e., $\mathbb{E}_\infty[\nu]\ge \varepsilon$ and $\mathbb{E}_\infty[\nu]<\infty$.}
{Recall the likelihood ratio $\ell_t=f_1(X_t)/f_0(X_t)$ and the convention $\ell_0=0$. For convenience, define}
\begin{align}
{W_t \dff \ell_t+\ell_{t-1}\ell_t\ ,\qquad t\in\mathbb{N}\ ,\qquad W_0\dff 0\ .}
\end{align}
{Let $\pi_0\dff \mathbb{P}_\infty(\nu=0)$. Then}
\begin{align}
{\mathbb{E}_\infty[W_\nu]=(1-\pi_0)\,\mathbb{E}_\infty[W_\nu\mid \nu>0]\ ,\qquad
\mathbb{E}_\infty[\nu]=(1-\pi_0)\,\mathbb{E}_\infty[\nu\mid \nu>0]\ ,}
\end{align}
{and hence}
\begin{align}
{\frac{\mathbb{E}_\infty[W_\nu]}{\mathbb{E}_\infty[\nu]}
=
\frac{\mathbb{E}_\infty[W_\nu\mid \nu>0]}{\mathbb{E}_\infty[\nu\mid \nu>0]}\ .}
\end{align}
{If $\mathbb{E}_\infty[\nu]=\varepsilon$, we simply set $\nu'=\nu$ and we are done.}
{Otherwise, since $\nu$ is feasible and $\mathbb{E}_\infty[\nu]<\infty$, we have $\mathbb{E}_\infty[\nu]>\varepsilon$. Define}
\begin{align}
{p \dff 1-\frac{\varepsilon}{\mathbb{E}_\infty[\nu]}\in(0,1)\ .}
\end{align}
{Let $B\sim{\rm Bernoulli}(p)$ be independent of $\{X_t\}_{t\in\mathbb{N}}$ (equivalently, measurable at time $0$ under an external randomization). Construct the randomized stopping time}
\begin{align}
{\nu' \dff 
\begin{cases}
0, & B=1,\\
\nu, & B=0.
\end{cases}}
\end{align}
{Then $\nu'$ is a (randomized) stopping time with respect to the filtration augmented by $\sigma(B)$, and it satisfies the following two properties.}

\begin{enumerate}
\item {ARL constraint with equality:} Since $B$ is independent of $\nu$ and $\nu'\equiv \nu$ on $\{B=0\}$,
\begin{align}\label{eq:equal3}
{\mathbb{E}_\infty[\nu']
=(1-p)\,\mathbb{E}_\infty[\nu]
=\varepsilon\ .}
\end{align}

\item {Ratio preservation:} Using $W_0=0$ and independence of $B$,
\begin{align}
{\mathbb{E}_\infty[W_{\nu'}]
=(1-p)\,\mathbb{E}_\infty[W_\nu]\ ,}
\end{align}
{and therefore}
\begin{align}\label{eq:ratio2}
{\frac{\mathbb{E}_\infty[W_{\nu'}]}{\mathbb{E}_\infty[\nu']}
=\frac{(1-p)\mathbb{E}_\infty[W_\nu]}{(1-p)\mathbb{E}_\infty[\nu]}
=\frac{\mathbb{E}_\infty[W_\nu]}{\mathbb{E}_\infty[\nu]}\ .}
\end{align}
{Recalling $W_\tau=\ell_\tau+\ell_{\tau-1}\ell_\tau$, \eqref{eq:ratio2} is exactly \eqref{eq:ratio}.}
\end{enumerate}
The properties \eqref{eq:equal3}--\eqref{eq:ratio2} establish that $\nu'$ is feasible with $\mathbb{E}_\infty[\nu']=\varepsilon$ and achieves the same performance ratio as $\nu$, completing the proof. Note that augmented filtration $\sigma(B)$ does not violate the sequential nature of the test, since $B$ is determined at time zero before any observations are processed.

\section{Proof of Lemma~\ref{lemma:lag}}
\label{proof:lemma:lag}

{\color{black}For notational brevity, write $\widetilde{\mathcal Q}=\widetilde{\mathcal Q}_2$ and $\mathcal V=\mathcal V_2$.} Recall the {ARL-equality} problem {(cf. \eqref{eq:qbar})}
\begin{align}
{\widetilde{\mathcal{Q}}(\varepsilon)}
\;=\;\sup_{\tau:\,{\mathbb{E}_\infty[\tau]}=\varepsilon}
\frac{{\mathbb{E}_\infty[W_\tau]}}{{\mathbb{E}_\infty[\tau]}}
\;=\;\frac{1}{\varepsilon}\sup_{\tau:\,{\mathbb{E}_\infty[\tau]}=\varepsilon}{\mathbb{E}_\infty[W_\tau]}\ ,
\end{align}
{where $W_t\dff \ell_t+\ell_{t-1}\ell_t$ for $t\in\mathbb{N}$ and $W_0\dff 0$ (under the convention $\ell_0=0$).}
{Also recall the unconstrained Lagrangian value (cf. \eqref{eq:r})}
\begin{align}
{\mathcal{V}(\lambda)\;\dff\;\sup_{\tau\in\mathcal{T}} \mathbb{E}_\infty \big[W_\tau-\lambda \tau\big]\ ,} \qquad \lambda>0\ ,
\end{align}
where the supremum is over all {$\tau\in\mathcal{T}$} with {$\mathbb{P}_\infty(\tau<\infty)=1$}. For given $\lambda$ and $\varepsilon$ define the dual function
\begin{equation}
{\phi_\varepsilon(\lambda)\;\dff\;\lambda+\frac{\mathcal{V}(\lambda)}{\varepsilon}\ .}
\label{eq:def-phi-eps}
\end{equation}
To prove this lemma, we show two properties.
\begin{enumerate}
\item[(i)] First, we show that for a fixed $\varepsilon\ge 1$, the function $\phi_\varepsilon$ is convex, lower semicontinuous, and admits at least one minimizer $\lambda_\varepsilon>0$.
\item[(ii)] Secondly, by defining $\rho\triangleq\phi_\varepsilon(\lambda_\varepsilon)$, we show that there exists a (possibly randomized) stopping time $\tau^\star$ such that
\begin{align}
{\mathbb{E}_\infty[\tau^\star]=\varepsilon\ ,}
\qquad
{\frac{\mathbb{E}_\infty[W_{\tau^\star}]}{\mathbb{E}_\infty[\tau^\star]}=\rho\ .}
\end{align}
\end{enumerate}
\paragraph{Step 1: Existence of a minimizer.}
For each stopping time $\tau$, the map $\lambda\mapsto {\mathbb{E}_\infty[W_\tau-\lambda \tau]}$ is affine.
Hence ${\mathcal{V}(\lambda)}$ is the pointwise supremum of affine functions, so {$\mathcal{V}$} is convex and nonincreasing in $\lambda$.
Therefore, $\phi_\varepsilon$ is convex and lower semicontinuous. Furthermore, {since the trivial stopping time $\tau\equiv 0$ belongs to $\mathcal{T}$ and yields $W_0=0$, we have $\mathcal{V}(\lambda)\ge 0$ for all $\lambda>0$.} Hence,
\begin{align}
{\phi_\varepsilon(\lambda)\;\ge\;\lambda\;\xrightarrow[]{\lambda\to\infty}\;+\infty\ .}
\end{align}
Thus $\phi_\varepsilon$ attains a finite minimum at some $\lambda_\varepsilon>0$.

\medskip
\paragraph{Step 2: Weak duality and attainment by (randomized) selection.}
Let $\tau$ satisfy {$\mathbb{E}_\infty[\tau]=\varepsilon$}. Then for any $\lambda>0$,
\begin{align}
{\frac{\mathbb{E}_\infty[W_\tau]}{\varepsilon}}
=\lambda+\frac{{\mathbb{E}_\infty[W_\tau-\lambda\tau]}}{\varepsilon}
\le \lambda+\frac{{\mathcal{V}(\lambda)}}{\varepsilon}
=\phi_\varepsilon(\lambda)\ .
\end{align}
Taking the supremum over such $\tau$ and then the infimum over $\lambda>0$ gives
\begin{equation}
{\sup_{\tau:\,\mathbb{E}_\infty[\tau]=\varepsilon}\frac{\mathbb{E}_\infty[W_\tau]}{\varepsilon}
\;\le\;\inf_{\lambda>0}\phi_\varepsilon(\lambda)
\;=\; \rho\ .}
\label{eq:weak-duality}
\end{equation}
Now fix a minimizer $\lambda_\varepsilon$.
{Let $\mathcal{A}(\lambda_\varepsilon)$ denote the set of optimizers of $\mathcal{V}(\lambda_\varepsilon)$, i.e.,}
\begin{align}
{\mathcal A}(\lambda_\varepsilon)\;\dff\;\Bigl\{\tau\in\mathcal{T}:\ \mathbb{E}_\infty[W_\tau-\lambda_\varepsilon\tau]=\mathcal{V}(\lambda_\varepsilon)\Bigr\}\ .
\end{align}
Standard optimal stopping arguments imply $\mathcal{A}(\lambda_\varepsilon)\neq\emptyset$. In particular, $\mathcal{V}(\lambda_\varepsilon)$ is attained by an optimal stopping time.
For any $\tau\in{\mathcal A}(\lambda_\varepsilon)$, define $D_\tau\triangleq {\mathbb{E}_\infty[\tau]}$ and $N_\tau\triangleq {\mathbb{E}_\infty[W_\tau]}$.
    Then $N_\tau-\lambda_\varepsilon D_\tau={\mathcal{V}(\lambda_\varepsilon)}$ for all $\tau\in{\mathcal A}(\lambda_\varepsilon)$. Accordingly, consider the affine functions
\begin{align}
f_\tau(\lambda)\;\triangleq\;\lambda+\frac{N_\tau-\lambda D_\tau}{\varepsilon}
=\Bigl(1-\frac{D_\tau}{\varepsilon}\Bigr)\lambda+\frac{N_\tau}{\varepsilon}\ .
\end{align}
{Then $\phi_\varepsilon(\lambda)=\sup_{\tau\in\mathcal{T}} f_\tau(\lambda)$, and the active set at $\lambda_\varepsilon$ coincides with $\mathcal{A}(\lambda_\varepsilon)$ (since $f_\tau(\lambda_\varepsilon)=\phi_\varepsilon(\lambda_\varepsilon)$ if and only if $N_\tau-\lambda_\varepsilon D_\tau=\mathcal{V}(\lambda_\varepsilon)$).} 
Because $\phi_\varepsilon$ is convex and minimized at $\lambda_\varepsilon$, {$0$ belongs to the subdifferential $\partial \phi_\varepsilon(\lambda_\varepsilon)$, which is the convex hull of the active slopes}.
Equivalently, letting
\begin{align}
m_\tau\;\triangleq\;1-\frac{D_\tau}{\varepsilon}\ ,
\qquad \tau\in{\mathcal A}(\lambda_\varepsilon)\ ,
\end{align}
we must have that the active slopes straddle $0$; i.e., letting $m_{\min}\triangleq \inf_{\tau\in{\mathcal A}(\lambda_\varepsilon)} m_\tau$ and
$m_{\max}\triangleq \sup_{\tau\in{\mathcal A}(\lambda_\varepsilon)} m_\tau$, we have $m_{\min}\le 0\le m_{\max}$. 
If there exists $\bar\tau\in{\mathcal A}(\lambda_\varepsilon)$ with $m_{\bar\tau}=0$, then $D_{\bar\tau}=\varepsilon$ and we may take
$\tau^\star=\bar\tau$. Otherwise choose $\tau_1,\tau_2\in{\mathcal A}(\lambda_\varepsilon)$ with $m_{\tau_1}>0$ and $m_{\tau_2}<0$,
and pick $\alpha\in(0,1)$ such that
\begin{align}
\alpha m_{\tau_1}+(1-\alpha)m_{\tau_2}=0
\quad\Longleftrightarrow\quad
\alpha D_{\tau_1}+(1-\alpha)D_{\tau_2}=\varepsilon\ .
\end{align}
Define the time-$0$ randomized stopping time
\begin{align}
\tau^\star=\begin{cases}
\tau_1, & \text{with probability }\alpha,\\
\tau_2, & \text{with probability }1-\alpha,
\end{cases}\ .
\end{align}
Then {$\mathbb{E}_\infty[\tau^\star]=\varepsilon$} and
\begin{align}
{\mathbb{E}_\infty[W_{\tau^\star}-\lambda_\varepsilon\tau^\star]}
=\alpha(N_{\tau_1}-\lambda_\varepsilon D_{\tau_1})+(1-\alpha)(N_{\tau_2}-\lambda_\varepsilon D_{\tau_2})
={\mathcal{V}(\lambda_\varepsilon)}\ .
\end{align}
Consequently,
\begin{align}
{\frac{\mathbb{E}_\infty[W_{\tau^\star}]}{\varepsilon}}
=\lambda_\varepsilon+\frac{{\mathbb{E}_\infty[W_{\tau^\star}-\lambda_\varepsilon\tau^\star]}}{\varepsilon}
=\lambda_\varepsilon+\frac{{\mathcal{V}(\lambda_\varepsilon)}}{\varepsilon}
=\phi_\varepsilon(\lambda_\varepsilon)
=\rho\ .
\end{align}
Combining with \eqref{eq:weak-duality} shows that $\tau^\star$ achieves the primal optimum value $\rho$,
{and therefore solves $\widetilde{\mathcal{Q}}(\varepsilon)$.}
\section{Proof of Lemma~\ref{lemma:bellman_identity}}
\label{app:lemma:bellman}

All conditional expectations are under $\mathbb{P}_\infty$. {Recall $W_t\dff \ell_t+\ell_{t-1}\ell_t$ (with $\ell_0=0$).}
{Note that $W_t$ is integrable since $\mathbb{E}_\infty[\ell_t]=1$ and, by independence under $\mathbb{P}_\infty$,}
$\mathbb{E}_\infty[\ell_{t-1}\ell_t]=\mathbb{E}_\infty[\ell_{t-1}]\,\mathbb{E}_\infty[\ell_t]\in\{0,1\}$,
resulting in $\mathbb{E}_\infty[W_t]=\mathbb{E}_\infty[\ell_t]+\mathbb{E}_\infty[\ell_{t-1}\ell_t]\leq 2<\infty$.

\paragraph{(a) Lower bound.} First, choosing $\tau\equiv t$ in \eqref{eq:inf_JR_def} indicates that $J_t\ge {W_t}$ a.s.
Next, fix {$\eta>0$}.
By the definition of $J_{t+1}(\F_{t+1})$ there exists $\tau^{{\eta}}\in\mathcal T_{t+1}$ such that
\begin{align}
J_{t+1}(\F_{t+1})\ \le\ {\mathbb{E}_\infty} \big[{W_{\tau^{\eta}}}-\lambda\big(\tau^{{\eta}}-(t{+}1)\big)\,\big|\,\F_{t+1}\big]\ +\ {\eta}
\quad\text{a.s.}
\label{eq:inf_eps_opt}
\end{align}
Using $\tau^{{\eta}}$ as a feasible action at time $t{+}1$ and then invoking the tower property we obtain
\begin{align}
J_t(\F_t)\ &\ge\ {\mathbb{E}_\infty} \big[{W_{\tau^{\eta}}}-\lambda(\tau^{{\eta}}-t)\,\big|\,\F_t\big]\\
&=\ -\lambda\ +\ {\mathbb{E}_\infty} \Big[\ {\mathbb{E}_\infty} \big[{W_{\tau^{\eta}}}-\lambda\big(\tau^{{\eta}}-(t{+}1)\big)\,\big|\,\F_{t+1}\big]\ \Big|\ \F_t\Big]\\
&\ge\ -\lambda\ +\ {\mathbb{E}_\infty} \big[J_{t+1}(\F_{t+1})\,\big|\,\F_t\big]\ -\ {\eta}
\ \\
& =\ -\lambda+R_t-{\eta}\ .
\label{eq:inf_J_geq_cont}
\end{align}
Letting {$\eta\to 0$} yields $J_t\ge -\lambda+R_t$ a.s. Combining with $J_t\ge {W_t}$ we have
\begin{align}
J_t\ \ge\ \max\{{W_t},\,-\lambda+R_t\}\quad\text{a.s.}
\label{eq:inf_bellman_geq}
\end{align}

\paragraph{(b) Upper bound.} Take an arbitrary $\tau\in\mathcal T_t$ and consider the following decomposition according to the events $\{\tau=t\}$ and $\{\tau\ge t{+}1\}$:
\begin{align}
{\mathbb{E}_\infty} \big[{W_\tau}-\lambda(\tau-t)\,\big|\,\F_t\big]
&=\ {\mathbb{E}_\infty} \big[\mathbf{1}_{\{\tau=t\}}\,{W_t}\ +\ \mathbf{1}_{\{\tau\ge t+1\}}\,[{W_\tau}-\lambda-\lambda(\tau-(t{+}1))]\,\big|\,\F_t\big]\\
&\le\ \mathbf{1}_{\{\tau=t\}}\,{W_t}\ +\ \mathbf{1}_{\{\tau\ge t+1\}}\Big(-\lambda+{\mathbb{E}_\infty} \big[J_{t+1}(\F_{t+1})\,\big|\,\F_t\big]\Big)
\nonumber\\
&=\ \mathbf{1}_{\{\tau=t\}}\,{W_t}\ +\ \mathbf{1}_{\{\tau\ge t+1\}}(-\lambda+R_t)\\
& \le\ \max\{{W_t},\,-\lambda+R_t\}\quad\text{a.s.}
\label{eq:inf_bellman_leq_one_tau}
\end{align}
where we used that $\{\tau=t\},\{\tau\ge t+1\}\in\F_t$, the tower property, the definition of $J_{t+1}$ in~\eqref{eq:inf_JR_def}, and the definition of $R_t$ in~\eqref{eq:bell-Rt}.
Taking the essential supremum over $\tau\in\mathcal T_t$ on the left-hand side yields
\begin{align}
J_t\ \le\ \max\{{W_t},\,-\lambda+R_t\}\quad\text{a.s.}
\label{eq:inf_bellman_leq}
\end{align}
Combining \eqref{eq:inf_bellman_geq} and \eqref{eq:inf_bellman_leq} proves the Bellman identity
\begin{align}
J_t(\F_t)\ =\ \max\big\{{W_t},\ -\lambda+R_t\big\}\quad\text{a.s. for all }t\ge1.
\label{eq:inf_Bellman_identity}
\end{align}

\section{Proof of Lemma~\ref{lemma:5}}
\label{app:lemma:5}

{For each $t\ge1$, we had defined the admissible set in~\ref{eq:admissible} as}
\begin{align}
{\mathcal{T}_t \dff \{\tau\in\mathcal{T}:\ \tau \ge t \quad \text{ a.s.}\}\ ,}
\end{align}
{Recall $W_t\dff \ell_t+\ell_{t-1}\ell_t$ for $t\in\mathbb{N}$ (with $\ell_0=0$).}
Define the infinite-horizon value function
\begin{align}
{J_t \dff \operatorname*{ess\,sup}_{\tau\in\mathcal{T}_t} \mathbb{E}_\infty \big[W_\tau-\lambda(\tau-t)\,\big|\, \F_t\big]\ ,}
\qquad
{R_t \dff \mathbb{E}_\infty \big[J_{t+1}\,\big|\, \F_t\big]\ .}
\end{align}
Lemma~\ref{lemma:bellman_identity} then reads
\begin{equation}
J_t=\max\{W_t,\,-\lambda+R_t\}\qquad\text{a.s. for all }t\ge 1\ .
\label{eq:E:bellman}
\end{equation}
Define the candidate stopping time
\begin{equation}
{\tau_\lambda^\ast \dff \inf\{t\ge1:\ W_t\ge -\lambda+R_t\}\ .}
\label{eq:E:taustar}
\end{equation}
{Because $\{W_t\ge -\lambda+R_t\}\in\F_t$ and $\tau_\lambda^\ast$ is the first entry time into an $\{\F_t\}$--measurable set, $\tau_\lambda^\ast$ is an $\{\F_t\}$--stopping time.}

\medskip
\paragraph{Step 1: Reduce the problem to a standard optimal stopping form.}
Define the reward process
\begin{align}
{Y_t \dff W_t-\lambda t\ ,\qquad t\ge1\ .}
\end{align}
For each $t\ge1$ define its Snell envelope
\begin{align}
{V_t \dff \operatorname*{ess\,sup}_{\tau\in\mathcal T_t} \mathbb{E}_\infty \big[Y_\tau\,\big|\,\F_t\big]\ .}
\end{align}
Note that
\begin{align}
J_t
& =\operatorname*{ess\,sup}_{\tau\in\mathcal T_t} \mathbb{E}_\infty \big[W_\tau-\lambda(\tau-t)\,\big|\,\F_t\big] \\
& =\lambda t + \operatorname*{ess\,sup}_{\tau\in\mathcal T_t}\mathbb{E}_\infty \big[W_\tau-\lambda\tau\,\big|\,\F_t\big] \\
& =\lambda t + V_t\ .
\end{align}
Hence, the stopping condition {$J_t=W_t$} is equivalent to {$V_t=Y_t$}. By \eqref{eq:E:bellman},
\begin{align}
\{J_t=W_t\}=\{W_t\ge -\lambda+R_t\}\ ,
\end{align}
so the stopping time in \eqref{eq:E:taustar} can equivalently be written as
\begin{align}
{\tau_\lambda^\ast=\inf\{t\ge1:\ V_t=Y_t\}\ .}
\end{align}

\medskip
\paragraph{Step 2: Verify the hypotheses for the Snell-envelope optimality theorem.}
Since {$W_t\ge0$}, and by using the notation $(X)^+=\max(0,X)$, we have {$Y_t^+=(W_t-\lambda t)^+\le W_t$}.
{Under $\mathbb{P}_\infty$, $\mathbb{E}_\infty[W_t]=\mathbb{E}_\infty[\ell_t]+\mathbb{E}_\infty[\ell_{t-1}\ell_t]\leq 2$ (by independence), hence}
\begin{align}
{\sup_{t\ge1}\mathbb{E}_\infty[Y_t^+]\le \sup_{t\ge1}\mathbb{E}_\infty[W_t]\leq 2<\infty\ .}
\end{align}
This standard integrability condition ensures that the Snell envelope is well defined (of class $\mathcal{D}$), so optional sampling applies to the stopped envelope.

\medskip
\paragraph{Step 3: Optimality.}
By the discrete-time Snell envelope theorem (optimal stopping in discrete time),
$\{V_t\}$ is the smallest $\{\F_t\}$--supermartingale that dominates $\{Y_t\}$, and the stopping time
{$\tau_\lambda^\ast=\inf\{t\ge1:\ V_t=Y_t\}$} is optimal:
\begin{equation}
{\mathbb{E}_\infty[Y_{\tau_\lambda^\ast}]
=\sup_{\tau\in\mathcal T_1}\mathbb{E}_\infty[Y_\tau]
=\mathbb{E}_\infty[V_1]\ .}
\label{eq:E:snellopt}
\end{equation}
Equivalently,
\begin{align}
{\sup_{\tau\in\mathcal T_1}\mathbb{E}_\infty[W_\tau-\lambda\tau]
=\mathbb{E}_\infty[W_{\tau_\lambda^\ast}-\lambda\tau_\lambda^\ast]\ .}
\end{align}
{This shows that $\tau_\lambda^\ast$ attains the supremum in $\mathcal{V}(\lambda)$ (cf. \eqref{eq:r}), completing the proof of Lemma~\ref{lemma:5}.}

\section{Proof of Lemma~\ref{lemma:suff_infinite}}
\label{app:lemma:suff_infinite}

\textbf{Step 1: Markov structure.}
Define the two-dimensional state process
\begin{align}
{Z_t \dff  (\ell_{t-1},\ell_t)\ , \qquad t\in\mathbb{N}\ .}
\end{align}
Since $\{\ell_t\}$ are i.i.d.\ under ${\mathbb{P}_\infty}$, the process
$\{Z_t:t\in {\mathbb{N}}\}$ is a time-homogeneous Markov chain on
{$[0,\infty)^2$} with transition
$Z_{t+1} = (\ell_t,\ell_{t+1})$, where $\ell_{t+1}$ is independent of $Z_t$ and distributed as $\ell_1$.
{Moreover, for $\xi=2$ the one-step reward is $W_t\dff \ell_t + \ell_{t-1}\ell_t$, which is a measurable function of $Z_t$.}

\paragraph{Step 2: Markov optimal stopping formulation.}
Recall the value-to-go function
\begin{align}
J_t(\F_t)
= \operatorname*{ess\,sup}_{\tau\in\mathcal T_t}
{\mathbb{E}_\infty} \left[ {W_\tau} - \lambda(\tau-t)\,\middle|\, \F_t\right]\ .
\end{align}
Because $Z_t$ is Markov and the reward {$W_t$} depends only on $Z_t$, the infinite-horizon optimal stopping problem is a Markov optimal
stopping problem (see, e.g., \cite[Ch.~III]{ShiryaevOptimal}). Hence, the value function depends on the past only through the current
Markov state $Z_t$. Therefore, there exists a Borel function $J:[0,\infty)^2\to\mathbb R$
such that
\begin{align}
J_t(\F_t) = J(\ell_{t-1},\ell_t)
\quad\mbox{a.s.}
\end{align}

\paragraph{Step 3: Structure of the continuation value.}
By definition,
\begin{align}
R_t(\F_t) = {\mathbb{E}_\infty} \big[J_{t+1}(\F_{t+1})\,\big|\, \F_t\big]\ .
\end{align}
Using the representation above,
\begin{align}
J_{t+1}(\F_{t+1})
= J(\ell_t,\ell_{t+1})\ .
\end{align}
Since $\ell_{t+1}$ is independent of $\F_t$ and has the same law as $\ell_1$, we obtain
\begin{align}
R_t(\F_t)
= {\mathbb{E}_\infty} \big[J(\ell_t,\ell_{t+1})\,\big|\, \ell_t\big]
=: R(\ell_t)\ ,
\end{align}
where
\begin{align}
{R(y) \dff  \mathbb{E}_\infty  \left[J(y,\tilde\ell)\right] \ ,}
\qquad \tilde\ell \stackrel{d}{=} \ell_1\ .
\end{align}
Thus $R_t$ depends on $\F_t$ only through $\ell_t$\ .

\paragraph{Step 4: Bellman representation.}
From the Bellman identity, we have 
\begin{align}
{J_t = \max\{W_t,\,-\lambda + R_t\}\ .}
\end{align}
Since {$W_t$} is a function of $(\ell_{t-1},\ell_t)$ and $R_t=R(\ell_t)$, it follows that
\begin{align}
J_t = J(\ell_{t-1},\ell_t)\ ,
\end{align}
for a suitable Borel function $J$. Combining the above steps proves that $(\ell_{t-1},\ell_t)$ is a
sufficient statistic for $J_t$ and $\ell_t$ is a sufficient statistic for $R_t$, establishing the desired result.

\section{Proof of Lemma~\ref{lemma:feasible}}
\label{app:lemma:feasible}

For the Shewhart test with stopping time \eqref{eq:stop}, define {$p\dff \mathbb{P}_\infty(\ell_1\ge \alpha)$.}
{Under $\mathbb{P}_\infty$, the likelihood ratios $\{\ell_t\}_{t\in\mathbb{N}}$ are i.i.d., hence $\tau_{\rm s}$ is geometric on $\{1,2,\dots\}$ with success probability $p$.}
Therefore,
\begin{align}
\label{feas1}
\mathbb{E}_\infty[\tau_{\rm s}] &= \sum_{t=1}^\infty \mathbb{P}_\infty(\tau_{\rm s}\ge t)\\
\label{feas2}
& = \sum_{t=1}^\infty (1-p)^{t-1}\\
\label{feas3}
&= \frac{1}{p}\\
\label{feas4}
&= \varepsilon\ ,
\end{align}
where~\eqref{feas2} follows because {$\mathbb{P}_\infty(\tau_{\rm s}\ge t)=(1-p)^{t-1}$},~\eqref{feas3} is the sum of a geometric series, and~\eqref{feas4} holds by the threshold choice \eqref{eq:threshold}, i.e., $p=\varepsilon^{-1}$.

\section{Proof of Lemma~\ref{lemma:mono_convex}}
\label{proof:lemma:mono_convex}

Fix $\lambda>0$ and $\xi\ge 2$. Work under $\mathbb{P}_\infty$, under which $\{\ell_t:t\in\mathbb{N} \}$ are i.i.d. and $\ell_t\ge 0$.
For each finite horizon $H\ge 1$, consider an \textcolor{black}{auxiliary finite-horizon truncation of the infinite-horizon optimal-stopping problem, used only for the backward-induction argument}, in which the decision-maker must stop no later than time $H$. Define the truncated reward-to-go $J_t^{H}$ and continuation value $R_t^{H}$ by backward recursion as follows. First, define the (time-$t$) immediate stopping reward
\begin{align}\label{eq:Stxi}
{W_t^{(\xi)}  \dff} \sum_{k=1}^{\xi}\;\prod_{{j}=0}^{k-1}\ell_{t-{j}}
= \ell_t\Big(1+\sum_{k=1}^{\xi-1}\prod_{{j}=1}^{k}\ell_{t-{j}}\Big)\ ,
\end{align}
{where we use the convention $\ell_u=0$ for $u\le 0$ (equivalently, the above products vanish whenever an index is nonpositive).}
Then set the terminal condition
\begin{align}
{J_H^{H} \dff W_H^{\xi}\ .}
\end{align}
For $t\in\{1,\dots, H-1\}$, define
\begin{align}
{R_t^{H} }&{\dff \mathbb{E}_\infty \big[J_{t+1}^{H} \,\big|\, \F_t\big]\ , \quad\mbox{and} \quad 
J_t^{H} \dff \max\big\{W_t^{(\xi)} ,\; -\lambda + R_t^{H}\big\}\ .}
\end{align}

\paragraph{Step 1: {$W_t^{(\xi)} $} is affine in $\ell_t$ for fixed past.}
Fix a time $t$ and fix the past likelihood ratios {$(\ell_{t-\xi+1},\ldots,\ell_{t-1})$}.
From \eqref{eq:Stxi}, {$W_t^{(\xi)} $} can be written as
\begin{align}
{W_t^{(\xi)} =\ell_t\,c_t\ ,\qquad 
c_t\dff 1+\sum_{k=1}^{\xi-1}\prod_{j=1}^{k}\ell_{t-j}\ge 1\ ,}
\end{align}
which is affine in $\ell_t$ (hence convex) and nondecreasing in $\ell_t$ {for fixed $c_t$}.

\paragraph{Step 2: Backward induction for convexity and monotonicity.}
We prove by backward induction on $t$ that, for each fixed {$(\ell_{t-\xi+1},\ldots,\ell_{t-1})$},
both mappings
\begin{align}
\ell_t \mapsto J_t^{H} \ , \quad\text{and}\quad \ell_t \mapsto R_t^{H}\ ,
\end{align}
are convex and nondecreasing.

\begin{itemize}
    \item At $t=H$, we have $J_H^{H}={W_H^{\xi}}$, which is affine (hence convex) and nondecreasing in $\ell_H$ by Step 1.
    \item Now assume the claim holds at time $t+1$ (with $t\le H-1$).
    \item Since $R_t^{H}=\mathbb{E}_\infty[J_{t+1}^{H}\mid\F_t]$ and $\ell_{t+1}$ is independent of $\F_t$ under $\mathbb{P}_\infty$, we can write
    \begin{align}
    {R_t^{H}(\ell_t;\ell_{t-\xi+1}^{t-1})
    = \mathbb{E}_\infty \Big[J_{t+1}^{H}\big(\ell_{t+1};\ell_{t-\xi+2}^{t}\big)\,\Big|\,\ell_{t-\xi+1}^{t}\Big]\ ,}
    \end{align}
    {where the dependence on $\ell_t$ enters only through the shifted history $\ell_{t-\xi+2}^{t}=(\ell_{t-\xi+2},\ldots,\ell_t)$.}
    {By the induction hypothesis, for each fixed realization of $\ell_{t+1}$ and the earlier ratios, the map $\ell_t\mapsto J_{t+1}^{H}(\ell_{t+1};\ell_{t-\xi+2}^{t})$ is convex and nondecreasing.}
    Therefore, $R_t^{H}$, being an average of convex and nondecreasing functions of $\ell_t$, is itself convex and nondecreasing in $\ell_t$ when the earlier likelihood ratios are held fixed.
    Finally,
    \begin{align}
    J_t^{H}=\max\{{W_t^{(\xi)} },-\lambda+R_t^{H}\}
    \end{align}
    is the pointwise maximum of two convex and nondecreasing functions of $\ell_t$, hence it is itself convex and nondecreasing.
\end{itemize}

\medskip
\noindent\textbf{Step 3: Infinite horizon.}
Let {$J_t^{\xi}$} and {$R_t^{\xi}$} denote the corresponding infinite-horizon reward-to-go and continuation value.
As $H$ increases, \textcolor{black}{these auxiliary finite-horizon truncations} admit larger classes of stopping times, so $J_t^{H}$ increases pointwise to {$J_t^{\xi}$}
(and consequently $R_t^{H}$ increases pointwise to {$R_t^{\xi}$}).
A pointwise supremum of convex (respectively, nondecreasing) functions is convex (respectively, nondecreasing),
so {$J_t^{\xi}$} and {$R_t^{\xi}$} inherit these properties in $\ell_t$ when the past likelihood ratios are held fixed. This completes the proof.

\section{Proof of Lemma~\ref{lemma:lipschitz_R}}
\label{proof:lemma:lipschitz_R}

Recall the definition of $R$ in~\eqref{eq:bellman_xi2_comp}
\begin{align}
R(\ell)={\mathbb{E}_\infty} \left[\max\{L(1+\ell),\,-\lambda+R(L)\}\right]\ ,
\qquad \mbox{where} \quad L \overset{d}{=} \ell_1 \text{ under } {\mathbb{P}_\infty}\ .
\end{align}
Fix $\ell_2\ge \ell_1\ge 0$ and for each realization $L\ge 0$ define 
\begin{align}
\phi_L(\ell)\triangleq \max\{L(1+\ell),\,-\lambda+R(L)\}\ .
\end{align}
Since $L(1+\ell)$ is nondecreasing in $\ell$ and the max of two functions preserves monotonicity,
$\phi_L(\ell)$ is nondecreasing in $\ell$, hence
\begin{align}
R(\ell_2)-R(\ell_1)={\mathbb{E}_\infty}[\phi_L(\ell_2)-\phi_L(\ell_1)]\ge 0\ ,
\end{align}
so $R(\cdot)$ is nondecreasing. For the Lipschitz bound, use the elementary inequality (valid for all real $a\ge a'$ and any $b$):
\begin{align}
0 \le \max\{a,b\}-\max\{a',b\}\le a-a'\ .
\end{align}
Applying this with $a=L(1+\ell_2)$, $a'=L(1+\ell_1)$ and $b=-\lambda+R(L)$ yields
\begin{align}
0 \le \phi_L(\ell_2)-\phi_L(\ell_1)\le L(\ell_2-\ell_1)\ .
\end{align}
Taking the expectation, we obtain
\begin{align}
0 \le R(\ell_2)-R(\ell_1)
\le (\ell_2-\ell_1)\,{\mathbb{E}_\infty}[L]\ .
\end{align}
Finally, ${\mathbb{E}_\infty}[L]={\mathbb{E}_\infty}[\ell_1]=1$ since $\ell_1=f_1(X_1)/f_0(X_1)$ is a likelihood ratio under ${\mathbb{P}_\infty}$.
Therefore,
\begin{align}
0 \le R(\ell_2)-R(\ell_1)\le \ell_2-\ell_1\ ,
\end{align}
i.e., $R$ is nondecreasing and $1$-Lipschitz on $[0,\infty)$.

\section{Proof of Theorem~\ref{lemma:upper_bound}}
\label{app:lemma:upper_bound}

Consider the equality-constrained problem
\begin{align}
\sup_{\tau\in{\mathcal{T}}} \ \frac{{\mathbb{E}_\infty[\ell_\tau]}}{{\mathbb{E}_\infty[\tau]}}\ ,
\quad \text{s.t.} \quad {\mathbb{E}_\infty[\tau]} = \varepsilon \ .
\end{align}
Since {$\mathbb{E}_\infty[\tau]=\varepsilon$} is fixed, this is equivalent to
\begin{align}
\sup_{\tau \in {\mathcal{T}}:\ {\mathbb{E}_\infty[\tau]}=\varepsilon} \ {\mathbb{E}_\infty[\ell_\tau]}\ .
\end{align}
For $\lambda>0$ define the unconstrained Lagrangian value
\begin{align}
{\mathcal{V}_1(\lambda)}
\triangleq
\sup_{\tau\in{\mathcal{T}}:\ \mathbb{P}_\infty(\tau<\infty)=1}
{\mathbb{E}_\infty[\ell_\tau - \lambda \tau]}\ .
\end{align}
{It is noteworthy that we use $\mathcal{V}_1$ to avoid confusion with the continuation function $R(\cdot)$ used elsewhere.}
For $t\ge1$, define the value-to-go function
\begin{align}
{J_t(\F_t)}
\triangleq
\operatorname*{ess\,sup}_{{\tau\in\mathcal{T}_t}}
{\mathbb{E}_\infty} \left[
\ell_\tau - \lambda(\tau-t)
\,\middle|\, \F_t
\right]\ ,
\end{align}
where {$\mathcal{T}_t\dff\{\tau\in\mathcal{T}:\tau\ge t\ \text{a.s.}\}$}.
Standard optimal stopping arguments imply the Bellman identity
\begin{align}
{J_t(\F_t)} = \max\Big\{\ell_t,\, -\lambda + {\mathbb{E}_\infty[J_{t+1}(\F_{t+1})\mid\F_t]} \Big\}\ .
\end{align}
Under $\mathbb{P}_\infty$, $\{\ell_t:t\in\mathbb{N}\}$ are i.i.d., and future samples are independent of $\F_t$. Hence, by stationarity,
\begin{align}
{\mathbb{E}_\infty[J_{t+1}(\F_{t+1})\mid\F_t]}
= {\mathbb{E}_\infty[J_{t+1}(\F_{t+1})]}
= {\mathbb{E}_\infty[J_{1}(\F_{1})]}\dff c_\lambda\ .
\end{align}
Therefore,
\begin{align}
J_t(\F_t) = \max\{\ell_t,\, -\lambda + c_\lambda\}\ .
\end{align}
Consequently, an optimal stopping time for {$\mathcal{V}_1(\lambda)$} is
\begin{align}
{\tau_\lambda
=
\inf\{t\ge1:\ell_t \ge c_\lambda-\lambda \}\ ,}
\end{align}
i.e., a Shewhart rule with threshold {$\alpha_\lambda\dff c_\lambda-\lambda$}.
Taking expectations of the Bellman fixed-point equation yields
\begin{align}
c_\lambda
=
{\mathbb{E}_\infty}[\max\{\ell_1,c_\lambda-\lambda\}]\ ,
\end{align}
equivalently,
\begin{align}
\lambda
= {\mathbb{E}_\infty}\big[(\ell_1-\alpha_\lambda)^+\big]\ .
\end{align}
Finally, we enforce the condition {$\mathbb{E}_\infty[\tau]=\varepsilon$}.
Choose {$\alpha$} such that
\begin{align}
{\mathbb{P}_\infty(\ell_1 \ge \alpha)
=
\varepsilon^{-1}\ .}
\end{align}
For the Shewhart stopping time {$\tau_{\rm s}\dff \inf\{t\ge1:\ell_t\ge \alpha\}$},
\begin{align}
{\mathbb{E}_\infty[\tau_{\rm s}]
=
\frac{1}{\mathbb{P}_\infty(\ell_1 \ge \alpha)}
=
\varepsilon\ .}
\end{align}
Now set
\begin{align}
{\lambda \dff \mathbb{E}_\infty[\max\{\ell_1,\alpha\}] - \alpha\ .}
\end{align}
Then {$c_\lambda=\mathbb{E}_\infty[\max\{\ell_1,\alpha\}]$} and {$\alpha_\lambda=c_\lambda-\lambda=\alpha$}, so {$\tau_{\rm s}$} attains the supremum in {$\mathcal{V}_1(\lambda)$} and satisfies the ARL constraint.
Hence, for any $\tau$ with {$\mathbb{E}_\infty[\tau]=\varepsilon$},
\begin{align}
{\mathbb{E}_\infty[\ell_\tau]}
&=
{\mathbb{E}_\infty[\ell_\tau - \lambda \tau]} + \lambda \varepsilon \\
&\le
{\mathbb{E}_\infty[\ell_{\tau_{\rm s}} - \lambda \tau_{\rm s}]} + \lambda \varepsilon\\
& =
{\mathbb{E}_\infty[\ell_{\tau_{\rm s}}]\ ,}
\end{align}
which proves that the Shewhart test {$\tau_{\rm s}$} solves \eqref{eq:p3}.

\section{Proof of Theorem~\ref{th:Sh}}
\label{app:th:Sh}

{Let $\tau_{\rm s} \dff \inf\{t\ge1:\ell_t \ge \alpha\}$,}
where $\alpha$ satisfies
\begin{align}
{\mathbb{P}_\infty(\ell_1 \ge \alpha) = \varepsilon^{-1}\ .}
\end{align}
For $\xi=1$, success at $\gamma_i$ means {$\tau=\gamma_i$}. Under the Shewhart rule,
\begin{align}
{\{\tau_{\rm s}=\gamma_i\}
=
\{\tau_{\rm s} \ge \gamma_i\}
\cap
\{\ell_{\gamma_i} \ge \alpha\}\ .}
\end{align}
{Under any admissible configuration, conditional on $\F_{\gamma_i-1}$ and $\{\tau_{\rm s}\ge\gamma_i\}$, the random variable $\ell_{\gamma_i}$ depends only on $X_{\gamma_i}\sim f_1$ and is independent of the past.}
Hence
\begin{align}
{\mathbb{P}_\theta(\tau_{\rm s}=\gamma_i
\mid
\F_{\gamma_i-1},\tau_{\rm s}\ge\gamma_i)
=
\mathbb{P}_1(\ell_1\ge\alpha)\ ,}
\end{align}
{a constant (not depending on $\gamma_i$ or $\F_{\gamma_i-1}$).}
Therefore
\begin{align}
{\mathcal{L}_{\rm L}(\tau_{\rm s})
=
\mathcal{L}_{\rm P}(\tau_{\rm s})
=
s\,\mathbb{P}_1(\ell_1\ge\alpha)\ .}
\end{align}
Using the {same change-of-measure argument as in Appendix~\ref{app:average_identity} (specialized to $\xi=1$)}, for any stopping time $\tau$ with $0<\mathbb{E}_\infty[\tau]<\infty$,
\begin{align}
{\mathcal{L}_{\rm L}(\tau)
\le
\mathcal{L}_{\rm P}(\tau)
\le
s \frac{\mathbb{E}_\infty[\ell_\tau]}{\mathbb{E}_\infty[\tau]}\ .}
\end{align}
From Theorem~\ref{lemma:upper_bound}, {the Shewhart rule solves the ratio problem with ARL equality}, and in particular,
\begin{align}
{\sup_{\tau:\ \mathbb{E}_\infty[\tau]=\varepsilon}
\frac{\mathbb{E}_\infty[\ell_\tau]}{\mathbb{E}_\infty[\tau]}
=
\frac{\mathbb{E}_\infty[\ell_{\tau_{\rm s}}]}{\varepsilon}\ .}
\end{align}
Next, note that under $\mathbb{P}_\infty$, $\tau_{\rm s}$ is geometric with success probability $\mathbb{P}_\infty(\ell_1\ge \alpha)=\varepsilon^{-1}$, hence {the likelihood ratio at the stopping time has the stationary overshoot identity}
\begin{align}
{\mathbb{E}_\infty[\ell_{\tau_{\rm s}}]
=
\mathbb{E}_\infty[\ell_1 \mid \ell_1\ge\alpha]
=
\frac{\mathbb{E}_\infty[\ell_1 \mathbf 1\{\ell_1\ge\alpha\}]}
{\mathbb{P}_\infty(\ell_1\ge\alpha)}\ .}
\end{align}
Using the likelihood-ratio change of measure,
\begin{align}
{\mathbb{E}_\infty[\ell_1 \mathbf 1\{\ell_1\ge\alpha\}]
=
\mathbb{P}_1(\ell_1\ge\alpha)\ ,}
\end{align}
and therefore
\begin{align}
{\mathbb{E}_\infty[\ell_{\tau_{\rm s}}]
=
\frac{\mathbb{P}_1(\ell_1\ge\alpha)}
{\mathbb{P}_\infty(\ell_1\ge\alpha)}
=
\varepsilon\,\mathbb{P}_1(\ell_1\ge\alpha)\ .}
\end{align}
Consequently,
\begin{align}
{\sup_{\tau:\ \mathbb{E}_\infty[\tau]\ge\varepsilon}
\mathcal{L}_{\rm P}(\tau)
=
s\,\mathbb{P}_1(\ell_1\ge\alpha)
=
\mathcal{L}_{\rm P}(\tau_{\rm s})\ ,}
\end{align}
{where the supremum over $\mathbb{E}_\infty[\tau]\ge\varepsilon$ reduces to $\mathbb{E}_\infty[\tau]=\varepsilon$ by Lemma~\ref{lemma:eq}.}
The same argument applies to $\mathcal{L}_{\rm L}$, yielding
\begin{align}
{\mathcal{L}_{\rm L}(\tau_{\rm s})
=
\mathcal{L}_{\rm P}(\tau_{\rm s})
=
\sup_{\tau:\ \mathbb{E}_\infty[\tau]\ge\varepsilon}
\mathcal{L}_{\rm L}(\tau)
=
\sup_{\tau:\ \mathbb{E}_\infty[\tau]\ge\varepsilon}
\mathcal{L}_{\rm P}(\tau)\ .}
\end{align}

{\color{black}
\section{Proof of Lemma~\ref{lemma:constant_tsr_arl}}
\label{proof:lemma:constant_tsr_arl}

Write $\tau=\tau^{\rm c}_{A,\xi}$. Define the classical Shiryaev--Roberts statistic by
\begin{align}
S_t^{\rm SR}
&\triangleq
\sum_{j=1}^{t}
\prod_{k=j}^{t}\ell_k\; ,
\qquad
S_0^{\rm SR}
\triangleq
0\; .
\label{eq:full_sr_statistic_appendix}
\end{align}
Since the TSR statistic retains only the terms associated with the most recent $\xi$ candidate onsets,
\begin{align}
0
\leq
W_t^{(\xi)}
&\leq
S_t^{\rm SR}
\qquad
\text{for every }t\; .
\label{eq:truncated_below_full_sr_appendix}
\end{align}
Under $\mathbb P_\infty$, the likelihood ratios are independent, nonnegative, and satisfy $\mathbb E_\infty[\ell_t]=1$. Hence,
\begin{align}
\mathbb E_\infty
\left[
S_t^{\rm SR}
\right]
&=
\sum_{j=1}^{t}
\mathbb E_\infty
\left[
\prod_{k=j}^{t}\ell_k
\right]
=
t
<
\infty\; .
\label{eq:sr_integrability}
\end{align}
Moreover, the recursion $S_t^{\rm SR}=(1+S_{t-1}^{\rm SR})\ell_t$ gives
\begin{align}
\mathbb E_\infty
\left[
S_t^{\rm SR}
\,\middle|\,
\mathcal F_{t-1}
\right]
&=
1+S_{t-1}^{\rm SR}\; .
\label{eq:sr_conditional_drift}
\end{align}
Therefore, $\{S_t^{\rm SR}-t\}_{t\geq0}$ is an integrable $\{\mathcal F_t\}$-martingale.

For $n\geq1$, let
\begin{align}
\tau_n
&\triangleq
\tau\wedge n\; .
\label{eq:truncated_constant_tsr_time}
\end{align}
Since $\tau_n$ is bounded, optional sampling yields
\begin{align}
\mathbb E_\infty
\left[
S_{\tau_n}^{\rm SR}
\right]
&=
\mathbb E_\infty[\tau_n]\; .
\label{eq:sr_optional_sampling}
\end{align}
On $\{\tau\leq n\}$, we have $\tau_n=\tau$ and
\begin{align}
S_{\tau_n}^{\rm SR}
&=
S_\tau^{\rm SR}
\geq
W_\tau^{(\xi)}
\geq
A\; .
\label{eq:sr_above_boundary_on_crossing}
\end{align}
Since $S_{\tau_n}^{\rm SR}\geq0$ everywhere,
\begin{align}
S_{\tau_n}^{\rm SR}
&\geq
A\,
\mathbf 1_{\{\tau\leq n\}}\; .
\label{eq:sr_indicator_lower_bound}
\end{align}
Combining~\eqref{eq:sr_optional_sampling} and~\eqref{eq:sr_indicator_lower_bound} gives
\begin{align}
\mathbb E_\infty[\tau_n]
&\geq
A\,
\mathbb P_\infty(\tau\leq n)\; .
\label{eq:constant_tsr_stopped_bound}
\end{align}
Because $\tau_n\uparrow\tau$, monotone convergence and continuity from below imply
\begin{align}
\mathbb E_\infty[\tau]
&\geq
A\,
\mathbb P_\infty(\tau<\infty)\; .
\label{eq:constant_tsr_limit_bound}
\end{align}
If $\mathbb P_\infty(\tau<\infty)=1$, this is the desired inequality. If $\mathbb P_\infty(\tau=\infty)>0$, then $\mathbb E_\infty[\tau]=\infty$, and the conclusion again follows.
}

{\color{black}
\section{Proof of Lemma~\ref{lemma:constant_tsr_detection}}
\label{proof:lemma:constant_tsr_detection}

Write $\tau=\tau^{\rm c}_{A,\xi}$, fix $\theta\in\Theta_s(\xi)$, and fix an episode onset $\gamma_i$. Define
\begin{align}
\Lambda_{i,k}
&\triangleq
\prod_{r=0}^{k-1}\ell_{\gamma_i+r}\; ,
\qquad
k\in\{1,\ldots,\xi\}\; ,
\label{eq:true_onset_partial_product}
\\
E_i
&\triangleq
\{\tau\geq\gamma_i\}\; ,
\qquad
D_i
\triangleq
\{\gamma_i\leq\tau<\gamma_i+\xi\}\; ,
\label{eq:survival_and_success_events}
\\
B_i
&\triangleq
\left\{
\max_{1\leq k\leq\xi}\Lambda_{i,k}
\geq
A
\right\}\; .
\label{eq:true_onset_crossing_event}
\end{align}
At time $\gamma_i+k-1$, the quantity $\Lambda_{i,k}$ is one of the nonnegative summands in $W_{\gamma_i+k-1}^{(\xi)}$. Thus, if the detector has survived to $\gamma_i$ and $B_i$ occurs, it stops within the admissible window:
\begin{align}
E_i\cap B_i
&\subseteq
D_i\; .
\label{eq:crossing_implies_window_success}
\end{align}
Because $\tau$ is a stopping time, $E_i\in\mathcal F_{\gamma_i-1}$. Under every $\theta\in\Theta_s(\xi)$, the observations $X_{\gamma_i},\ldots,X_{\gamma_i+\xi-1}$ are independent of $\mathcal F_{\gamma_i-1}$ and are i.i.d. according to $F_1$. Therefore,
\begin{align}
\mathbb P_\theta
\left(
B_i
\,\middle|\,
\mathcal F_{\gamma_i-1}
\right)
&=
\bar p_\xi(A)
\qquad
\mathbb P_\theta\text{-a.s.}\; .
\label{eq:true_onset_crossing_probability}
\end{align}
Conditioning the event inclusion~\eqref{eq:crossing_implies_window_success} on $\mathcal F_{\gamma_i-1}$ gives
\begin{align}
\mathbb E_\theta
\left[
\mathbf 1_{D_i}
\,\middle|\,
\mathcal F_{\gamma_i-1}
\right]
&\geq
\mathbb E_\theta
\left[
\mathbf 1_{E_i}\mathbf 1_{B_i}
\,\middle|\,
\mathcal F_{\gamma_i-1}
\right]
\nonumber\\
&=
\mathbf 1_{E_i}
\mathbb P_\theta
\left(
B_i
\,\middle|\,
\mathcal F_{\gamma_i-1}
\right)
\nonumber\\
&=
\bar p_\xi(A)
\mathbf 1_{E_i}\; .
\label{eq:conditional_success_indicator_bound}
\end{align}
This proves~\eqref{eq:conditional_window_success_bound}.

Suppose $\mathbb P_\theta(E_i)>0$. Taking expectations in~\eqref{eq:conditional_success_indicator_bound} and using $D_i\subseteq E_i$ yields the Pollak-type bound
\begin{align}
\mathbb P_\theta(D_i\mid E_i)
&\geq
\bar p_\xi(A)\; .
\label{eq:pollak_episode_lower_bound}
\end{align}
The same inequality holds history-wise on $E_i$. Hence, taking the essential infimum over the surviving pre-onset histories yields the corresponding Lorden-type bound
\begin{align}
\operatorname*{ess\,inf}_{\mathcal F_{\gamma_i-1};E_i}
\mathbb P_\theta
\left(
D_i
\,\middle|\,
\mathcal F_{\gamma_i-1},E_i
\right)
&\geq
\bar p_\xi(A)\; .
\label{eq:lorden_episode_lower_bound}
\end{align}
Here, $\operatorname*{ess\,inf}_{\mathcal F_{\gamma_i-1};E_i}$ denotes the essential infimum over the set $E_i$.

It remains to verify positive survival under~\eqref{eq:survivability_condition}. Since $F_1\ll F_0$,
\begin{align}
\mathbb P_\infty(\ell_1\leq b)
&>
0\; .
\label{eq:nominal_survivability_probability}
\end{align}
Define
\begin{align}
H_i
&\triangleq
\bigcap_{t=1}^{\gamma_i-1}
\{\ell_t\leq b\}\; ,
\label{eq:low_evidence_history}
\end{align}
with $H_i=\Omega$ when $\gamma_i=1$. Every observation before $\gamma_i$ has distribution either $F_0$ or $F_1$. By independence and the positivity of $F_0(\ell_1\leq b)$ and $F_1(\ell_1\leq b)$,
\begin{align}
\mathbb P_\theta(H_i)
&>
0\; .
\label{eq:low_evidence_history_positive}
\end{align}
On $H_i$, for every $t<\gamma_i$,
\begin{align}
W_t^{(\xi)}
&\leq
\sum_{k=1}^{\min\{\xi,t\}}b^k
<
\frac{b}{1-b}\; .
\label{eq:low_evidence_tsr_bound}
\end{align}
If~\eqref{eq:survivability_boundary_condition} holds, then $H_i\subseteq E_i$, and therefore
\begin{align}
\mathbb P_\theta(E_i)
&\geq
\mathbb P_\theta(H_i)
>
0\; .
\label{eq:positive_episode_survival}
\end{align}
The episode-wise bounds~\eqref{eq:pollak_episode_lower_bound}--\eqref{eq:lorden_episode_lower_bound} therefore hold for every $\theta\in\Theta_s(\xi)$ and every $i\in\{1,\ldots,s\}$. Summing over $i$ and taking the infimum over $\Theta_s(\xi)$ proves~\eqref{eq:aggregate_window_success_bound}.
}
{\color{black}
\section{Proof of Theorem~\ref{thm:constant_tsr_asymptotic_optimality}}
\label{proof:thm:constant_tsr_asymptotic_optimality}

Define, once for the remainder of the proof,
\begin{align}
\bar p_\varepsilon
&\triangleq
\bar p_{\xi_\varepsilon}(\varepsilon)\; .
\label{eq:asymptotic_crossing_notation}
\end{align}

\paragraph*{Step 1: The finite-window crossing probability tends to one.}
Condition~\eqref{eq:constant_tsr_growth_condition} implies that there exists $a<I$ such that, for all sufficiently large $\varepsilon$,
\begin{align}
\frac{\log\varepsilon}{\xi_\varepsilon}
&\leq
 a
<
I\; .
\label{eq:asymptotic_margin_constant}
\end{align}
Since the maximum defining $\bar p_\varepsilon$ includes $S_{\xi_\varepsilon}$,
\begin{align}
\bar p_\varepsilon
&\geq
\mathbb P_1
\left(
S_{\xi_\varepsilon}
\geq
\log\varepsilon
\right)
\geq
\mathbb P_1
\left(
\frac{S_{\xi_\varepsilon}}{\xi_\varepsilon}
\geq
 a
\right)\; .
\label{eq:crossing_probability_slln_bound}
\end{align}
The integrability condition~\eqref{eq:asymptotic_tsr_integrability} and the strong law of large numbers give
\begin{align}
\frac{S_n}{n}
&\longrightarrow
I
\qquad
\mathbb P_1\text{-a.s.}\; .
\label{eq:postchange_slln}
\end{align}
Since $\xi_\varepsilon\to\infty$ and $a<I$, equations~\eqref{eq:crossing_probability_slln_bound}--\eqref{eq:postchange_slln} imply
\begin{align}
\bar p_\varepsilon
&\longrightarrow
1\; .
\label{eq:pebar_to_one}
\end{align}

\paragraph*{Step 2: The stopping time is proper and feasible.}
Equation~\eqref{eq:crossing_probability_slln_bound} implies that the terminal product exceeds $\varepsilon$ with positive probability under $F_1^{\otimes\xi_\varepsilon}$ for all sufficiently large $\varepsilon$. Since $F_1^{\otimes\xi_\varepsilon}\ll F_0^{\otimes\xi_\varepsilon}$,
\begin{align}
q_\varepsilon
&\triangleq
\mathbb P_\infty
\left(
\prod_{r=1}^{\xi_\varepsilon}\ell_r
\geq
\varepsilon
\right)
>
0\; .
\label{eq:nominal_block_crossing_probability}
\end{align}
For $m\geq1$, define the crossing event on the $m$th disjoint block by
\begin{align}
C_{\varepsilon,m}
&\triangleq
\left\{
\prod_{r=(m-1)\xi_\varepsilon+1}^{m\xi_\varepsilon}
\ell_r
\geq
\varepsilon
\right\}\; .
\label{eq:disjoint_block_crossing_event}
\end{align}
Under $\mathbb P_\infty$, these events are independent and have probability $q_\varepsilon$. Let
\begin{align}
G_\varepsilon
&\triangleq
\inf\{m\geq1:C_{\varepsilon,m}\text{ occurs}\}\; .
\label{eq:first_crossing_block}
\end{align}
Then $G_\varepsilon$ is geometric with parameter $q_\varepsilon$. If $C_{\varepsilon,m}$ occurs, the complete product over that block is a summand of $W_{m\xi_\varepsilon}^{(\xi_\varepsilon)}$, and hence
\begin{align}
\tau_\varepsilon^{\rm c}
&\leq
\xi_\varepsilon G_\varepsilon\; .
\label{eq:constant_tsr_geometric_domination}
\end{align}
Consequently,
\begin{align}
\mathbb P_\infty
\left(
\tau_\varepsilon^{\rm c}<\infty
\right)
&=
1\; ,
\qquad
\mathbb E_\infty
\left[
\tau_\varepsilon^{\rm c}
\right]
\leq
\frac{\xi_\varepsilon}{q_\varepsilon}
<
\infty\; .
\label{eq:constant_tsr_proper_upper_bound}
\end{align}
Lemma~\ref{lemma:constant_tsr_arl} supplies the matching feasibility inequality, so
\begin{align}
\varepsilon
&\leq
\mathbb E_\infty
\left[
\tau_\varepsilon^{\rm c}
\right]
<
\infty\; .
\label{eq:arl_bounds_theorem6_proof}
\end{align}
This proves~\eqref{eq:constant_tsr_proper_feasible}.

\paragraph*{Step 3: Minimax convergence and optimality.}
Since $\varepsilon\to\infty$, condition~\eqref{eq:survivability_boundary_condition} holds with $A=\varepsilon$ for all sufficiently large $\varepsilon$. Lemma~\ref{lemma:constant_tsr_detection} therefore gives
\begin{align}
\mathcal L_Q
\left(
\tau_\varepsilon^{\rm c}
\right)
&\geq
s\,\bar p_\varepsilon\; ,
\qquad
Q\in\{{\rm L},{\rm P}\}\; .
\label{eq:minimax_lower_theorem6}
\end{align}
By feasibility and~\eqref{eq:trivial_minimax_upper_bound},
\begin{align}
s\,\bar p_\varepsilon
&\leq
\mathcal L_Q
\left(
\tau_\varepsilon^{\rm c}
\right)
\leq
\mathsf V_Q^\star
\left(
\varepsilon,\xi_\varepsilon;s
\right)
\leq
s\; .
\label{eq:squeeze_theorem6}
\end{align}
Together with~\eqref{eq:pebar_to_one}, this proves~\eqref{eq:constant_tsr_score_to_s}. It also gives the additive bound
\begin{align}
0
&\leq
\mathsf V_Q^\star
\left(
\varepsilon,\xi_\varepsilon;s
\right)
-
\mathcal L_Q
\left(
\tau_\varepsilon^{\rm c}
\right)
\leq
s(1-\bar p_\varepsilon)
\longrightarrow
0\; ,
\label{eq:additive_gap_theorem6_proof}
\end{align}
which proves~\eqref{eq:constant_tsr_additive_optimality}. Finally, $\bar p_\varepsilon>0$ for all sufficiently large $\varepsilon$, and~\eqref{eq:squeeze_theorem6} implies
\begin{align}
1
&\geq
\frac{
\mathcal L_Q
\left(
\tau_\varepsilon^{\rm c}
\right)
}{
\mathsf V_Q^\star
\left(
\varepsilon,\xi_\varepsilon;s
\right)
}
\geq
\bar p_\varepsilon
\longrightarrow
1\; .
\label{eq:relative_gap_theorem6_proof}
\end{align}
This proves~\eqref{eq:constant_tsr_relative_optimality} and completes the proof.
}

{\color{black}
\section{Proof of Corollary~\ref{cor:window_above_information_scale}}
\label{proof:cor:window_above_information_scale}

For the window sequence in~\eqref{eq:explicit_information_scaling},
\begin{align}
\limsup_{\varepsilon\to\infty}
\frac{\log\varepsilon}{\xi_\varepsilon}
&\leq
\frac{I}{1+\delta}
<
I\; .
\label{eq:corollary_growth_verification}
\end{align}
Therefore, the conclusions for $\tau^{\rm c}_{\varepsilon,\xi_\varepsilon}$ follow directly from Theorem~\ref{thm:constant_tsr_asymptotic_optimality}.

Now consider finite thresholds $\widehat A_\varepsilon$ satisfying~\eqref{eq:exactly_calibrated_constant_boundary}. Lemma~\ref{lemma:constant_tsr_arl} gives
\begin{align}
\widehat A_\varepsilon
&\leq
\varepsilon\; .
\label{eq:calibrated_boundary_upper_bound}
\end{align}
We next show that $\widehat A_\varepsilon\to\infty$. Suppose, to the contrary, that along a subsequence $\widehat A_\varepsilon\leq M$ for some finite $M>0$. Since $S_n/n\to I>0$ under $\mathbb P_1$, there exists a finite $n$ such that
\begin{align}
\mathbb P_1
\left(
\prod_{r=1}^{n}\ell(Z_r)
\geq
M
\right)
&>
0\; .
\label{eq:fixed_block_postchange_crossing}
\end{align}
Absolute continuity implies
\begin{align}
q_M
&\triangleq
\mathbb P_\infty
\left(
\prod_{r=1}^{n}\ell_r
\geq
M
\right)
>
0\; .
\label{eq:fixed_block_nominal_crossing}
\end{align}
For all sufficiently large $\varepsilon$, $\xi_\varepsilon\geq n$. Applying the event in~\eqref{eq:fixed_block_nominal_crossing} to successive independent $n$-blocks shows that
\begin{align}
\mathbb E_\infty
\left[
\tau^{\rm c}_{\widehat A_\varepsilon,\xi_\varepsilon}
\right]
&\leq
\frac{n}{q_M}
<
\infty\; .
\label{eq:calibrated_boundary_contradiction_bound}
\end{align}
This contradicts~\eqref{eq:exactly_calibrated_constant_boundary} as $\varepsilon\to\infty$. Hence,
\begin{align}
\widehat A_\varepsilon
&\longrightarrow
\infty\; .
\label{eq:calibrated_boundary_diverges}
\end{align}
In particular,~\eqref{eq:survivability_boundary_condition} holds with $A=\widehat A_\varepsilon$ for all sufficiently large $\varepsilon$. Moreover, by~\eqref{eq:calibrated_boundary_upper_bound} and monotonicity in the boundary,
\begin{align}
\bar p_{\xi_\varepsilon}(\widehat A_\varepsilon)
&\geq
\bar p_{\xi_\varepsilon}(\varepsilon)
\longrightarrow
1\; .
\label{eq:calibrated_crossing_probability}
\end{align}
The exact calibration gives feasibility and finite mean. Applying Lemma~\ref{lemma:constant_tsr_detection} and the same sandwich argument as in~\eqref{eq:squeeze_theorem6} proves all remaining conclusions.
}

\bibliographystyle{IEEEtran}
\bibliography{Consolidated}

\end{document}